\documentclass[11pt]{amsart}
\usepackage{geometry}                
\geometry{letterpaper}                   
\usepackage{graphicx}
\usepackage{amssymb}
\usepackage{epstopdf}
\usepackage{enumerate}
\DeclareGraphicsRule{.tif}{png}{.png}{`convert #1 `dirname #1`/`basename #1 .tif`.png}

\newtheorem{theorem}{Theorem}
\newtheorem{remark}{Remark}
\newtheorem{definition}{Definition}
\newtheorem{lemma}{Lemma}

\newtheorem{corollary}{Corollary}
\newtheorem{assumption}{Assumption}

\newcommand{\be}[1]{\begin{equation} \label{#1}}
\newcommand{\ee}{\end{equation}}

\title[Microlocal analysis of the geodesic X-ray transform]{On the microlocal analysis of the geodesic X-ray transform with conjugate points}
\author{Sean Holman, Gunther Uhlmann}

\begin{document}

\begin{abstract}
We study the microlocal properties of the geodesic X-ray transform $\mathcal{X}$ on a manifold with boundary allowing the presence of conjugate points. Assuming that there are no self-intersecting geodesics and all conjugate pairs are nonsingular we show that the normal operator $\mathcal{N} = \mathcal{X}^t \circ \mathcal{X}$ can be decomposed as the sum of a pseudodifferential operator of order $-1$ and a sum of Fourier integral operators. We also apply this decomposition to prove inversion of $\mathcal{X}$ is only mildly ill-posed in dimension three or higher.
\end{abstract}

\maketitle

\section{Introduction}

Our object of study in this paper will be the geodesic X-ray transform defined on a Riemannian manifold $(M,g)$ with boundary. Loosely speaking, the geodesic X-ray transform, which we denote in the present work by $\mathcal{X}$, is the operator which takes a function defined on $M$ to its integrals along all geodesics of $(M,g)$. The Euclidean version of this problem provides the mathematical basis for X-ray computerised tomography, and has a long history going back at least to the well known work of Radon \cite{Radon17}. Slightly before Radon, Funk also found an inversion formula for the case of symmetric functions on the two-sphere \cite{Funk16}. Study of the general non-Euclidean case began in earnest, to the author's knowledge, with Mukhometov \cite{Mukhometov77} in relation to the boundary rigidity problem which is the problem of determining a Riemannian metric from knowledge of its distance function restricted to the boundary. The boundary rigidity problem arises in seismology when we take the so-called ``travel-time metric" for seismic waves, and consider that we can measure the amount of time it takes for a seismic wave to travel from a source to a receiver location. Indeed, the geodesic X-ray transform of a tensor field arises as a linearization of travel time tomography \cite{Uhlmann01}. Other applications of the geodesic X-ray transform include ultrasound transmission tomography \cite{Opielinski13} and optical tomography with a variable index of refraction \cite{McDowall09}. It has also been shown that injectivity of the geodesic X-ray transform can imply identifiability results for the anisotropic Calder\'on's problem in some special cases \cite{FerreiraKenigSaloUhlmann_weight}.

Most existing results on the geodesic X-ray transform for manifolds with boundary concern simple manifolds. A simple manifold $(M,g)$ is one for which the exponential map centered at any point is a diffeomorphism onto $M$, and $\partial M$ is strictly convex. Using the energy integral method originally introduced by Mukhometov \cite{Mukhometov77}, it can be shown that the geodesic X-ray transform is injective, and stability estimates for the inversion can be obtained, under hypotheses on the sectional curvature which imply that $(M,g)$ is simple (see \cite{Sharafutdinov_book}). It has also been known for some time that the normal operator
\[
\mathcal{N} = \mathcal{X}^t \circ \mathcal{X}
\]
is an elliptic pseudodifferential operator of order $-1$ when $(M,g)$ is a simple manifold \cite{GuilleminSternberg_integral}. The transpose $\mathcal{X}^t$ must be defined by placing an appropriate measure on the space of geodesics. An explicit Fredholm type formula which can be inverted by a Neumann series in the case of a simple manifold was found in the two dimensional case in \cite{PeUh}. The geodesic X-ray transform acting on tensor fields has also been studied extensively almost completely in the case of simple manifolds. See \cite{PaSaUh_survey} and the references therein for a survey of recent progress, and also a listing of current open problems. See \cite{PaSaUh} for the most general existing results on tensor tomography for simple surfaces.

There are few results for non-simple manifolds and most of those that exist are relatively recent. By applying analytic microlocal analysis injectivity of $\mathcal{X}$ can be proven in some non-simple cases \cite{FrigyikStefanovUhlmann_generic,StefanovUhlmann_nonsimple}. One of the most general results to date for dimension three or higher was established using the scattering calculus and a layer stripping argument \cite{UhlmannVasy12}, and applies in some non-simple cases. It is difficult however to connect the hypotheses in \cite{UhlmannVasy12} to other more geometric assumptions on $(M,g)$. Recent work using tools from dynamical systems also shows that $\mathcal{X}$, possibly acting on tensors, is injective for manifolds with hyperbolic trapped sets, although not including any conjugate points \cite{Gu}. Similar results for the transform of a connection are shown in \cite{GuPaSaUh}. In the two dimensional case injectivity of the geodesic X-ray transform was established for some non-simple cases including conjugate points in \cite{Sharafutdinov_layers}, although this injectivity should be held in contrast to other recent work showing that when there are conjugate points present in two dimensions it is not possible to establish a stability estimate for inversion of the geodesic X-ray transform between any Sobolev spaces \cite{MoStUh}. Some results for the tensor problem on a non-simple manifold have also been found in \cite{Da}.

In general, the normal operator $\mathcal{N}$ is not a pseudodifferential operator when $(M,g)$ is not simple. The authors of \cite{StefanovUhlmann12} show that in the case of fold caustics an appropriately localised version of the normal operator is the sum of a pseudodifferential operator and a Fourier integral operator. This result is very much in the same spirit as the results of the current paper, although here we lessen the restriction to fold caustics. In \cite{MoStUh} the restriction to fold caustics is also removed although only in the case of two dimensions. In fact the method of \cite{MoStUh} is similar to the current paper, but here we have analysed the geometry of conjugate points in more detail in order to reach a more general conclusion.

This paper contains two main results. The first is Theorem~\ref{mainthm} which shows that when there are no singular conjugate points the normal operator $\mathcal{N}$ can be decomposed as an elliptic pseudodifferential operator of order $-1$ plus a sum of Fourier integral operators (FIOs). Each FIO corresponds with conjugate pairs of a given order, and the order of the FIO depends on the dimension of $M$ and the order of the conjugate points. The canonical relation of each FIO depends on the geometry of the conjugate pairs. The theorem is actually a bit more general than this, allowing for a weight to be included in $\mathcal{X}$. The second result is Theorem~\ref{contthm} which concerns the use of the decomposition in Theorem~\ref{mainthm} to obtain stability estimates for inversion of $\mathcal{X}$. While results similar to Theorem~\ref{contthm} have been shown before we are not aware of this exact form which we have included because it matches with hypotheses in previous literature on the the convergence of Tikhonov regularisation. This is described in more detail in a remark after the statement of the theorem.

\section{Preliminaries}

In this section we introduce the notation which will be used throughout the paper, a few important definitions, and some preliminary lemmas we will use later. Throughout the paper we make the following assumption on $(M,g)$.

\begin{assumption} \label{basicassume}
$(M, g)$ is an $n$ dimensional compact, Riemannian manifold with smooth strictly convex boundary and with $n \geq 2$. Further assume that $(M,g)$ does not contain any self-intersecting geodesics.
\end{assumption}

\noindent The requirement that $(M,g)$ does not contain self-intersecting geodesics is stronger than the requirement that $(M,g)$ be non-trapping (i.e. not contain any closed geodesics). The prohibition against self-intersecting geodesics arises for technical reasons in the application of the calculus of FIOs, and may be able to be lessened to simply non-trapping via an additional layer of microlocalization, but we have not done this.

The interior of $M$ will be $M^{int} = M \setminus \partial M$. The unit sphere bundle for $M$ (resp. $M^{int}$) will be $S M$ (resp. $S M^{int}$). We will use $\pi$ for the projection mapping $\pi: SM \rightarrow M$, and $i_{SM}$ for the inclusion map $i_{SM}: SM \rightarrow TM$. Further, we will use the same notation for $\pi |_{S M^{int}}$ and $i_{SM} |_{SM^{int}}$ (this should not cause any confusion). We will also have occasion to require other bundle projection mappings, and these will always be denoted by $\pi$ with different subscripts indicating the bundle. For example, for the projection from $T^* S M^{int}$ (this is the cotangent bundle of $S M^{int}$) to $S M^{int}$ we will write $\pi_{T^* S M^{int}}$.

In referring to points of vector bundles a timeless question is whether the notation should include the base point. For example, when indicating points in the tangent bundle $T M$ should one use $(x,v) \in T M$ or just $v \in T M$? We do not fully commit ourselves to either choice, but stipulate that when we write $(x,v) \in T M$ this is actually a short hand for $v \in T M$ and $x = \pi_{T M}(v)$. The same holds for points in other vector bundles. 

At the boundary $\partial M$, the outward pointing unit normal vector will be $\nu_+$, and the set of inward or outward pointing unit vectors will be
\[
\partial_\pm S M = \{ (x,v) \in T M \ : \ x \in \partial M, \ \pm \langle v, \nu_+ \rangle > 0 \}.
\]
Note that $\partial_- S M$ is the set of inward pointing vectors, and $\partial_+ S M$ is the set of outward pointing vectors.

For $v \in T M$ we write $\gamma_v$ for the maximally extended geodesic with initial data $\dot \gamma_v(0) = v$. The reader should specifically note here that we consider $\dot \gamma_v(0)$ to be a point in $T M$, and so, loosely speaking, this initial condition is including both the initial position and tangent vector of the geodesic.

We define two functions $\tau_\pm$ initially on $S M^{int}$ by the formulas
\[
\gamma_{v}(\tau_\pm(v)) \in \partial M,
\]
$\tau_+(v) > 0$, and $\tau_-(v)<0$. These give the backward and forward ``time to the boundary." Since the boundary is strictly convex $\tau_\pm \in C^\infty(S M^{int})$, although $\tau_\pm$ are not smooth up to the boundary \cite{Sharafutdinov_book}. Nonetheless $\tau_\pm$ extend to smooth functions on $S M \setminus T \partial M$, and thus we have $\tau_\pm \in C^\infty(S M \setminus T \partial M)$.

We will make much use of the smooth mappings  $F: S M^{int} \rightarrow \partial_- S M$ defined by
\[
F(v) = \dot \gamma_{v}(\tau_-(v)),
\]
and $\Psi: \mathcal{D} = \{ (v, s) \in S M^{int} \times \mathbb{R} \ : \tau_-(v) < s < \tau_+(v) \} \rightarrow S M^{int}$ defined by
\be{Fdef}
\Psi(v,s) = \dot{\gamma}_v(s).
\ee
The mapping $\Psi$ is the geodesic flow in the unit sphere bundle, and $F$ takes $v \in S M^{int}$ to the unique $w \in \partial_- S M$ such that the tangent vector to the geodesic with initial data $w$ equals $v$ at some point along the geodesic. We will also write $\tilde{\Psi}$ for the geodesic flow on $TM$.

On $M$ we have the Riemannian density which we write as $| \mathrm{d} v_g |$. Further, $g$ induces Liouville densities on $S M$ and $\partial S M$ which we will write as $|\mathrm{d} S M |$ and $|\mathrm{d} \partial S M|$ respectively. With these notations the Santal\'o formula (see \cite{Sharafutdinov_book}) is
\begin{equation}\label{Sant}
\int_{S M} h(v) \ |\mathrm{d} S M (v)| = - \int_{\partial_- S M}  \int_0^{\tau_+(w)} h(\dot \gamma_w(s)) \ \langle w, \nu_+ \rangle \ \mathrm{d} s \ \mathrm | \mathrm{d} \partial S M(w) |
\end{equation}
which holds for $h \in C(SM)$. In view of this formula we use $|\mathrm{d} \mu | = - \langle w, \nu_+ \rangle \ \mathrm | \mathrm{d} \partial S M |$ as the density on $\partial_- S M$.

When dealing with FIOs we need to use half densities, and so we will write $\Omega^{1/2}_M$ for the half density bundle on $M$. The same subscript notation will be used for half densities on other manifolds such as $\Omega^{1/2}_{\partial_- SM}$ for the half densities on $\partial_- SM$. We will also consider function spaces of sections of half-densities writing for example $C_c^\infty(\Omega_{M^{int}}^{1/2})$ for the space of sections of $\Omega_M^{1/2}$ which are smooth with compact support contained inside of $M^{int}$. We now define the main object of our study, the geodesic X-ray transform, acting on half densities.
\begin{definition}
{\bf (Geodesic X-ray transform on half densities)}
The geodesic X-ray transform on $(M, g)$ is defined for $f \in C_c^\infty(\Omega^{1/2}_{M^{int}})$ as a half density on $\partial_- S M$ by the formula
\[
\mathcal{X}[f](v) = \left ( \int_{0}^{\tau_+(v)} \frac{f}{|\mathrm{d} v_g|^{1/2}}(\gamma_v(s)) \ \mathrm{d} s \right ) |\mathrm{d} \mu(v)|^{1/2}.
\]
\end{definition}
\noindent It can be shown that $\mathcal{X}: C_c^\infty(\Omega^{1/2}_{M^{int}}) \rightarrow C_c^\infty(\Omega^{1/2}_{\partial_- S M})$ continuously. In fact $\mathcal{X}$ is an FIO and can be extended to distributions $\mathcal{E}'(\Omega^{1/2}_{M^{int}})$ and various other spaces (including $L^2(\Omega^{1/2}_{M^{int}})$).

Central tools for our analysis will be the push-forward and pull-back of half densities which we now define.
\begin{definition} \label{pushpull_def}
{\bf (Push-forward and pull-back of half densities)} Let $X$ and $Y$ be two manifolds of dimensions $n_X$ and $n_Y$, and let $G: X \rightarrow Y$ be a smooth submersion. Also, suppose we are given half densities $|\mathrm{d} \mu_X|^{1/2}$ and $|\mathrm{d} \mu_Y|^{1/2}$ on $X$ and $Y$. The push-forward $G_*: C_c^\infty(X,\Omega^{1/2}_X) \rightarrow C_c^\infty(\Omega^{1/2}_Y)$ and pull-back $G^*: C_c^\infty(\Omega^{1/2}_Y) \rightarrow C^\infty(\Omega^{1/2}_X)$ are defined by the requirement that
\[
\int_Y h(y)\ G_*[f](y) = \int_X \frac{h}{|\mathrm{d} \mu_Y|^{1/2}}(G(x))\ |\mathrm{d} \mu_X|^{1/2} \ f(x) = \int_X G^*[h](x) \ f(x)
\]
for all $f \in C_c^\infty(\Omega^{1/2}_X)$ and $h \in C_c^\infty(\Omega^{1/2}_Y)$.
\end{definition}
\noindent It can be seen directly from this definition that the pull-back and push-forward are adjoints of one another. Intuitively the pull-back is the precomposition with $G$, while the push-forward is an integration over the level sets of $G$.

Indeed, let us consider the push-forward and pull-back by the map $F: SM^{int} \rightarrow \partial_- SM$ introduced above using the half-densities corresponding to the densities $|\mathrm{d} SM|$ and $|\mathrm{d} \mu|$ also introduced above. Using Santal\'o's formula \eqref{Sant} we find that
\[
\begin{split}
\int_{\partial_- SM} h(w) \ F_*[f](w) & = \int_{SM} \frac{h}{|\mathrm{d} \mu|^{1/2}}(F(v))\ |\mathrm{d} SM(v)|^{1/2}\ f(v) \\
& = \int_{SM} \frac{h}{|\mathrm{d} \mu|^{1/2}}(F(v))\ \frac{f}{|\mathrm{d} SM|^{1/2}}(v)\ |\mathrm{d} SM(v)| \\
& = \int_{\partial_- SM}\left ( \int_0^{\tau_+(w)} \frac{f}{|\mathrm{d} SM|^{1/2}}(\dot \gamma_w(s))\ \mathrm{d} s \right ) \frac{h}{|\mathrm{d} \mu|^{1/2}}(w) \ |\mathrm{d} \mu(w)| \\
& =  \int_{\partial_- SM}\left ( \int_0^{\tau_+(w)} \frac{f}{|\mathrm{d} SM|^{1/2}}(\dot \gamma_w(s))\ \mathrm{d} s \right ) |\mathrm{d} \mu(w)|^{1/2}\ h(w)
\end{split}
\]
From this we have explicit formulae
\be{F_*}
F_*[f](w) = \left ( \int_0^{\tau_+(w)} \frac{f}{|\mathrm{d} SM|^{1/2}}(\dot \gamma_w(s))\ \mathrm{d} s \right ) |\mathrm{d} \mu(w)|^{1/2}
\ee
and
\be{F^*}
F^*[h](v) = \frac{h}{|\mathrm{d} \mu|^{1/2}}(F(v))\ |\mathrm{d} SM(v)|^{1/2}.
\ee
Similar analysis, not requiring the Santal\'o formula, may be applied to $\pi:SM^{int} \rightarrow M$ to find that
\[
\pi_*[f](x) =  \left ( \int_{S_x M} \frac{f}{|\mathrm{d} SM|^{1/2}}(v)\ |\mathrm{d} S_xM(v)| \right ) |\mathrm{d} v_g(x)|^{1/2}
\]
and
\[
\pi^*[h](v) = \frac{h}{|\mathrm{d} v_g|^{1/2}}(\pi(v))\ |\mathrm{d} SM(v)|^{1/2}.
\]
Using these formulae and the observation above that the adjoint of a pull-back is a push-forward, we find that the geodesic X-ray transform and its adjoint may be written as
\[
\mathcal{X} = F_* \circ \pi^* \quad \mbox{and} \quad \mathcal{X}^t = \pi_* \circ F^*.
\]
The normal operator is thus
\[
\mathcal{N} = \pi_* \circ F^* \circ F_* \circ \pi^*.
\]
This will be the jumping off point for our analysis of $\mathcal{N}$ as an FIO.

We actually generalise slightly by adding a weight. Given $\phi \in C^\infty(S M)$ we will write $\phi^m$ for the operator which multiplies by $\phi$, and define
\[
\mathcal{X}_\phi = F_* \circ \phi^m \circ \pi^*
\]
This is the weighted geodesic X-ray transform, and since it requires little extra effort to prove our results for $\mathcal{X}_\phi$ we will do so.

We will now show that it is generally true that the push-forward and pull-back by a submersion are both FIOs. This is certainly a known result going back to at least \cite{GuSc}, but we provide it here for completeness, and so that we have it in precisely the form required for the rest of the current paper.

\begin{lemma} \label{pushpull_lemm}
Suppose we are in the setting of definition \ref{pushpull_def}. Then the pull-back $G_*$ and the push-forward $G^*$ are both FIOs of order $(n_Y-n_X)/4$ with non-vanishing principal symbol. The canonical relation of $G_*$ is
\[
C_{G_*} = \left \{ (\xi, \ DG|_{x}^t\ \xi) \ : \ x \in X, \ \xi \in T^*_{G(x)} Y \setminus \{0\} \right \}
\]
while the canonical relation of $C_{G^*}$ is 
\[
C_{G^*} = \left \{ (DG|_{x}^t\ \xi, \xi) \ : \ x \in X, \ \xi \in T^*_{G(x)} Y \setminus \{0\} \right \}.
\]
\end{lemma}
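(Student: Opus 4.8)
The plan is to compute the Schwartz kernels of $G_*$ and $G^*$ directly in adapted coordinates, recognise them as conormal distributions along the graph of $G$, and then read off the order, the principal symbol, and the canonical relation. Since $G_*$ and $G^*$ are adjoints (as observed after Definition~\ref{pushpull_def}), and the transpose of an FIO of order $\mu$ with canonical relation $C$ is an FIO of order $\mu$ with canonical relation $C^{-1}$, it suffices to analyse the pull-back $G^*$; one then only has to observe that swapping the two factors in the relation $C_{G^*}$ of the statement produces the relation $C_{G_*}$ of the statement, which is immediate from the two displayed formulas.

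First I would use the normal form for submersions: near any point of $X$ there are coordinates $x = (x', x'') \in \mathbb{R}^{n_Y} \times \mathbb{R}^{n_X - n_Y}$ and coordinates $y$ on $Y$ in which $G(x', x'') = x'$. Writing the given half-densities in these charts as $|\mathrm{d}\mu_X|^{1/2} = a(x)\,|\mathrm{d}x|^{1/2}$ and $|\mathrm{d}\mu_Y|^{1/2} = b(y)\,|\mathrm{d}y|^{1/2}$ with $a, b$ smooth and positive, the defining identity in Definition~\ref{pushpull_def} shows that, for $h = \tilde h(y)\,|\mathrm{d}y|^{1/2}$,
\[
G^*[h](x) = \frac{a(x)}{b(x')}\,\tilde h(x')\,|\mathrm{d}x|^{1/2},
\]
so that the Schwartz kernel of $G^*$, as a half-density on $X \times Y$, is locally
\[
K_{G^*}(x, y) = \frac{a(x)}{b(x')}\,\delta(y - x')\;|\mathrm{d}x|^{1/2}\,|\mathrm{d}y|^{1/2}.
\]
This is the $\delta$-distribution along the graph $\Gamma = \{(x, G(x)) : x \in X\} \subset X \times Y$, multiplied by the smooth nowhere-vanishing factor $a(x)/b(x')$. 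Multiplication by such a factor preserves the conormal class and multiplies the principal symbol by its restriction to $\Gamma$, so $K_{G^*}$ is a conormal distribution associated with the conormal bundle $N^* \Gamma$ with nonvanishing principal symbol; the local formulas patch together under a partition of unity because the coordinate transition factors for the half-densities are smooth and nonvanishing.

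The remaining work is bookkeeping. The submanifold $\Gamma$ has codimension $n_Y$ in the $(n_X + n_Y)$-dimensional manifold $X \times Y$, so $K_{G^*}$ has Lagrangian order $n_Y/2 - (n_X + n_Y)/4 = (n_Y - n_X)/4$; with the standard normalisation (for which a pseudodifferential operator of order $m$, whose Schwartz kernel is conormal to the diagonal, is an FIO of order $m$) this says $G^*$ is an FIO of order $(n_Y - n_X)/4$. For the canonical relation, $T_{(x, G(x))}\Gamma = \{(v, DG|_x v) : v \in T_x X\}$, whose annihilator is $\{(-DG|_x^t \xi, \xi) : \xi \in T^*_{G(x)} Y\}$, so
\[
N^*\Gamma = \{\, ((x, G(x)),\, (-DG|_x^t \xi, \xi)) \ : \ x \in X, \ \xi \in T^*_{G(x)} Y \,\}.
\]
Applying the usual sign twist on the $Y$-cotangent factor converts this into $C_{G^*} = \{(DG|_x^t \xi, \xi) : x \in X, \ \xi \in T^*_{G(x)} Y\}$, exactly as claimed. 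Finally, because $G$ is a submersion $DG|_x$ is surjective, hence $DG|_x^t$ is injective, so $\xi \neq 0$ forces $DG|_x^t \xi \neq 0$; thus $C_{G^*}$ avoids the zero section in both factors, which is precisely what is needed for it to be a genuine homogeneous canonical relation.

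I do not expect a serious obstacle here: the statement is classical (which is why it is recorded only for completeness; see \cite{GuSc}) and the argument is a coordinate computation together with the elementary theory of conormal distributions. The two points that deserve a little care are the order normalisation, which I would fix explicitly at the start so the arithmetic above is unambiguous, and the verification that multiplication by the nowhere-vanishing half-density factor keeps the kernel inside the conormal class with nonvanishing principal symbol — both routine.
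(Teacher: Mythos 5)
Your proposal is correct and takes essentially the same route as the paper: both compute the Schwartz kernel in local coordinates and recognise it as a nonvanishing smooth multiple of the delta distribution conormal to the graph of $G$, from which the order $(n_Y-n_X)/4$, the twisted conormal bundle $N^*\Gamma$ as canonical relation, and the nonvanishing of the principal symbol are read off. The only cosmetic differences are that you analyse $G^*$ and obtain $G_*$ by transposition, and that you invoke the submersion normal form together with standard conormal-distribution bookkeeping where the paper instead writes the oscillatory-integral representation with phase $\xi\cdot(\tilde{G}(x)-y)$ explicitly via Fourier inversion.
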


\begin{proof}
Let $K_{G_*}$ be the Schwarz kernel of $G_*$. From definition \ref{pushpull_def}, for any $f \in C_c^\infty(\Omega^{1/2}_X)$ and $h \in C_c^\infty( \Omega^{1/2}_Y)$ we have
\[
\langle K_{\mathcal{X}_{S M}}, h \otimes f \rangle = \int_X \frac{h}{|\mathrm{d} \mu_Y|^{1/2}}(G(x))\ \frac{f}{|\mathrm{d} \mu_X|^{1/2}}(x) \ |\mathrm{d} \mu_X(x)| 
\]
Suppose now that the support of $f$ and the support of $h$ are contained in the domains of single coordinate charts $\phi: X \rightarrow \mathbb{R}^{n_X}$ and $\tilde{\phi}: Y \rightarrow \mathbb{R}^{n_Y}$ respectively, that
\[
\tilde{G} = \tilde{\phi} \circ G \circ \phi^{-1}: \mathbb{R}^{n_X} \rightarrow \mathbb{R}^{n_Y}
\]
is the representation of $G$ in these coordinates, that
\[
J(x)\ |\mathrm{d} x|  = \phi_* (| \mathrm{d} \mu_X | )
\]
is the representation of the relevant density in the coordinates on $X$, and that
\[
\tilde{f}(x) = \frac{f}{|\mathrm{d} \mu_X|^{1/2}}( \phi^{-1}(x)), \quad \tilde{h}(y) = \frac{h}{|\mathrm{d} \mu_Y|^{1/2}}(\tilde{\phi}^{-1}(y)).
\]
Then using the Fourier inversion formula
\be{XSOkernel}
\begin{split}
\langle K_{\mathcal{X}_{S M}}, h \otimes f \rangle &= \int_{\mathbb{R}^{n_X}} \tilde{h}(\tilde{G}(x)) \ \tilde{f}(x) \ J(x) \ \mathrm{d} x\\
& = \frac{1}{(2 \pi)^{n_Y}}\int_{\mathbb{R}_y^{n_Y} \times \mathbb{R}_\xi^{n_Y} \times \mathbb{R}_x^{n_X}} e^{i \xi \cdot (\tilde{G}(x)-y)} \tilde{h}(y) \ \tilde{f}(x) \ J(x)\ \mathrm{d} y \ \mathrm{d} \xi\ \mathrm{d} x.
\end{split}
\ee
Untwining the definitions we can see that this is a local representation of the Schwartz kernel of an FIO of order $(n_Y-n_X)/4$. From the phase of the local representation \eqref{XSOkernel} we can further see that the claimed canonical relations are correct. Note that since $DG|_{v}$ is a surjective map at every $v$, $DG|_v^t$ is injective and so in fact $C_{G^*}$ and $C_{G_*}$ are canonical relations. Finally, from the local representation \eqref{XSOkernel} we see that the principal symbol is equal to $J(x)$ multiplied by an appropriate non-vanishing half density, and so the principal symbol also does not vanish.
\end{proof}
\noindent Since the dimensions of $M$, $\partial_- SM$ and $SM$ are $n$, $2n-2$, and $2n-1$ respectively, Lemma \ref{pushpull_lemm} shows in particular that
\[
F_* \in \mathcal{I}^{-1/4}(\partial_- SM \times SM,\ C'_{F_*}; \ \Omega^{1/2}_{\partial_- SM \times SM}), \ F^* \in \mathcal{I}^{-1/4}(SM \times \partial_- SM,\ C'_{F^*}; \ \Omega^{1/2}_{\partial_- SM \times SM}),
\]
\[
\pi_* \in \mathcal{I}^{(1-n)/4}(M \times SM, \ C'_{\pi_*}; \ \Omega^{1/2}_{M \times SM} ), \  \mbox{and} \ \pi^* \in \mathcal{I}^{(1-n)/4} (SM \times M, \ C'_{\pi^*}; \ \Omega^{1/2}_{SM \times M}).
\]
The bulk of the remaining analysis on the normal operator $\mathcal{N}$ will be to show how the clean composition calculus may be applied to analyse the compositions of these operators. First however, we review some more preliminary material.

The canonical symplectic form on $T^* M$ will be denoted $\omega$, and the corresponding one on $T M$ induced by $g$ denoted by $\omega_g$. Specifically, if $\flat_g: T M^{int} \rightarrow T^* M^{int}$ is the musical isomorphism given by $g$ then $\omega_g$ is defined by
\[
\omega_g(X, Y) = \omega(D \flat_g X, D \flat_g Y)
\]
for any $X$ and $Y$ in $T(T M^{int})$. This symplectic form then induces an isomorphism $\flat_{\omega_g}$ from $T(T M^{int})$ to $T^*(T M^{int})$ at each point defined by
\[
[\flat_{\omega_g}(X)](Y) = \omega_g(X,Y) := \mathfrak{i}_X \omega_g(Y)
\]
for all $X$ and $Y$ in $T(T M^{int})$ (here $\mathfrak{i}_X$ is interior multiplication by $X$ which is defined by the previous formula). We denote the inverse of this map in the usual way by $\sharp_{\omega_g}$.

The geodesic flow on $T M$ is given by the Hamiltonian $H(v) =1/2 (\| v \|_g^2 - 1) \in C^\infty(T M)$. Thus we have for $v \in T M^{int}$ the following invariant formula for $\ddot \gamma_v(0) \in T (T M^{int})$
\[
\ddot \gamma_v(0) = X^{\omega_g}_H(v)
\]
where $X^{\omega_g}_H(v)$ is the Hamiltonian vector field given by $H$ with respect to the symplectic form $\omega_g$ evaluated at $v$. Since $S M$ is a level surface for $H$ in fact $X^{\omega_g}_H(v) \in T S M^{int}$ and we see that $\ddot \gamma_v(0)$ defines a smooth vector field on $S M^{int}$. We record these observations as a lemma which will be useful later.

\begin{lemma} \label{Xsmooth}
The smooth vector field $X^{\omega_g}_H$ on $TM$ restricts to a smooth vector field on $S M^{int}$ and for each $v \in S M^{int}$
\[
\ddot \gamma_v(0) = X^{\omega_g}_H(v).
\]
\end{lemma}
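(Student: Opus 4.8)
The plan is to identify the Hamiltonian flow of $H$ with respect to $\omega_g$ with the geodesic flow $\tilde\Psi$ on $TM$, by transporting the classical cogeodesic picture on $T^*M$ through the musical isomorphism, and then to read the restriction statement off the fact that $X^{\omega_g}_H$ is tangent to the level sets of $H$.

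First I would note that, directly from the definition of $\omega_g$ given above, the map $\flat_g : TM^{int} \to T^*M^{int}$ satisfies $\omega_g = \flat_g^{*}\omega$, i.e.\ it is a symplectomorphism onto its image. Since $\flat_g$ is also a fibrewise isometry we have $\|v\|_g = \|\flat_g v\|_g$, and hence $H = \tilde H \circ \flat_g$, where $\tilde H(\xi) = \tfrac12(\|\xi\|_g^2 - 1)$ is the Hamiltonian generating the cogeodesic flow on $(T^*M^{int},\omega)$. Because a symplectomorphism intertwines Hamiltonian vector fields, $X^{\omega_g}_H$ and $X^{\omega}_{\tilde H}$ are $\flat_g$-related, and therefore the flow of $X^{\omega_g}_H$ is obtained from the cogeodesic flow by conjugation with $\flat_g$; that is, it is precisely the geodesic flow $\tilde\Psi$ on $TM^{int}$. (If one prefers to avoid the cotangent picture, the same conclusion follows from a short computation in local coordinates verifying that the integral curves of $X^{\omega_g}_H$ solve $\ddot x^k + \Gamma^k_{ij}\dot x^i\dot x^j = 0$; I would relegate that to a routine check.)

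Second, for smoothness and the restriction claim: $\flat_g$ is a diffeomorphism $TM^{int}\to T^*M^{int}$ and $\omega$ is a smooth symplectic form, so $\omega_g$ is a smooth symplectic form on $TM^{int}$; since $H\in C^\infty(TM)$, the field $X^{\omega_g}_H = \sharp_{\omega_g}(\mathrm d H)$ is smooth on $TM^{int}$. On $SM^{int}=\{H=0\}\cap TM^{int}$ the differential $\mathrm d H$ is nonvanishing (as $\mathrm d(\|v\|_g^2)\neq 0$ off the zero section), so $SM^{int}$ is a smooth embedded hypersurface, and the antisymmetry of $\omega_g$ gives $\mathrm d H\big(X^{\omega_g}_H\big) = \omega_g\big(X^{\omega_g}_H, X^{\omega_g}_H\big) = 0$, so $X^{\omega_g}_H$ is everywhere tangent to $SM^{int}$ and thus restricts to a smooth vector field there. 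Finally, the formula $\ddot\gamma_v(0)=X^{\omega_g}_H(v)$ is then immediate: by the first step the integral curve of $X^{\omega_g}_H$ through $v\in SM^{int}$ is $s\mapsto \dot\gamma_v(s)$ regarded as a curve in $TM$, whose velocity at $s=0$ equals $X^{\omega_g}_H(v)$ on the one hand and $\ddot\gamma_v(0)$, in the sense of the text, on the other; since $v\in SM^{int}$, both lie in $T_v(SM^{int})$.

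I do not expect a genuine obstacle: the statement is essentially the standard fact that the metric Hamiltonian generates the geodesic flow, transcribed from $T^*M$ to $TM$. The only points requiring care are bookkeeping ones, namely checking that the sign and normalization conventions for $\omega_g$, for $H$, and for $\flat_g$ are mutually consistent so that the Hamiltonian flow is the geodesic flow itself (rather than its time reversal or a reparametrization), and keeping the whole argument on $TM^{int}$, away from the zero section and the boundary, where $\flat_g$ is a genuine diffeomorphism and $\omega_g$ is nondegenerate.
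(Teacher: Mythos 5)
Your proposal is correct and follows essentially the same route as the paper, which records this lemma as an observation: the geodesic flow on $TM$ is the Hamiltonian flow of $H$ with respect to $\omega_g = \flat_g^*\omega$ (the standard cogeodesic identification transported by the musical isomorphism), and since $SM$ is a level surface of $H$ the field $X^{\omega_g}_H$ is tangent to it and so restricts smoothly. Your write-up simply supplies the routine details the paper leaves implicit.
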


\section{Characterisation of $\mathcal{N}$}

In this section we reach the main result of the paper, which is a characterisation of $\mathcal{N}_\phi$ as a sum of Fourier integral operators. To do this we use the decompostion described in the previous section
\be{Ndef}
\mathcal{N}_\phi = \pi_* \circ \phi^m\circ  F^* \circ F_* \circ \phi^m \circ \pi^*.
\ee
Our first result in this direction is the following theorem.
\begin{theorem} \label{NSOthm}
Define
\[
\mathcal{N}_{S M} = F^* \circ F_*.
\]
Then $\mathcal{N}_{S M} \in \mathcal{I}^{-1/2}\left (S M^{int} \times S M^{int},\ C'_{\mathcal{N}_{S M}};\ \Omega^{1/2}_{S M^{int} \times S M^{int}} \right )$ where
\[
C_{\mathcal{N}_{S M}} = \left \{ \left ( DF|^t_{v} \xi, DF|^t_{\tilde{v}} \xi \right ) \ : \ v,\ \tilde{v} \in S M^{int}, \ F(v) = F(\tilde{v}), \ \xi \in T^*_{F(v)} \partial_- S M \right \}.
\]
Furthermore, the principal symbol of $\mathcal{N}_{SM}$ does not vanish.
\end{theorem}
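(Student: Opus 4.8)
The plan is to realise $\mathcal{N}_{SM}$ as the composition of the two Fourier integral operators $F^{*}$ and $F_{*}$ provided by Lemma~\ref{pushpull_lemm}, and to check that this composition is transversal (equivalently, clean with excess zero), so that the composition calculus for FIOs applies directly. By Lemma~\ref{pushpull_lemm} we have $F_{*},F^{*}\in\mathcal{I}^{-1/4}$ with canonical relations $C_{F_{*}},C_{F^{*}}$ and nowhere vanishing principal symbols. Unwinding these canonical relations, a pair $(\eta,\tilde\eta)$ lies in $C_{F^{*}}\circ C_{F_{*}}$ exactly when there is some $\xi\in T^{*}\partial_{-}SM\setminus\{0\}$ with $(\eta,\xi)\in C_{F^{*}}$ and $(\xi,\tilde\eta)\in C_{F_{*}}$; since $\xi$ must then be based simultaneously at $F(v)$ and at $F(\tilde v)$ for the witnessing $v,\tilde v\in SM^{int}$, this forces $F(v)=F(\tilde v)$, $\eta=DF|_{v}^{t}\xi$ and $\tilde\eta=DF|_{\tilde v}^{t}\xi$. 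Thus, set-theoretically, $C_{F^{*}}\circ C_{F_{*}}$ is precisely the claimed $C_{\mathcal{N}_{SM}}$, and since $DF|_{v}^{t}$ is injective all the covectors involved are nonzero, so this is a composition of proper FIOs.

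Next I would verify transversality. Because $F$ is a submersion, so is $F\times F$, hence $\{(v,\tilde v):F(v)=F(\tilde v)\}=(F\times F)^{-1}(\Delta_{\partial_{-}SM})$ is a smooth submanifold of $SM^{int}\times SM^{int}$ of dimension $2(2n-1)-(2n-2)$; adjoining $\xi\in T^{*}_{F(v)}\partial_{-}SM\setminus\{0\}$ yields a smooth manifold $\Sigma$ of dimension $2(2n-1)$ parametrising the fibre product $C_{F^{*}}\times_{T^{*}\partial_{-}SM}C_{F_{*}}$. The map $\Sigma\to T^{*}SM^{int}\times T^{*}SM^{int}$, $(v,\tilde v,\xi)\mapsto(DF|_{v}^{t}\xi,DF|_{\tilde v}^{t}\xi)$, is an injective immersion: the two base points recover $v$ and $\tilde v$, and $DF|_{v}^{t}$ is injective (as $DF|_{v}$ is onto) and recovers $\xi$. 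Its image therefore has the same dimension $2(2n-1)$ as a canonical relation in $T^{*}SM^{int}\times T^{*}SM^{int}$, so the excess vanishes, the composition is transversal, and $C_{\mathcal{N}_{SM}}$ is an (immersed) Lagrangian. Concretely the same facts are visible in geodesic-flow coordinates: $(w,s)\mapsto\dot\gamma_{w}(s)$ is a diffeomorphism from $\{(w,s)\in\partial_{-}SM\times\mathbb{R}:0<s<\tau_{+}(w)\}$ onto $SM^{int}$ carrying $F$ to the projection $(w,s)\mapsto w$, and in these coordinates the Schwartz kernel of $\mathcal{N}_{SM}=F^{*}\circ F_{*}$ is $\delta(w-w')$ times smooth nowhere vanishing half-densities — a conormal distribution conormal to $\{w=w'\}$, of order $\tfrac12\dim\partial_{-}SM-\tfrac14\dim(SM^{int}\times SM^{int})=(n-1)-\tfrac{2n-1}{2}=-\tfrac12$, with canonical relation $N^{*}\{w=w'\}=C_{\mathcal{N}_{SM}}$ and nonvanishing symbol.

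It then remains to invoke the transversal composition theorem for FIOs: since the orders of $F^{*}$ and $F_{*}$ add (the excess being zero), $\mathcal{N}_{SM}\in\mathcal{I}^{-1/2}(SM^{int}\times SM^{int},C'_{\mathcal{N}_{SM}};\Omega^{1/2}_{SM^{int}\times SM^{int}})$, with principal symbol the composition of the nowhere vanishing principal symbols of $F^{*}$ and $F_{*}$; because the excess is zero this composition is just a product of symbols together with a natural invertible half-density isomorphism, hence again nowhere vanishing. Globally, Assumption~\ref{basicassume} is what makes $C_{\mathcal{N}_{SM}}$ a bona fide canonical relation: non-trapping makes every geodesic segment, and hence every fibre of $F$, finite, and the absence of self-intersecting geodesics keeps the fibre product (and the support of the kernel) proper and closed, so the local statements patch to the global assertion.

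The step I expect to require the most care is the transversality/cleanliness verification, and in particular keeping control near the diagonal locus $v=\tilde v$, which sits inside $C_{\mathcal{N}_{SM}}$ together with the off-diagonal conjugate-point sheets; the flow-coordinate description is the cleanest way to see that no degeneracy occurs there and that the order, canonical relation and nonvanishing symbol come out exactly as stated, while the remaining work is the bookkeeping of density and half-density factors and the global properness argument from Assumption~\ref{basicassume}.
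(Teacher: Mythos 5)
Your proposal reaches the correct conclusion, and its second thread is a genuinely different route from the paper's. The paper proves the theorem through the abstract clean composition calculus: it parametrizes the intersection $C = (C_{F^*}\times C_{F_*})\cap(T^*SM^{int}\times\Delta(T^*\partial_-SM)\times T^*SM^{int})$ by an explicit embedding $P_C(\xi,s,\tilde s)$, verifies cleanness by showing (via a curve argument) that every vector in the intersection of the tangent spaces is tangent to $C$, deduces excess zero from injectivity of $DF|_v^t$, and proves properness of the projection $\pi_C$ using strict convexity of $\partial M$ (boundedness of $\tau_+$ away from $0$ on compacta). Your flow-coordinate observation -- that $(w,s)\mapsto\dot\gamma_w(s)$ conjugates $F$ to the projection $(w,s)\mapsto w$, so that the Schwartz kernel of $\mathcal{N}_{SM}$ is $\delta(w-w')$ times a smooth nonvanishing half-density, hence a conormal distribution to $\{w=w'\}$ of order $\tfrac12(2n-2)-\tfrac14(4n-2)=-\tfrac12$ with Lagrangian $N^*\{w=w'\}'=C'_{\mathcal{N}_{SM}}$ -- bypasses the composition calculus entirely and is arguably more elementary; it is consistent with the explicit formula the paper itself records in the remark after the theorem, and it buys a very transparent identification of order, canonical relation and nonvanishing symbol, at the cost of being specific to this particular composition (the paper's machinery is set up so the same scheme carries over to $L_\chi$ and $\mathcal{N}_\phi$, where explicit kernels are less usable).

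Two cautions on your first thread. The inference ``the fibre product $\Sigma$ is a smooth manifold of the expected dimension $2(2n-1)$, therefore the excess vanishes and the composition is transversal'' is not valid as stated: an intersection can have the expected dimension while the tangent spaces of the two factors meet in something strictly larger (two tangent curves in the plane), in which case the intersection is neither clean nor transversal and the composition theorem does not apply. What must be checked is precisely the tangent-space condition, which is the content of the paper's curve argument showing $T_{c_0}(C_{F^*}\times C_{F_*})\cap T_{c_0}(T^*SM^{int}\times\Delta\times T^*SM^{int})\subset T_{c_0}C$; alternatively your flow coordinates supply this for free, since there $F$ is a linear projection and everything is product-like. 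Second, properness of $\pi_C$ (needed for the composition theorem) comes from strict convexity of the boundary, not from the absence of self-intersecting geodesics -- that hypothesis is not used in this theorem at all, but only later in the paper for the compositions defining $L_\chi$ and $\mathcal{N}_\phi$. With the cleanness verification supplied (or with the conormal description taken as the proof), your argument is sound.
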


\begin{remark}
It is worthwhile to note that, while we are using the abstract FIO composition calculus to analyze the operators in this theorem, in fact it is not hard at all to find an explicit formula for $\mathcal{N}_{S M}$. Indeed, using \eqref{F_*} and \eqref{F^*} we can see that
\[
\mathcal{N}_{S M}[h](\nu) = \int_{\tau_-(\nu)}^{\tau_+(\nu)} \frac{h}{|\mathrm{d} SM|^{1/2}} (\dot \gamma_{\nu}(s))\ \mathrm{d} s\ |\mathrm{d} SM|^{1/2}.
\]
\end{remark}
\begin{proof}
The proof will be an application of the clean composition calculus for FIOs. There is an adjustment required since $F^*$ is not properly supported. However, using the fact that
\[
F_*: C_c^\infty( \Omega^{1/2}_{SM^{int}}) \rightarrow C_c^\infty( \Omega^{1/2}_{\partial_- SM^{int}})
\]
continuously and $F^*: C_c^\infty( \Omega^{1/2}_{\partial_- SM^{int}}) \rightarrow C^\infty( \Omega^{1/2}_{SM^{int}})$ there is no issue defining the composition of the two operators, and we may localize and reduce to the case of a composition of two properly supported FIOs. Therefore the proof is reduced to analysis of the canonical relations.

Using Lemma~\ref{pushpull_lemm}, it is straightforward to see that
\[
C_{\mathcal{N}_{S M}} = C_{F^*} \circ C_{F_*},
\]
where $C_{\mathcal{N}_{S M}}$ is defined in the statement of Theorem~\ref{NSOthm}, and so if we can show that this composition is clean the proof will be almost complete. To show the composition is clean we must show:
\begin{enumerate}[(i)]
\item The intersection
\begin{equation}\label{C}
C = (C_{F^*} \times C_{F_*}) \cap (T^* S M^{int} \times \Delta(T^* \partial_- S M) \times T^* S M^{int})
\end{equation}
is clean in the sense that $C$ is an embedded submanifold, and at every point $c \in C$ the tangent space $T_c C$ is equal to the intersection of the tangent spaces of the two manifolds being intersected.
\item The projection map $\pi_C : C \rightarrow T^* S M^{int} \times T^* S M^{int}$ is proper.
\item For every $\phi \in T^* S M^{int} \times T^* S M^{int}$, $\pi_C^{-1}(\phi)$ is connected.
\end{enumerate}
To begin we note that since $DF|_v$ is surjective for every $v$, $DF|_v^t$ is injective for every $v$. This implies that $\pi_C$ is injective, and so point (iii) is immediate and assuming the other points are proven the excess is zero. Since the principal symbols of each of the two operators is not vanishing this will also show that the principal symbol of the composition is not vanishing, and so in fact all that remains is to demonstrate points (i) and (ii) hold.

To begin proving point (i) we first observe that $\ddot \gamma_v(0)$ is in the kernel of $DF|_v$ for all $v \in S M^{int}$, and since $DF|_v$ is surjective by a dimension count the full kernel of $DF|_v$ is the span of $\ddot \gamma_v(0)$. By Lemma~\ref{Xsmooth} the range of $DF|_v^t$ is thus
\[
\left \{ \theta \in T^*_v S M^{int} \ : \ \theta \left ( X^{\omega_g}_H(v) \right ) = 0 \right \}.
\]
Letting $v$ vary, and using the nontrapping assumption on $(M,g)$, we find that the range of the map
\[
G: \{ (\xi,s) \in T^* \partial_- S M \times \mathbb{R} \ : \ 0 < s < \tau_+(\xi) \} \ni (\xi,s) \mapsto DF|_{\dot \gamma_{\pi_{T^* S M}(\xi)}(s)}^t \xi \in T^* S M^{int}
\]
is precisely the set
\begin{equation} \label{image}
\left \{ \theta \in T^* S M^{int} \ : \ \theta \left ( X^{\omega_g}_H(\pi_{T^* S M}(\theta)) \right ) = 0 \right \}.
\end{equation}
Again by Lemma~\ref{Xsmooth} the map
\[
T^* S M^{int}\ni \theta \mapsto \theta \left ( X^{\omega_g}_H(\pi_{T^* S M}(\theta)) \right )
\]
is a smooth submersion and so the set \eqref{image}, as the zero level set of a smooth submersion, is a closed embedded submanifold of $T^* S M^{int}$. Further, the map $G$ is a diffeomorphism onto that set as can be shown using the inverse function theorem. We will write the inverse of $G$ composed with projections onto the $\xi$ and $s$ components as $G^{-1}_\xi$ and $G^{-1}_s$ respectively

Now note that $C$ can be parametrized by a mapping
\[
P_C: \mathcal{D}_{P_C} = \{(\xi, s , \tilde{s}) \in T^* \partial_- S M \times \mathbb{R} \times \mathbb{R} \ : \ 0 < s, \tilde{s} < \tau_+(\pi_{T^* S M}(\xi))\} \rightarrow C
\]
defined by
\[
P_C(\xi,s, \tilde{s}) = \left ( DF|_{\dot \gamma_{\pi_{T^* S M}(\xi)}(s)}^t \xi, \xi, \xi, DF|_{\dot \gamma_{\pi_{T^* S M}(\xi)}(\tilde{s})}^t \xi \right ).
\]
We initially have directly from its construction that $P_C$ is an injective immersion into $\mathrm{Range}(G) \times T^* \partial_- S M \times T^* \partial_- S M \times \mathrm{Range}(G)$, and to show that it is a smooth embedding it only remains to show that the inverse is continuous. To see the inverse $P_C^{-1}$ is continuous we note that it is the restriction to $C$ of the map
\[
P^{ext,-1}_C: \mathrm{Range}(G) \times T^* \partial_- S M \times T^* \partial_- S M \times \mathrm{Range}(G) \rightarrow \mathcal{D}_{P_C}
\]
defined by
\[
P^{ext,-1}_C(A_1, A_2, A_3, A_4) = \Big (G^{-1}_\xi(A_1), G^{-1}_s(A_1), G^{-1}_s(A_4)\Big ).
\]
Since $G$ is a diffeomorphism, in particular $P^{ext,-1}_C$ is continuous, and so is $P_C^{-1}$. Thus $P_C$ is a smooth embedding and $C$ is a $4n - 2$ dimensional embedded submanifold.

To check that the intersection is clean note that, similar to $C$, $C_{F^*} \times C_{F_*}$ can be parametrized by a related mapping
\[
\begin{split}
P_{C_{F^*} \times C_{F_*}} : \mathcal{D}_{P_{C_{F^*} \times C_{F_*}}}  = \{(&\xi,\tilde{\xi},s, \tilde{s}) \in T^* \partial_- S M \times T^* \partial_- S M\times \mathbb{R} \times \mathbb{R} \ : \\
& \qquad  0 < s < \tau_+(\pi_{T^* S M}(\xi)), \quad  0 < \tilde{s} < \tau_+ (\pi_{T^* S M}(\tilde{\xi}))\} \\
& \rightarrow T^* S M^{int} \times T^* \partial_- S M \times T^* \partial_- S M \times T^* S M^{int}
\end{split}
\]
defined by
\[
P_{C_{F^*} \times C_{F_*}}(\xi,\tilde{\xi},s, \tilde{s}) = \left ( DF|_{\dot \gamma_{\pi_{T^* S M}(\xi)}(s)}^t \xi, \xi, \tilde{\xi}, DF|_{\dot \gamma_{\pi_{T^* S M}(\tilde{\xi})}(\tilde{s})}^t \tilde{\xi} \right ).
\]
Suppose that $c_0 \in C$. Then any tangent vector $X \in T_{c_0} (C_{F^*} \times C_{F_*})$ comes from a smooth curve
\[
\mathbb{R} \ni \alpha \mapsto \left ( \xi(\alpha), \tilde{\xi}(\alpha), s(\alpha), \tilde{s}(\alpha) \right )
\]
such that $P_{C_{F^*} \times C_{F_*}}\left ( \xi(0), \tilde{\xi}(0), s(0), \tilde{s}(0) \right ) = c_0$ via
\[
X = \left . \frac{\mathrm{d}}{\mathrm{d} \alpha} \right |_{\alpha=0} P_{C_{F^*} \times C_{F_*}} \left ( \xi(\alpha), \tilde{\xi}(\alpha), s(\alpha), \tilde{s}(\alpha) \right ).
\]
This tangent vector $X$ is also in $T_{c_0}(T^* S M^{int} \times \Delta(T^* \partial_- S M) \times T^* S M^{int})$ if and only if
\[
\left . \frac{\mathrm{d}}{\mathrm{d} \alpha} \right |_{\alpha=0} \xi(\alpha) = \left . \frac{\mathrm{d}}{\mathrm{d} \alpha} \right |_{\alpha=0} \tilde{\xi}(\alpha).
\]
Thus expressed in any local coordinate system $\xi$ and $\tilde{\xi}$ agree to first order at $\alpha = 0$, and so in fact
\[
\begin{split}
X & = \left . \frac{\mathrm{d}}{\mathrm{d} \alpha} \right |_{\alpha=0} P_{C_{F^*} \times C_{F_*}} \left ( \xi(\alpha), \tilde{\xi}(\alpha), s(\alpha), \tilde{s}(\alpha) \right )\\
& =  \left . \frac{\mathrm{d}}{\mathrm{d} \alpha} \right |_{\alpha=0} P_{C_{F^*} \times C_{F_*}} \left ( \xi(\alpha), \xi(\alpha), s(\alpha), \tilde{s}(\alpha) \right )\\
& = \left . \frac{\mathrm{d}}{\mathrm{d} \alpha} \right |_{\alpha=0} P_C \left ( \xi(\alpha), s(\alpha), \tilde{s}(\alpha) \right ).
\end{split}
\]
Therefore $X \in T_{c_0} C$, and the intersection is clean. Thus point (i) is proven and all that remains is point (ii).

For point (ii) we note that if $K \Subset T^* S M^{int} \times T^* S M^{int}$ is a compact set, then for some constant $\epsilon >0$ we have that
\[
\begin{split}
K \Subset L := \{ (\theta,\tilde{\theta})\in T^* S M^{int} \times T^* S M^{int} \ : & \ \mathrm{dist}(\pi \circ \pi_{T^*S M^{int}}(\theta),\partial M) \geq \epsilon, \\
& \mathrm{dist}(\pi \circ \pi_{T^* S M^{int}}(\tilde{\theta}),\partial M) \geq \epsilon,  \\
& (\|\theta\|_g + \| \tilde{\theta}\|_g ) \leq \epsilon^{-1} \}
\end{split}
\]
where $L$ is compact. Now, because the boundary of $M$ is strictly convex there exists $\tilde{\epsilon}>0$ such that for $(\theta,\tilde{\theta}) \in L$
\[
\langle F(\pi_{T^*S M^{int}}(\theta)), \nu_+ \rangle_g < -\tilde{\epsilon}, \quad \langle F(\pi_{T^*S M^{int}}(\tilde{\theta})), \nu_+\rangle_g < -\tilde{\epsilon},
\]
\[
\quad \tau_+(\pi_{T^*S M^{int}}(\theta)),\ \tau_+(\pi_{T^*S M^{int}}(\tilde{\theta})) \geq \epsilon, \quad \quad \tau_-(\pi_{T^*S M^{int}}(\theta)),\ \tau_-(\pi_{T^*S M^{int}}(\tilde{\theta})) \leq -\epsilon.
\]
Therefore there exists $\tilde{M}$ such that
\[
\begin{split}
P_C^{-1}(\pi_C^{-1}(L)) \subset \big \{ (\xi,s, \tilde{s}) \in \mathcal{D}_{P_C} \ : & \ \langle \pi_{T^* \partial_- S M}(\xi), \nu_+ \rangle_g < -\tilde{\epsilon}, \\
& \epsilon \leq s, \ \tilde{s} \leq \tau_+(\pi_{T^* \partial_-S M}(\xi)) - \epsilon,\\
&  \big \|\xi\|_g \leq \tilde{M} \}.
\end{split}
\]
This last set is compact and since $P_C^{-1}(\pi_C^{-1}(K)) \subset P_C^{-1}(\pi_C^{-1}(L))$ we have that $P_C^{-1}(\pi_C^{-1}(K))$ is a closed subset of a compact set, and therefore is compact itself. Since $P_C$ is a diffeomorphism this implies that $\pi_C$ is a proper map and thus completes the proof.
\end{proof}

Now let $\chi \in C^\infty(S M^{int} \times S M^{int})$. We will eventually need to cut up the operator $\mathcal{N}_{SM}$ using such functions. Indeed, by the notation $\chi \mathcal{N}_{SM}$ we mean the operator whose Schwartz kernel is given by $\chi K_{\mathcal{N}_{SM}}$ where $K_{\mathcal{N}_{SM}}$ is the Schwartz kernel of $\mathcal{N}_{SM}$. With this in mind, for the next step we consider compositions of the form
\[
(\chi \mathcal{N}_{S M}) \circ \pi^*.
\]
As we will see in the next theorem this is still a Fourier integral operator.

\begin{theorem}
Define
\[
L_\chi = (\chi N_{S M}) \circ \pi^*.
\]
Then $L_{\chi} \in \mathcal{I}^{-(n+1)/4}(SM^{int} \times M^{int},\ C'_{L};\ \Omega^{1/2}_{SM^{int} \times M^{int}})$ where
\be{LambdaL}
\begin{split}
C_{L} = \{ (DF|_{v}^t \xi, \tilde{\eta}) \in T^* SM^{int} \times T^* M^{int} \ & : \ v 
\in S M^{int}, \ \exists\ \tilde{v} \in S M^{int} \\
&\hskip-1in \mbox{such that} \ \pi(\tilde{v}) = \pi_{T^* M}(\tilde{\eta}), \ F(v) = F(\tilde{v}), \ DF|_{\tilde{v}}^t \xi = D\pi |_{\tilde{v}}^t \tilde{\eta} \}.
\end{split}
\ee
Furthermore, the principal symbol of $L_\chi$ only vanishes at points $(DF|_{v}^t \xi, \tilde{\eta})$ for which, when $\tilde{v}$ satisfies the requirement in \eqref{LambdaL}, $\chi(v,\tilde{v}) = 0$.
\end{theorem}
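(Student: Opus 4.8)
The plan is to establish this via the clean composition calculus for FIOs, following the proof of Theorem~\ref{NSOthm}, applied to the operators $\chi\mathcal{N}_{SM}$ and $\pi^*$. By Theorem~\ref{NSOthm}, $\mathcal{N}_{SM}\in\mathcal{I}^{-1/2}(SM^{int}\times SM^{int},C'_{\mathcal{N}_{SM}};\Omega^{1/2})$ with nonvanishing principal symbol; multiplying its Schwartz kernel by $\chi\in C^\infty(SM^{int}\times SM^{int})$ leaves an FIO of the same order with the same canonical relation (the wavefront set of the kernel can only shrink) and multiplies the principal symbol at a point of $C_{\mathcal{N}_{SM}}$ with base point $(v,\tilde v)\in SM^{int}\times SM^{int}$ by the factor $\chi(v,\tilde v)$. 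By Lemma~\ref{pushpull_lemm}, $\pi^*$ is an FIO of order $(1-n)/4$ with nonvanishing principal symbol and canonical relation $C_{\pi^*}=\{(D\pi|_{\tilde v}^t\tilde\eta,\tilde\eta):\tilde v\in SM^{int},\ \tilde\eta\in T^*_{\pi(\tilde v)}M\setminus\{0\}\}$. Since $\pi$ is proper, $\pi^*$ maps $C_c^\infty(\Omega^{1/2}_{M^{int}})$ continuously into $C_c^\infty(\Omega^{1/2}_{SM^{int}})$, so $(\chi\mathcal{N}_{SM})\circ\pi^*$ is a well defined operator from $C_c^\infty(\Omega^{1/2}_{M^{int}})$ to $C^\infty(\Omega^{1/2}_{SM^{int}})$, and, as in the proof of Theorem~\ref{NSOthm}, after localization we may reduce to a composition of properly supported FIOs; the proof then comes down to showing the composition of canonical relations is clean and computing the excess.

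Unwinding the definitions shows that $C_{\mathcal{N}_{SM}}\circ C_{\pi^*}$ equals the set $C_L$ of \eqref{LambdaL}: a point of the composition is a pair $(DF|_v^t\xi,\tilde\eta)$ admitting a middle covector $\theta\in T^*SM^{int}$ with $(DF|_v^t\xi,\theta)\in C_{\mathcal{N}_{SM}}$ and $(\theta,\tilde\eta)\in C_{\pi^*}$; the first membership forces $\theta=DF|_{\tilde v}^t\xi$ for some $\tilde v\in SM^{int}$ with $F(\tilde v)=F(v)$, and the second forces $\theta=D\pi|_{\tilde v}^t\tilde\eta$ with $\pi(\tilde v)=\pi_{T^*M}(\tilde\eta)$, the two $\tilde v$'s being equal since they are the common base point of $\theta$. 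It therefore remains to verify, for
\[
C=(C_{\mathcal{N}_{SM}}\times C_{\pi^*})\cap(T^*SM^{int}\times\Delta(T^*SM^{int})\times T^*M^{int}),
\]
the three conditions (i)--(iii) from the proof of Theorem~\ref{NSOthm}, and to confirm that the excess vanishes.

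The decisive observation, and the one where Assumption~\ref{basicassume} enters, is that the projection $\pi_C:C\to T^*SM^{int}\times T^*M^{int}$ is injective: given $(DF|_v^t\xi,\tilde\eta)$ in its image, the base point $v=\pi_{T^*SM}(DF|_v^t\xi)$ is determined, then $\xi$ is determined uniquely because $DF|_v^t$ is injective, then the geodesic $\gamma_{F(v)}$ is determined, and because $(M,g)$ has no self-intersecting geodesics this geodesic passes through $\pi_{T^*M}(\tilde\eta)$ at most once, so $\tilde v$, and hence $\theta=D\pi|_{\tilde v}^t\tilde\eta$, is determined as well. Injectivity of $\pi_C$ gives condition (iii) immediately and forces the excess to be zero; granting (i) and (ii), the composition is then clean with excess zero, so $L_\chi\in\mathcal{I}^{-(n+1)/4}$ (since $-1/2+(1-n)/4=-(n+1)/4$) with canonical relation $C_L$, and, the fibers of $\pi_C$ being points, its principal symbol at $(DF|_v^t\xi,\tilde\eta)$ is a nonvanishing multiple of the product of the principal symbols of $\chi\mathcal{N}_{SM}$ and $\pi^*$ at the corresponding points, hence a nonvanishing multiple of $\chi(v,\tilde v)$ for the $\tilde v$ from \eqref{LambdaL}; this yields the asserted statement about the vanishing of the principal symbol.

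It remains to prove (i) and (ii). For (i) I would exhibit an explicit parametrization of $C$ in the spirit of the map $P_C$ in the proof of Theorem~\ref{NSOthm}, using the diffeomorphism $G$ of that proof for the $C_{\mathcal{N}_{SM}}$ factor and the embedding $(\tilde v,\tilde\eta)\mapsto(D\pi|_{\tilde v}^t\tilde\eta,\tilde\eta)$ for the $C_{\pi^*}$ factor. A convenient choice is to parametrize $C$ by triples $(\tilde v,\tilde\eta,s)$ with $\tilde v\in SM^{int}$, $\tilde\eta\in T^*_{\pi(\tilde v)}M\setminus\{0\}$ satisfying $\tilde\eta(\tilde v)=0$, and $\tau_-(\tilde v)<s<\tau_+(\tilde v)$ (so $v=\dot\gamma_{\tilde v}(s)$), sending $(\tilde v,\tilde\eta,s)$ to $(DF|_v^t\xi,\ D\pi|_{\tilde v}^t\tilde\eta,\ D\pi|_{\tilde v}^t\tilde\eta,\ \tilde\eta)$, where $\xi$ is the unique preimage of $D\pi|_{\tilde v}^t\tilde\eta$ under $DF|_{\tilde v}^t$. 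The constraint $\tilde\eta(\tilde v)=0$ is forced: $D\pi|_{\tilde v}^t\tilde\eta$ must lie in the range of $DF|_{\tilde v}^t$, which by Lemma~\ref{Xsmooth} and the analysis in the proof of Theorem~\ref{NSOthm} is the annihilator of $\ddot\gamma_{\tilde v}(0)$, and $D\pi|_{\tilde v}(\ddot\gamma_{\tilde v}(0))=\tilde v$; it is also sufficient, so the admissible $(\tilde v,\tilde\eta)$ form a submanifold because $(\tilde v,\tilde\eta)\mapsto\tilde\eta(\tilde v)$ is a submersion away from the zero section. The displayed map is then a smooth injective immersion with continuous inverse — the inverse reads off $\tilde\eta$, then $\tilde v$ as the base point of the middle covector, then $s$ from the flow, using that $G$ is a diffeomorphism — hence an embedding, so $C$ is an embedded submanifold. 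Cleanness of the tangent spaces follows by the same curve computation as in the proof of Theorem~\ref{NSOthm}: a tangent vector to $C_{\mathcal{N}_{SM}}\times C_{\pi^*}$ at a point of $C$ that is also tangent to $T^*SM^{int}\times\Delta(T^*SM^{int})\times T^*M^{int}$ must have its two middle components agreeing to first order, and this forces it to be tangent to $C$. For (ii), that $\pi_C$ is proper, I would copy the compactness argument from the proof of Theorem~\ref{NSOthm}: a compact subset of $T^*SM^{int}\times T^*M^{int}$ lies in one on which the base points stay a fixed distance from $\partial M$, the covector norms are bounded, and (by strict convexity of $\partial M$) the times $\tau_\pm$ are bounded away from $0$, and pulling such a set back through the parametrization of $C$ gives a compact set. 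I expect step (i) to be the main point requiring care — in particular, tracking the constraint $\tilde\eta(\tilde v)=0$ so that the parametrization really lands in and surjects onto $C$, and pinning down that the excess is exactly zero; once the injectivity of $\pi_C$ is established, the remaining verifications are routine adaptations of those for Theorem~\ref{NSOthm}.
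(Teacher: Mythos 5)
Your proposal is correct and follows essentially the same route as the paper's proof: reduce to the clean composition calculus, get condition (iii) and zero excess from injectivity of $\pi_C$ via injectivity of $DF|_v^t$, $D\pi|_{\tilde v}^t$ and the no-self-intersecting-geodesics hypothesis, prove (i) by an explicit parametrization of the intersection $C$, and prove (ii) by the same properness argument as in Theorem~\ref{NSOthm}. Your parametrization by $(\tilde v,\tilde\eta,s)$ with $\tilde\eta(\tilde v)=0$ is the same as the paper's parametrization by $\mathcal{O}\times\mathbb{R}$ (the set \eqref{mcO}), identified through $\zeta=D\pi|_{\tilde v}^t\tilde\eta$, so the difference is purely presentational.
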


\begin{remark}
As with $N_{SM}$ we can easily find an explicit form for the action of $L_\chi$ which is
\[
L_\chi[f](\nu) = \int_{\tau_-(\nu)}^{\tau_+(\nu)} \chi(\nu, \dot{\gamma}_\nu(s)) \frac{f}{|\mathrm{d} SM|^{1/2}} (\gamma_{\nu}(s)) \ \mathrm{d} s\ |\mathrm{d} SM|^{1/2}.
\]
\end{remark}

\begin{proof}
Composing the canonical relation for $\pi^*$ given by Lemma \ref{pushpull_lemm} with that for $\mathcal{N}_{SM}$ given in Theorem~\ref{NSOthm} we obtain $C_{L_\chi}$ given in \eqref{LambdaL}. If, as in the proof of Theorem~\ref{NSOthm}, we show that the composition of the canonical relations is clean then this will complete the proof as before.

We refer to points (i)-(iii) given in the proof of Theorem~\ref{NSOthm}, and again show that they hold in the present case in which equation \eqref{C} is replaced by
\[
C = (C_{\mathcal{N}_{S M}} \times C_{\pi^*}) \cap (T^* S M^{int} \times \Delta(T^* S M) \times T^* M^{int}).
\]
First, similar to the previous proof, since $D \pi |^t_v$ is injective for all $v$, and by the hypothesis that there are no self-intersecting geodesics, $\pi_C$ is injective and so (iii) holds and the excess is $e = 0$. The statement concerning vanishing of the principal symbol will follow from the clean composition calculus as well, and so all that remains is to establish (i) and (ii).

To begin proving the other parts we introduce a sub-bundle $V$ of $T^* S M$ with fibres over each point $v \in S M$ defined by
\[
V_v = \mathrm{Range} (D\pi|^t_v).
\]
For every $v$, $D \pi|_v^t$ is a linear isomorphism from $T^*_{\pi(v)} M$ to $V_v$, and we denote the inverse by $(D \pi|_v^t)^{-1}: V_v \rightarrow T^*_{\pi(v)} M$. When these maps for each fibre are combined together the result is a bundle mapping which we will write as $D \pi^{-t}: V \rightarrow T^* M$. In order to parametrize $C$ we introduce an embedded submanifold $\mathcal{O}$ of $V$ defined by
\be{mcO}
\mathcal{O} = \Big \{ \zeta \in V  \ : \ D \pi^{-t}\zeta \big ( \pi_{V}(\zeta) \big ) = 0 \Big \}.
\ee
Using the same mapping $G$ as in the proof of Theorem~\ref{NSOthm}, let us now show that $\mathcal{O} = \mathrm{Range}(G) \cap V$. Indeed, if $\zeta \in V$, then
\[
\begin{split}
\zeta \big (X_H^{\omega_g}(\pi_V(\zeta))\big ) & = D \pi^{-t} \zeta \big ( D \pi X_H^{\omega_g}(\pi_V(\zeta)) \big )\\
& = D \pi^{-t} \zeta \big ( \dot \gamma_{\pi_V(\zeta)}(0) \big ) \\
& = D \pi^{-t} \zeta \big ( \pi_V(\zeta) \big ).
\end{split}
\]
Since $\zeta \in \mathrm{Range}(G)$ if and only if $\zeta \big (X_H^{\omega_g}(\pi_V(\zeta))\big ) = 0$, this proves the claim that $\mathcal{O} = \mathrm{Range}(G) \cap V$.

Now we use the set $\mathcal{O}$ to parametrize $C$ via the map
\[
P_C: \mathcal{D}_{P_C} = \Big \{ \big ( \zeta , s \big ) \in \mathcal{O} \times \mathbb{R}\ : \ \tau_{-}(\pi_V(\zeta)) < s < \tau_+(\pi_{V}(\zeta)) \Big \} \rightarrow C
\]
defined by
\[
P_C\big ( \zeta , s \big ) = \Big (D \Psi(\cdot,s)|_{\Psi(\pi_V(\zeta),s)}^t \zeta , \zeta, \zeta, D \pi^{-t} \zeta \Big )
\]
where we recall that $\Psi$ is the geodesic flow in $S M$. Note that since for any $v \in S M$, and any $s$ where the right hand side is defined,
\[
F(v) = F (\Psi(v,s)),
\]
we have that
\begin{equation}\label{Psisemi}
DF|_v^t = D \Psi(\cdot,s)|_{v}^t D F|_{\Psi(v,s)}^t,
\end{equation}
and so $D \Psi(\cdot,s)|_v^t$ preserves $\mathrm{Range}(G)$. Thus we see that $P_C$ is an injective immersion into $\mathrm{Range}(G) \times \mathrm{Range}(G) \times \mathrm{Range}(G) \times T^*M$. To show $P_C^{-1}$ is continuous we note that it is the restriction to $C$ of the map
\[
P_C^{ext,-1}:\mathrm{Range}(G) \times \mathrm{Range}(G) \times \mathrm{Range}(G) \times T^*M \rightarrow \mathcal{D}_{P_C}
\]
defined by
\[
P_C^{ext,-1}(A,B,C,D) = \big ( B, G^{-1}_s(B) - G^{-1}_s(A) \big ).
\]
Thus $P_C^{-1}$ is continuous, and so $P_C$ is a smooth embedding and $C$ is a $3n -1$ dimensional embedded submanifold.

To complete the proof of (i) in this case note that $C_{\mathcal{N}_{S M}} \times C_{\pi^*}$ can be parametrized by the mapping
\[
\begin{split}
P_{C_{\mathcal{N}_{S M}} \times C_{\pi^*}} : \mathcal{D}_{P_{C_{\mathcal{N}_{S M}} \times C_{\pi^*}}} = \{ & (\xi, \zeta, s,\tilde{s})\in T^* \partial_- S M \times V \times \mathbb{R}\times \mathbb{R}\ : \\
& \qquad 0 < \tilde{s} < \tau_+(\xi), \ \tilde{s} - \tau_+(\xi) < s <  \tilde{s} \} \\
&\qquad  \rightarrow T^* S M^{int} \times T^* S M^{int} \times T^* S M^{int} \times T^* M^{int}
\end{split}
\]
defined by
\[
\begin{split}
P_{C_{\mathcal{N}_{S M}} \times C_{\pi^*}}(\xi,\tilde{\eta},s,\tilde{s}) & = \Big ( DF|_{\dot \gamma_{\pi_{T^* S M}(\xi)}(\tilde{s} - s)}^t \xi, DF|_{\dot \gamma_{\pi_{T^* S M}(\xi)}(\tilde{s})}^t \xi, \zeta, D \pi^{-t} \zeta \Big ) \\
& =  \Big ( D \Psi(\cdot,s)|_{\dot \gamma_{\pi_{T^* S M}(\xi)}(\tilde{s}-s)}^t DF|_{\dot \gamma_{\pi_{T^* S M}(\xi)}(\tilde{s})}^t \xi, DF|_{\dot \gamma_{\pi_{T^* S M}(\xi)}(\tilde{s})}^t \xi, \zeta, D \pi^{-t} \zeta \Big )
\end{split}
\]
We now follow precisely the same reasoning as in the proof of Theorem~\ref{NSOthm}. Indeed, if $c_0 \in C$, and $X \in T_{c_0}( C_{\mathcal{N}_{S M}} \times C_{\pi^*})$, then there is a smooth curve $\alpha \mapsto (\xi(\alpha),\zeta(\alpha), s(\alpha),\tilde{s}(\alpha))$ such that $P_{C_{\mathcal{N}_{S M}} \times C_{\pi^*}}(\xi(0),\zeta(0), s(0),\tilde{s}(0)) = c_0$ and the derivative along this curve at zero is $X$. As in the previous proof, if $X \in T_{c_0}(T^* S M^{int} \times \Delta(T^* S M^{int}) \times T^* M^{int})$ as well then in any local coordinates
\[
\zeta'(0) = \left . \frac{\mathrm{d}}{\mathrm{d} \alpha}\right |_{\alpha = 0} DF|_{\dot \gamma_{\pi_{T^* S M}(\xi(\alpha))}(\tilde{s}(\alpha))}^t \xi(\alpha).
\]
From this, noting also that
\[
\dot \gamma_{\pi_{T^* S M}(\xi)}(\tilde{s}-s) = \Psi\left (\pi_V \left (DF|_{\dot \gamma_{\pi_{T^* S M}(\xi)}(\tilde{s})}^t \xi \right), -s \right ),
\]
we conclude that
\[
X = \left .\frac{\mathrm{d}}{\mathrm{d} \alpha}\right |_{\alpha = 0} P_{C_{\mathcal{N}_{S M}} \times C_{\pi^*}}(\xi(\alpha),\zeta(\alpha), s(\alpha),\tilde{s}(\alpha)) = \left .\frac{\mathrm{d}}{\mathrm{d} \alpha}\right |_{\alpha = 0} P_C(\zeta(\alpha), s(\alpha))
\]
and so $X \in T_{c_0}C$. This completes the proof of (i) in this case.

The proof of (ii) follows exactly the same procedure as in Theorem~\ref{NSOthm}, and we will not repeat the details here.
\end{proof}

We finally wish to complete the analysis of $\mathcal{N}_\phi$ by considering the composition
\be{N2comp}
\mathcal{N}_\phi = \pi_* \circ L_{\phi \otimes \phi}.
\ee
We comment that, following from the previous remarks, it is not difficult to show that $\mathcal{N}_\phi$ is given explicitly by the formula
\be{explicitN}
\mathcal{N}_\phi[f](x) = \int_{S_x M} \phi(\nu) \left ( \int_{\tau_-(\nu)}^{\tau_+(\nu)} \frac{f}{|\mathrm{d} v_g|^{1/2}}(\gamma_{\nu}(s)) \ \phi(\dot\gamma_\nu(s))\ \mathrm{d} s \right ) \ |\mathrm{d} S_xM(\nu)|^{1/2}.
\ee
The composition of the canonical relations in this case is not as simple as it was in the previous theorems. Indeed, the composition in general has multiple connected components with one component giving a pseudodifferential operator, and the others occurring only when there are conjugate points. Under some additional hypotheses the extra components which appear in the case of conjugate points give rise to Fourier integral operators, whose canonical relations and orders can be calculated. This is shown below in Theorem~\ref{mainthm} which is our main result. For the statement of Theorem~\ref{mainthm} we use the following definition, notations, and lemmas which also clarify the geometric structure of the operators involved. 

\begin{definition} {\bf (Conjugate pairs)} \label{Conjugate pairs}
A pair of unit tangent vectors $(v ,\tilde{v}) \in S M \times S M$ is called a conjugate pair if $v$ and $\tilde{v}$ are both tangent to one geodesic, and
\[
K_{(v,\tilde{v})} = \mathrm{ker} \Big ( D \pi|_{\tilde{v}} \circ D_v \Psi |_{(v,\tau_+(v) - \tau_+(\tilde{v}))} \Big ) \bigcap \mathrm{ker} \Big ( D \pi |_{v} \Big ) \neq \{0\}.
\]
If the dimension of $K_{(v,\tilde{v})}$ is $k \geq 1$, then $(v,\tilde{v})$ is a conjugate pair of order $k$. We define the set $C_R$ of pairs of regular conjugate vectors to be the subset of $S M^{int} \times S M^{int}$ such that given any $(v,\tilde{v}) \in C_R$ there is a neighborhood $U$ of $(v,\tilde{v})$ in $S M^{int} \times S M^{int}$ such that any other pair $(w,\tilde{w}) \in U$ which is also conjugate is conjugate of the same order. Further, $C_{R,k}$ will be the subset of regular pairs of conjugate vectors having order $k$. Finally, the set of pairs of conjugate vectors $C_S$ which are not regular will be called the set of singular conjugate vectors.
\end{definition}

\noindent Of course it is standard to define conjugate points along geodesics in terms of vanishing Jacobi fields, and in fact Definition \ref{Conjugate pairs} is equivalent to this. We use Definition \ref{Conjugate pairs} because it is more convenient in the sequel. In the following lemma we prove that Definition \ref{Conjugate pairs} is equivalent to the traditional definition of conjugate points.

\begin{lemma}
If $(v,\tilde{v})$ is a conjugate pair of order $k$ (as per Definition \ref{Conjugate pairs}), then there is a $k$ dimensional space of Jacobi fields along $\gamma_v$ vanishing at both $\pi(v)$ and $\pi(\tilde{v})$. Conversely, if $\gamma_v$ passses through $\pi(\tilde{v})$ and there is a $k$ dimensional space of Jacobi fields along $\gamma_v$ vanishing at $\pi(v)$ and $\pi(\tilde{v})$, then $(v,\tilde{v})$ is a conjugate pair of order $k$.
\end{lemma}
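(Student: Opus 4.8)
The plan is to exhibit a natural linear isomorphism between $K_{(v,\tilde v)}$ and the space of Jacobi fields along $\gamma_v$ that vanish at both $\pi(v)$ and $\pi(\tilde v)$; granting this, both implications of the lemma follow simultaneously by equating dimensions. Let $s_0$ be the parameter with $\gamma_v(s_0) = \pi(\tilde v)$, which is unique by the non-self-intersection hypothesis of Assumption~\ref{basicassume}. Since $v$ and $\tilde v$ are tangent to one geodesic this gives $\dot\gamma_v(s_0) = \tilde v$, that is $\Psi(v, s_0) = \tilde v$, and a short computation with the definitions of $\tau_\pm$ gives $s_0 = \tau_+(v) - \tau_+(\tilde v)$. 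We may assume $\pi(v) \neq \pi(\tilde v)$, i.e. $s_0 \neq 0$ (the case $v = \tilde v$ being degenerate).

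Next I would invoke the standard identification of tangent vectors to the geodesic flow with Jacobi fields. For $w \in T_v SM^{int}$, choose a curve $\alpha \mapsto v(\alpha)$ in $SM^{int}$ with $v(0) = v$ and $\dot v(0) = w$; then $J_w(s) := \frac{\mathrm{d}}{\mathrm{d}\alpha}\big|_{\alpha = 0} \gamma_{v(\alpha)}(s)$ is a Jacobi field along $\gamma_v$, independent of the chosen curve. Differentiating the identity $\pi(\Psi(v,s)) = \gamma_v(s)$ in the $v$-slot gives
\[
D\pi|_{\Psi(v,s)} \circ D_v \Psi|_{(v,s)}(w) = J_w(s),
\]
and evaluating at $s = 0$ and at $s = s_0$ shows that $w \in K_{(v,\tilde v)}$ exactly when $J_w(0) = 0$ and $J_w(s_0) = 0$, i.e. when $J_w$ vanishes at $\pi(v) = \gamma_v(0)$ and $\pi(\tilde v) = \gamma_v(s_0)$. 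It therefore remains to show that $w \mapsto J_w$ is injective on $T_v SM^{int}$ and that its restriction to $K_{(v,\tilde v)}$ surjects onto the space of Jacobi fields along $\gamma_v$ vanishing at those two points.

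For injectivity I would use the horizontal--vertical splitting $T_v SM^{int} = H_v \oplus V_v$: under the usual identifications, the horizontal component of $w$ is carried by $D\pi|_v$ onto $J_w(0) \in T_{\pi(v)}M$, while the vertical component corresponds to the covariant derivative of $J_w$ at $s = 0$, which lies in $v^\perp = \{u \in T_{\pi(v)}M : \langle u, v\rangle_g = 0\}$. Since a Jacobi field is determined by its value and covariant derivative at one point, $w \mapsto J_w$ is injective, and its image is precisely the space of Jacobi fields whose covariant derivative at $s = 0$ is orthogonal to $v$. For the surjectivity statement, recall that along the unit-speed geodesic $\gamma_v$ the function $s \mapsto \langle J(s), \dot\gamma_v(s)\rangle_g$ is affine in $s$ for every Jacobi field $J$; if $J(0) = 0$ and $J(s_0) = 0$ with $s_0 \neq 0$ this affine function has two zeros, hence vanishes identically, which forces the covariant derivative of $J$ at $s = 0$ to be orthogonal to $v$, so $J = J_w$ for a unique $w$, and that $w$ then lies in $K_{(v,\tilde v)}$. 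Hence $w \mapsto J_w$ restricts to a linear isomorphism of $K_{(v,\tilde v)}$ onto the space of Jacobi fields along $\gamma_v$ vanishing at $\pi(v)$ and $\pi(\tilde v)$, which yields both directions of the lemma.

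The differentiation identities and the horizontal--vertical bookkeeping are routine; the point I would flag as needing genuine care is the compatibility between working in the unit sphere bundle $SM^{int}$ and the classical Jacobi-field picture. One must check that variations through unit-speed geodesics produce exactly the ``speed-free'' Jacobi fields, namely those whose covariant derivative at $s = 0$ is orthogonal to $v$, and --- crucially --- that the two-endpoint vanishing condition forces that constraint automatically, via the affine behavior of $\langle J, \dot\gamma_v\rangle_g$. This last observation is what prevents a spurious mismatch of dimensions between $K_{(v,\tilde v)} \subset T_v SM^{int}$ and the space of two-point-vanishing Jacobi fields, and makes the correspondence an honest isomorphism.
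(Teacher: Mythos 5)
Your proof is correct and follows essentially the same route as the paper: the same correspondence $X \mapsto J$ via $J(s) = D\pi|_{\Psi(v,s)} \circ D_v\Psi|_{(v,s)}X$ (the paper's formula \eqref{JX}), injectivity through the initial covariant derivative (the paper does this in natural coordinates, you via the horizontal--vertical splitting), and the converse via realizing a two-point-vanishing Jacobi field as a variation through unit-speed geodesics. Your affine-function argument for $\langle J,\dot\gamma_v\rangle_g$ is exactly the justification the paper only asserts when it says the variation can be chosen through unit-speed geodesics, so packaging everything as a single isomorphism between $K_{(v,\tilde v)}$ and the two-point-vanishing Jacobi fields is a tidy but not substantively different presentation.
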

\begin{proof}
Suppose first that $(v,\tilde{v})$ is a conjugate pair of order $k$, and take
\[
X \in K_{(v,\tilde{v})} =\mathrm{ker} \Big ( D \pi|_{\tilde{v}} \circ D_v \Psi |_{(v,\tau_+(v) - \tau_+(\tilde{v})} \Big ) \bigcap \mathrm{ker} \Big ( D \pi |_{v} \Big ).
\]
Suppose that $\alpha$ is a curve in $S M$ such that
\[
X = \dot \alpha(0),
\]
and that we have some natural coordinates $(x^j,v^j)$ on $TM$ such that in these coordinates
\[
X = a^j \frac{\partial}{\partial v^j}.
\]
Using $X$ we define a vector field $J$ along $\gamma_v$ by
\be{JX}
J(\gamma_v(t)) = D \pi |_{\Psi(v,t)} \circ D_v \Psi |_{(v,t)}\ X.
\ee
Clearly $J$ vanishes at $\pi(v)$ and $\pi(\tilde{v})$, and $J$ is a Jacobi field since it is the variation field of the variation through geodesics defined by
\be{gamma}
\gamma(t,s) = \pi(\Psi(\alpha(s),t)).
\ee
Finally, to check that the mapping $X \mapsto J$ is injective (and so there is a $k$ dimensional space of vanishing Jacobi fields), a calculation in coordinates, using the fact that $J$ vanishes at $\pi(v)$, shows that
\be{JXcoord}
\nabla_{v} J(\pi(v)) = a^j \frac{\partial}{\partial x^j}.
\ee
Here $\nabla$ is the Levi-Civita connection given by the metric $g$. Thus the map from $X$ to $\nabla_v J(\pi(v))$ is injective, and so the map $X \mapsto J$ is also injective.

Conversely suppose that $\gamma_v$ passes through $\pi(\tilde{v})$, and there is a $k$ dimensional space of Jacobi fields along $\gamma_v$ vanishing at $\pi(v)$ and $\pi(\tilde{v})$. Let $J$ be any such vanishing Jacobi field and let $\gamma(t,s)$ be a variation through unit speed geodesics with variation field $J$. The geodesics in the variation can be chosen to be unit speed since $J$ must vanish at two different points. Then define
\[
X = \frac{\partial^2 \gamma}{\partial t \partial s}(0,0) \in T SM
\]
so that in particular $X = \dot \alpha(0)$ where $\alpha(s) = \frac{\partial \gamma}{\partial t}(0,s)$. Then \eqref{gamma} holds, and from this we can then see that $X$ and $J$ are related by \eqref{JX}. This completes the proof.
\end{proof}

We now study the structure of the sets $C_{R,k}$ by adapting the method of \cite{Warner65} to prove the following theorem.

\begin{theorem}\label{Jrktheorem}
For each $k$ the set $C_{R,k}$ is an embedded $2n-1$ dimensional submanifold of $S M^{int} \times S M^{int}$. Furthermore, the set
\[
\begin{split}
J_{R,k} & := \{ ((v,\tilde{v}), (X,\tilde{X})) \in T(SM^{int} \times SM^{int}) \ : \ (v,\tilde{v}) \in C_{R,k}, \ X \in K_{(v,\tilde{v})}, \\
& \tilde{X} = D_v \Psi|_{(v,\tau_+(v)-\tau_+(\tilde{v}))} X \}
\end{split}
\]
forms a smooth vector bundle of dimension $k$ over $C_{R,k}$.
\end{theorem}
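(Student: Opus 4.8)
The plan is to recast Definition~\ref{Conjugate pairs} as a rank condition for a smooth bundle map along geodesics, and then to adapt Warner's analysis of the conjugate locus \cite{Warner65}, using the symplectic structure of the geodesic flow (Lemma~\ref{Xsmooth}) in place of his Jacobi-field computations and the no-self-intersection part of Assumption~\ref{basicassume} to transfer statements from the space of geodesics back to $SM^{int}\times SM^{int}$. Concretely, the map $\Phi:\mathcal{D}\to SM^{int}\times SM^{int}$, $\Phi(v,s)=(v,\Psi(v,s))$, is an injective immersion; checking that it is proper shows it is an embedding onto the $2n$-dimensional submanifold $\mathcal{G}$ of pairs of vectors tangent to a common geodesic, and since every conjugate pair lies in $\mathcal{G}$ it suffices to study $\Phi^{-1}(C_{R,k})\subset\mathcal{D}$. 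Over $\mathcal{D}$ I form the bundle map $\mathcal{A}:\mathbf{V}\to\mathbf{W}$ with $\mathbf{V}_{(v,s)}=\ker D\pi|_v$, $\mathbf{W}_{(v,s)}=\{\,w\in T_{\pi(\Psi(v,s))}M : w\perp\Psi(v,s)\,\}$, and $\mathcal{A}(v,s)=\bigl(D\pi|_{\Psi(v,s)}\circ D_v\Psi|_{(v,s)}\bigr)\big|_{\ker D\pi|_v}$; this lands in $\mathbf{W}_{(v,s)}$ because perpendicular Jacobi fields remain perpendicular, so $\mathcal{A}$ is a map between rank-$(n-1)$ bundles. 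Using $\tau_+(v)-\tau_+(\Psi(v,s))=s$ and Definition~\ref{Conjugate pairs}, $(v,\Psi(v,s))$ is conjugate of order $k$ iff $\dim\ker\mathcal{A}(v,s)=k$, and $K_{(v,\Psi(v,s))}=\ker\mathcal{A}(v,s)$; thus the set of conjugate pairs is $\{\det\mathcal{A}=0\}$ (with $\det\mathcal{A}$ a section of a line bundle, its zero set intrinsic), and I must show this is a smooth hypersurface near each point of $C_{R,k}$.

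For the local analysis near a point of order $k$, say $(v_0,\Psi(v_0,s_0))$, I would work in a parallel orthonormal frame along $\gamma_{v_0}$, where $\mathcal{A}(v_0,\cdot)$ becomes the matrix Jacobi solution $\mathbf{A}(s)$ with $\mathbf{A}(0)=0$, $\mathbf{A}'(0)=I$. Uniqueness for the Jacobi equation shows $\mathbf{A}'(s_0)$ is injective on $\ker\mathbf{A}(s_0)$, and the Wronskian identity $\mathbf{A}(s)^{T}\mathbf{A}'(s)=\mathbf{A}'(s)^{T}\mathbf{A}(s)$ --- the infinitesimal version of the fact that the geodesic flow is a symplectomorphism for $\omega_g$ (cf. Lemma~\ref{Xsmooth}) --- shows $\mathbf{A}'(s_0)\ker\mathbf{A}(s_0)$ is transverse to $\operatorname{Im}\mathbf{A}(s_0)$. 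Choosing splittings of domain and target adapted to $\ker\mathbf{A}(s_0)$ and $\operatorname{Im}\mathbf{A}(s_0)$ and extending them, the complementary $(n-1-k)\times(n-1-k)$ block of $\mathcal{A}$ stays invertible near $(v_0,s_0)$ and the Schur complement $\Sigma(v,s)$ is a smooth $k\times k$ matrix with $\Sigma(v_0,s_0)=0$ and $\partial_s\Sigma(v_0,s_0)$ invertible; hence $\det\mathcal{A}(v_0,\cdot)$ vanishes to order exactly $k$ at $s_0$. The same reasoning applies at every conjugate pair of order $k$. The smooth Weierstrass--Malgrange preparation theorem then gives, near $(v_0,s_0)$,
\[
\det\mathcal{A}(v,s)=c(v,s)\,\bigl[(s-s_0)^{k}+a_{k-1}(v)(s-s_0)^{k-1}+\cdots+a_0(v)\bigr],
\]
with $c$ nowhere zero and the $a_j$ smooth and vanishing at $v_0$.

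Now I would bring in the regularity hypothesis. For $v$ near $v_0$ the zeros of $\det\mathcal{A}(v,\cdot)$ in a small fixed $s$-interval have total multiplicity $k$, but each such zero $s_*$ gives a conjugate pair near $(v_0,\Psi(v_0,s_0))$, hence of order $k$ by the definition of $C_R$, hence a zero of order exactly $k$ by the previous paragraph; so there is a single zero $\tau(v)$ of multiplicity $k$, the bracketed polynomial equals $(s-\tau(v))^{k}$, and $\tau(v)=s_0-\tfrac1k a_{k-1}(v)$ is smooth. Therefore $\{\det\mathcal{A}=0\}=\{s=\tau(v)\}$ near $(v_0,s_0)$ is a codimension-one submanifold of $\mathcal{D}$, and pushing forward by the embedding $\Phi$ shows $C_{R,k}$ is an embedded $(2n-1)$-dimensional submanifold of $SM^{int}\times SM^{int}$. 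On $\Phi^{-1}(C_{R,k})$ the map $\mathcal{A}$ has constant rank $n-1-k$, so $\ker\mathcal{A}=K_{(\cdot,\cdot)}$ is a smooth rank-$k$ subbundle; composing with the smooth injective bundle map $X\mapsto(X,D_v\Psi|_{(v,s)}X)$ identifies $J_{R,k}$ with its image, a smooth rank-$k$ vector bundle over $C_{R,k}$.

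I expect the crux to be controlling the order of vanishing of $\det\mathcal{A}$: a priori $\Sigma=0$ is on the order of $k^2$ scalar equations, so without further structure $C_{R,k}$ need not be anywhere near codimension one. It is the combination of the symplectic nondegeneracy, which pins the vanishing order of $\det\mathcal{A}$ along each geodesic through a point of $C_{R,k}$ at exactly $k$, with the regularity hypothesis, which forces nearby conjugate pairs to share the same order and thereby collapses the Weierstrass polynomial to a perfect $k$-th power, that yields a single defining equation. The only remaining technical point is that $\Phi$ is a genuine embedding, which is precisely where the absence of self-intersecting geodesics is used.
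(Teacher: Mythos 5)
Your reduction to the zero set of $\det\mathcal{A}$, the computation (via the Wronskian/Lagrangian transversality) that $\det\mathcal{A}(v_0,\cdot)$ vanishes to order exactly $k$ at $s_0$, and the constant-rank argument for $J_{R,k}$ at the end are all sound, and the last of these is exactly the paper's argument for the bundle statement. The genuine gap is the sentence ``for $v$ near $v_0$ the zeros of $\det\mathcal{A}(v,\cdot)$ in a small fixed $s$-interval have total multiplicity $k$.'' Malgrange preparation gives a Weierstrass polynomial of degree $k$, but only its \emph{complex} roots number $k$; the real zeros can have total multiplicity anywhere between $0$ and $k$. The regularity hypothesis in Definition~\ref{Conjugate pairs} constrains the order of conjugate pairs that actually occur near $(v_0,\tilde v_0)$; it says nothing about their existence for nearby $v$, so it cannot be used to rule out, say for $k=2$, a local model such as $\det\mathcal{A}(v,s)=(s-s_0)^2+|v-v_0|^2$, in which the conjugate locus near $(v_0,\tilde v_0)$ is a single (vacuously regular) point rather than a hypersurface. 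In that scenario your conclusion that the Weierstrass polynomial is the perfect power $(s-\tau(v))^k$ with $\tau$ smooth, and hence the $(2n-1)$-dimensionality of $C_{R,k}$, breaks down. The theorem survives only because such models are not geometrically realizable: conjugate points persist, with multiplicity, under perturbation of the geodesic, and that persistence is nowhere established in your argument.

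To close the gap you need precisely this statement: if the endpoints of $[s_0-\epsilon,s_0+\epsilon]$ are not conjugate to $v_0$ along $\gamma_{v_0}$, then for all $v$ near $v_0$ the number of conjugate points of $\gamma_v$ in that interval, counted with multiplicity, is constant (equal to $k$ here). This follows from the Morse index theorem (the index of the index form is lower semicontinuous in $v$, while index plus nullity is upper semicontinuous, so with non-conjugate endpoints the count is locally constant), or it can be quoted from \cite{Warner65}, whose continuity and regularity results for the conjugate locus are exactly what the paper imports: the paper's proof works upstairs in $TM^{int}$ with the fibre differential of $\exp$, cites Warner's theorem that at a regular conjugate vector of order $k$ the radial derivative of the symmetric function $\sigma_{k-1}$ is nonvanishing (so its zero set, which is the order-$\geq k$ locus there, is a hypersurface by the implicit function theorem), and then transfers to $SM^{int}\times SM^{int}$ by the map $w\mapsto\big(w/|w|_g,\Psi(w/|w|_g,|w|_g)\big)$, whose inverse is written explicitly in terms of $\tau_+$ and is continuous. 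With the persistence statement added (and with your embedding claim for $\Phi$ justified by the analogous explicit continuous inverse rather than by properness), your $SM^{int}\times\mathbb{R}$ version does go through, and amounts to a self-contained rederivation of the portion of Warner's analysis that the paper simply cites.
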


\begin{proof}
The proof of the first point is an extension of methods found in \cite{Warner65}.  Indeed, let us begin by considering the exponential map $\mathrm{exp}(w) = \gamma_{w}(1): T M^{int} \supset \mathcal{D}_{\mathrm{exp}} \rightarrow M^{int}$. Let $D_F \mathrm{exp}$ be the differential of this map restricted to the tangent space of the fibres. Note carefully that we are considering here the exponential map acting on $T M^{int}$, but this differential is only in the fibre variables.

Now suppose that $(v,\tilde{v}) \in C_{R,k}$. By the results in \cite{Warner65} there exist coordinates on a neighborhood $W$ of $w = (\tau_+(v) - \tau_+(\tilde{v}))v \in T M^{int}$ and a neighborhood $V$ of $\pi(\tilde{v}) = \mathrm{exp}(w)$ such that the derivative in the radial direction of the $k-1$st elementary symmetric function applied to the eigenvalues of $D_F \mathrm{exp}$ expressed in these coordinates, which we will label $\sigma_{k-1}$, does not vanish. This implies that $\sigma_{k-1}^{-1}(0)$ corresponds, through the coordinate map, with an embedded $2n-1$ dimensional submanifold of $\mathcal{D}_{exp}\setminus\{0\}$ contained in $W$ with the relative topology. Note that $\sigma_{k-1}^{-1}(0)$ is precisely the set of vectors in $W$ with conjugate points of order $k$ or higher in $V$. Now let $U$ be a neighborhood of $(v,\tilde{v})$ in $S M^{int} \times S M^{int}$ such that all other conjugate pairs in $U$ also have order $k$ and shrink $U$ if necessary so that for all $(w,\tilde{w}) \in U$, $\lambda w \in W$ for some $\lambda \in \mathbb{R}^+$.

Next consider the smooth map
\[
p: \mathcal{D}_{exp} \setminus \{0\} \rightarrow S M^{int} \times S M^{int}
\]
defined by
\[
p(w) = \left ( \frac{w}{|w|_g}, \Psi\left ( \frac{w}{|w|_g}, |w|_g \right ) \right ).
\]
From our construction we have that 
\[
p(\sigma_{k-1}^{-1}(0) \cap p^{-1}(U)) = C_{R,k} \cap U.
\]
Also, $p|_{\sigma_{k-1}^{-1}(0) \cap p^{-1}(U)}$ is invertible with inverse defined on its image by
\[
G(v,\tilde{v}) = (\tau_+(\tilde{v}) - \tau_+(v)) v.
\]
This formula shows that the inverse may be extended to a continuous function on all of $S M^{int} \times S M^{int}$ and therefore that $p|_{\sigma_{k-1}^{-1}(0) \cap p^{-1}(U)}$ is a topological embedding. Since we have already shown that $\sigma_{k-1}^{-1}(0) \cap p^{-1}(U)$ is an embedded submanifold of $\mathcal{D}_{exp} \setminus \{0\}$ we also have that $p|_{\sigma_{k-1}^{-1}(0) \cap p^{-1}(U)}$ is a smooth mapping and thus combining all of this we conclude that $C_{R,k} \cap U$ is an embedded submanifold of $S M^{int} \times S M^{int}$. Since $(v,\tilde{v}) \in C_{R,k}$ was chosen arbitrarily this implies that in fact $C_{R,k}$ is itself an embedded submanifold of $S M^{int} \times S M^{int}$.

For the second point we simply comment that $J_{R,k}$ can be identified as the sub-bundle of another vector bundle $V_{R,k}$ over $C_{R,k}$ obtained by pulling back the vertical sub-bundle $V$ of $T(SM^{int} \times SM^{int})$ ($V = \mathrm{ker}(D\pi) \times \mathrm{ker}(D \pi)$). With this set-up, $J_{R,k}$ is the kernel of the bundle mapping
\[
V_{R,k}\ni (X,\tilde{X}) \mapsto \tilde{X} - D_v \Psi|_{(v,\tau_+(v) - \tau_+(\tilde{v}))} X \in T(SM^{int} \times SM^{int}).
\]
Since this mapping is constant rank $k$ the kernel is a sub-bundle of $V_{R,k}$ as claimed. This completes the proof.
\end{proof}

\noindent Now we introduce a bundle mapping defined on $J_{R,k}$ that will be used to describe the canonical relation of the normal operator.


\begin{lemma} \label{lem:C_v,v}
For each $k$ there is a bundle mapping
\[
\mathcal{C}_k: J_{R,k} \rightarrow T^*(M^{int} \times M^{int}) = T^* M^{int} \times T^* M^{int}
\]
defined by the requirement that for $((v,\tilde{v}), (X,\tilde{X})) \in J_{R,k}$
\[
(\mathfrak{i}_{D i_{SM}|_v X} \omega_g, \mathfrak{i}_{D i_{SM}|_{\tilde{v}} \tilde{X}} \omega_g) = D \pi_{T(M^{int} \times M^{int})}|_{(v,\tilde{v})}^t \mathcal{C}_k \big (((v,\tilde{v}), (X,\tilde{X})) \big ).
\]
\end{lemma}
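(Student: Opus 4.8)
The plan is to show that the prescribed formula actually \emph{defines} $\mathcal{C}_k$ unambiguously, which amounts to two things: first, that the left-hand side $(\mathfrak{i}_{D i_{SM}|_v X}\omega_g, \mathfrak{i}_{D i_{SM}|_{\tilde v}\tilde X}\omega_g)$ genuinely lies in the range of $D\pi_{T(M^{int}\times M^{int})}|_{(v,\tilde v)}^t$, so that a preimage exists; and second, that $D\pi_{T(M^{int}\times M^{int})}|_{(v,\tilde v)}^t$ is injective on the relevant subspace, so the preimage is unique; and finally, that the resulting assignment is smooth and fibrewise linear, hence a bundle map. Since everything is a product of the two factors, it suffices to treat one factor, say the $v$-factor: I must show that $\mathfrak{i}_{D i_{SM}|_v X}\omega_g \in T^*_v SM^{int}$ lies in $\mathrm{Range}(D\pi|_v^t) = V_v$ (the vertical covector bundle introduced in the proof of the previous theorem), given that $X \in K_{(v,\tilde v)} \subset \ker(D\pi|_v)$.

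The key computation is the following. Recall $V_v = \mathrm{Range}(D\pi|_v^t)$, and a covector $\theta\in T^*_v SM^{int}$ lies in $V_v$ if and only if it annihilates a complement of $\ker(D\pi|_v)$; more usefully, by the characterization used earlier (Lemma~\ref{Xsmooth} and the discussion around \eqref{mcO}), I will instead directly verify that $\mathfrak{i}_{D i_{SM}|_v X}\omega_g$ factors through $D\pi|_v$. The point is that $Di_{SM}|_v X$ is a \emph{vertical} vector in $T(TM^{int})$ (because $X\in\ker D\pi|_v$ and $i_{SM}$ is fibre-preserving), and contracting the symplectic form $\omega_g$ with a vertical vector produces a \emph{horizontal} covector — i.e. one that annihilates the vertical tangent space $\ker D\pi_{TM}$. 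This is a standard fact about the canonical symplectic structure pulled back by $\flat_g$: in natural coordinates $(x^j,v^j)$ one has $\omega_g = \mathrm{d}(g_{ij}v^i\,\mathrm{d}x^j)$, and $\mathfrak{i}_{\partial/\partial v^k}\omega_g = g_{kj}\,\mathrm{d}x^j + (\text{terms in }\mathrm{d}x)$, which is a linear combination of the $\mathrm{d}x^j$ only. Hence $\mathfrak{i}_{D i_{SM}|_v X}\omega_g$ annihilates $\ker D\pi_{TM}|_v\cap T_vSM^{int}=\ker D\pi|_v$ and therefore descends to $T^*_{\pi(v)}M^{int}$ through $D\pi|_v$; equivalently it lies in $V_v$. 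The same applies verbatim at $\tilde v$ with $\tilde X$. Uniqueness of the preimage is immediate since $D\pi|_v^t$ (hence $D\pi_{T(M^{int}\times M^{int})}|_{(v,\tilde v)}^t$) is injective, $D\pi|_v$ being surjective — this was already observed in the earlier proofs.

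Having established existence and uniqueness of $\mathcal{C}_k(((v,\tilde v),(X,\tilde X)))$ fibrewise, it remains to note smoothness and the bundle-map property. Linearity in $(X,\tilde X)$ is clear since $X\mapsto \mathfrak{i}_{Di_{SM}|_v X}\omega_g$ is linear and $D\pi_{T(M^{int}\times M^{int})}|_{(v,\tilde v)}^{-t}$ (the inverse on $V$, written $D\pi^{-t}$ in the previous proof) is linear on $V$. Smoothness follows because $J_{R,k}$ is a smooth vector bundle over the embedded submanifold $C_{R,k}$ by Theorem~\ref{Jrktheorem}, the maps $v\mapsto \omega_g|_v$, $(v,X)\mapsto Di_{SM}|_v X$, and the bundle inverse $D\pi^{-t}:V\to T^*M^{int}$ are all smooth, and $\mathcal{C}_k$ is the composition
\[
\mathcal{C}_k\big(((v,\tilde v),(X,\tilde X))\big) = \Big( (D\pi|_v)^{-t}\big(\mathfrak{i}_{Di_{SM}|_v X}\omega_g\big),\ (D\pi|_{\tilde v})^{-t}\big(\mathfrak{i}_{Di_{SM}|_{\tilde v}\tilde X}\omega_g\big)\Big),
\]
a smooth bundle morphism covering the inclusion $C_{R,k}\hookrightarrow M^{int}\times M^{int}$.

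The main obstacle is the first step: verifying carefully that contraction of $\omega_g$ with the vertical vector $Di_{SM}|_v X$ lands in $V_v=\mathrm{Range}(D\pi|_v^t)$. One must be a little careful that $i_{SM}$ is the inclusion $SM^{int}\hookrightarrow TM^{int}$ so that $Di_{SM}|_v X$ is genuinely vertical \emph{as a vector in $TM^{int}$} (the vertical space of $SM^{int}$ is a hyperplane in that of $TM^{int}$), and that the contraction is taken with $\omega_g$ on $TM^{int}$ before restricting; but once the coordinate form of $\omega_g$ above is in hand this is routine. Everything else is bookkeeping with the bundles already constructed.
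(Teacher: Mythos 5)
Your proposal is correct and follows essentially the same route as the paper: both arguments reduce to checking that contraction of $\omega_g$ with the vertical vector $D i_{SM}|_v X$ annihilates $\ker(D\pi_{TM}|_v)$ (the paper phrases this as the vertical space being Lagrangian, verified abstractly via $\flat_g$ and then in the coordinates \eqref{omegag}, exactly the computation you carry out directly), after which existence, uniqueness via injectivity of the transpose, fibrewise linearity and smoothness follow as you describe. The subtlety you flag at the end — that the contraction lives in $T^*_v TM^{int}$ rather than $T^*_v SM^{int}$, so the relevant image is that of $D\pi_{TM}|_v^t$ — is handled the same way in the paper and does not affect the argument.
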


\begin{proof}
First we note that for any $v \in T M^{int}$, the kernel of $D \pi_{TM} |_v$, which is the tangent space of the fibre, is a Lagrangian subspace of $T_v (T M^{int})$ with respect to the symplectic form $\omega_g$. This can be seen from the fact that $\flat_g$ is a bundle isomorphism. Indeed
\[
\pi_{TM} = \pi_{T^*M} \circ \flat_g \Rightarrow D\pi_{TM}|_v =  D \pi_{T^*M}|_{\flat_g(v)} \circ D\flat_g|_v
\]
implies that $D \flat_g|_v$ maps the kernel of $D\pi_{TM}|_v$ into the kernel of $D \pi_{T^*M}|_{\flat_g(v)}$. Since $D \flat_g|_v$ invertible, and these kernels have the same dimension, in fact $D \flat_g|_v$ restricted to the kernel of $D \pi_{TM} |_v$ is an isomorphism onto the kernel of $D \pi_{T^*M} |_{\flat_g(v)}$ which is a Lagrangian subspace with respect to the canonical symplectic form $\omega$. Since $\flat_g$ is a symplectomorphism by definition of $\omega_g$ it follows that $\mathrm{ker}(D \pi_{TM} |_v)$ is a Lagrangian subspace. This point can also be shown using coordinates and the formula \eqref{omegag} below.

Now, for $((v,\tilde{v}), (X,\tilde{X})) \in J_{R,k}$ we have that
\[
D \pi_{T M} |_{v} \circ D i_{SM}|_v \ X = 0\ \mbox{and}\ D \pi_{TM}|_{\tilde{v}} \circ D i_{SM}|_{\tilde{v}}  \ \tilde{X}=0.
\]
The remainder of the proof is the same for the two cases, and so we just consider the first. Indeed, let $(x^i, v^j)$ be natural local coordinates on $TM$ in a neighborhood of $v$. Calculation in these coordinates reveals
\[
\begin{split}
\left [\mathfrak{i}_{D i_{SM}|_v X} \omega_g\right ]\left ( a^i \frac{\partial}{\partial x^i} + b^j \frac{\partial}{\partial v^j} \right ) & = \omega_g \left (D i_{SM}|_v\ X,\ a^i \frac{\partial}{\partial x^i} + b^j \frac{\partial}{\partial v^j} \right )\\
& = \omega_g \left ( D i_{SM}|_v\ X,\ a^i \frac{\partial}{\partial x^i} \right ).
\end{split}
\]
In the second equality we have used the fact that $\mathrm{ker}(D \pi_{TM} |_v)$ is Lagrangian and $D i_{SM}|_v\ X \in \mathrm{ker}(D \pi_{TM} |_v)$. From this we have
\[
\mathfrak{i}_{D i_{SM}|_v X} \omega_g = \omega_g \left ( D i_{SM}|_v\ X, \frac{\partial}{\partial x^i} \right )\ \mathrm{d} x^i \in \mathrm{image}(D \pi_{TM} |_v^t).
\]
This shows that a map is defined by the requirement given in the lemma, and in fact from this last formula we can see that it depends linearly on the fibre variables in $J_{R,k}$ and smoothly on all the variables. Therefore the proof is complete.
\end{proof}

\noindent It is worth noting that the map $\mathcal{C}_k$ can also be expressed in at least two other ways, and in fact has a rather simple expression in coordinates. Indeed, if $(x^i,v^i)$ are natural coordinates on $TM$,
\[
D i_{SM}|_v \ X = a^i \frac{\partial}{\partial v^i},
\]
and
\[
v = \xi^j \frac{\partial}{\partial v^j},
\]
then in coordinates we can calculate
\be{omegag}
\omega_g = \xi^l \frac{\partial g_{il}}{\partial x^j}\ dx^j \wedge dx^i + g_{ij}\ dv^j \wedge dx^i.
\ee
From this we find that if $(\eta,\tilde{\eta}) = \mathcal{C}_k((v,\tilde{v}),(X,\tilde{X}))$, then in the corresponding coordinates $x^i$ on $M$
\[
\eta = a^i g_{ik} dx^k.
\]
With this in mind, we can see that $\eta$ is also obtained by applying the inverse of the vertical lift to $D i_{SM}|_v X$, followed by $\flat_g$. Another equivalent coordinate invariant definition is to take the Jacobi field $J$ defined in \eqref{JX}, and then $\eta=\flat_g \nabla_v J(\pi(v))$.

We will prove one more lemma regarding the structure of conjugate points, which is really the geometric heart of the proof of our main result.

\begin{lemma} \label{etalem}
Let $v$ and $\tilde{v} \in S M^{int}$, $\eta \in T^*_{\pi(v)} M$, $\tilde{\eta} \in T^*_{\pi(\tilde{v})} M$, $\xi \in T^*_{F(v)} \partial_- S M$ be such that $F(v) = F(\tilde{v})$,
\be{Ccond}
D \pi |_v^t \eta = D F|_v^t \xi,\ \mbox{and} \ D \pi |_{\tilde{v}}^t \eta = D F|_{\tilde{v}}^t \xi.
\ee
Then $\eta(v) = 0$, $\tilde{\eta}(\tilde{v}) = 0$. If $v \neq \tilde{v}$ then the pair $(v, \tilde{v})$ is conjugate, and if $(v,\tilde{v}) \in C_{R,k}$ then $(\eta,\tilde{\eta}) \in \mathcal{C}_k(J_{R,k})$. Conversely if $(\eta,\tilde{\eta}) \in \mathcal{C}_k(J_{R,k})$ then there is a $\xi \in T^*_{F(v)} \partial_- S M$ such that \eqref{Ccond} holds.
\end{lemma}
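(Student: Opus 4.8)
Here is how I would approach the proof.

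First I would record the easy facts. Since $F(v)=F(\tilde v)$, the vectors $v,\tilde v$ are tangent to one geodesic $\gamma_v$ and $\tilde v=\Psi(v,s)$ with $s=\tau_+(v)-\tau_+(\tilde v)$. The identities $\eta(v)=0$ and $\tilde\eta(\tilde v)=0$ then come out at once: by Lemma~\ref{Xsmooth} and the computation in the proof of Theorem~\ref{NSOthm} the covector $DF|_v^t\xi$ annihilates $\ddot\gamma_v(0)=X^{\omega_g}_H(v)$, while $D\pi|_v\,\ddot\gamma_v(0)=v$, so \eqref{Ccond} gives $\eta(v)=(D\pi|_v^t\eta)(\ddot\gamma_v(0))=(DF|_v^t\xi)(\ddot\gamma_v(0))=0$, and symmetrically at $\tilde v$.

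Next I would eliminate $\xi$ and reformulate everything in terms of Jacobi fields. Using the flow identity \eqref{Psisemi} with the $s$ above, the first relation in \eqref{Ccond} becomes $D\pi|_v^t\eta=\bigl(D\pi|_{\tilde v}\circ D_v\Psi|_{(v,s)}\bigr)^t\tilde\eta$, and conversely this equality together with $\tilde\eta(\tilde v)=0$ recovers \eqref{Ccond} for the unique $\xi$ with $DF|_{\tilde v}^t\xi=D\pi|_{\tilde v}^t\tilde\eta$ (which exists because $\tilde\eta(\tilde v)=0$). Pairing this equality with an arbitrary $Y\in T_vSM^{int}$ and recognizing $D\pi|_vY$ and $D\pi|_{\tilde v}D_v\Psi|_{(v,s)}Y$ as the values at $\pi(v)$ and at $\pi(\tilde v)$ of the Jacobi variation field of the geodesic variation generated by $Y$ (compare \eqref{JX}--\eqref{gamma}), one sees that \eqref{Ccond} is equivalent to the single statement: $\eta(J(\pi(v)))=\tilde\eta(J(\pi(\tilde v)))$ for every Jacobi field $J$ along $\gamma_v$ orthogonal to $\dot\gamma_v$ (using also $\eta(v)=\tilde\eta(\tilde v)=0$ to handle the radial direction). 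This reformulation is the crux.

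Then I would run this through the symplectic structure. Let $N$ be the $(2n{-}2)$-dimensional space of Jacobi fields along $\gamma_v$ orthogonal to $\dot\gamma_v$, with the constant (along $\gamma_v$) symplectic form $\Omega(J_1,J_2)=\langle\nabla_vJ_1,J_2\rangle-\langle J_1,\nabla_vJ_2\rangle$; the subspaces $L_0$ and $L_1$ of fields vanishing at $\pi(v)$, respectively $\pi(\tilde v)$, are Lagrangian and $L_0\cap L_1$ is the space of conjugate Jacobi fields, of dimension $\dim K_{(v,\tilde v)}$. Since $\eta(v)=0$, I let $J_0$ be the unique element of $N$ with $J_0(\pi(v))=0$ and $\nabla_vJ_0(\pi(v))=\flat_g^{-1}\eta$, so $J_0\in L_0$. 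For $J\in L_1$ the constant value $\Omega(J_0,J)$, computed at $\pi(v)$, equals $\eta(J(\pi(v)))$, which by the master identity equals $\tilde\eta(J(\pi(\tilde v)))=0$; hence $J_0\in L_1^{\perp_\Omega}=L_1$, so $J_0$ is a conjugate Jacobi field, and if $v\neq\tilde v$ and $\xi\neq0$ (so $\eta\neq0$, $J_0\neq0$) this shows $(v,\tilde v)$ is conjugate. Computing $\Omega(J_0,J)$ at both endpoints for arbitrary $J\in N$ and using the master identity gives $\langle\nabla_vJ_0(\pi(\tilde v)),J(\pi(\tilde v))\rangle=\tilde\eta(J(\pi(\tilde v)))$; since $J\mapsto J(\pi(\tilde v))$ maps $N$ onto $\tilde v^\perp$ and both $\flat_g\nabla_vJ_0(\pi(\tilde v))$ and $\tilde\eta$ kill $\tilde v$, I conclude $\flat_g\nabla_vJ_0(\pi(\tilde v))=\tilde\eta$. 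Finally, when $(v,\tilde v)\in C_{R,k}$, the element of $J_{R,k}$ determined by $J_0$ is sent by $\mathcal{C}_k$ to $\bigl(\flat_g\nabla_vJ_0(\pi(v)),\,\flat_g\nabla_vJ_0(\pi(\tilde v))\bigr)=(\eta,\tilde\eta)$, by Lemma~\ref{lem:C_v,v} and the coordinate description following it, which gives $(\eta,\tilde\eta)\in\mathcal{C}_k(J_{R,k})$.

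For the converse I would run the chain backwards: given $(\eta,\tilde\eta)=\mathcal{C}_k((v,\tilde v),(X_0,\tilde X_0))$ with associated conjugate Jacobi field $J_0$ (so $J_0$ vanishes at $\pi(v)$ and $\pi(\tilde v)$ and $\eta,\tilde\eta$ are $\flat_g$ of $\nabla_vJ_0$ at these points), evaluating the constant $\Omega(J_0,J_Y)$ at the two endpoints for $Y\in T_vSM^{int}$ reproduces the master identity $\eta(J_Y(\pi(v)))=\tilde\eta(J_Y(\pi(\tilde v)))$, and hence \eqref{Ccond} as above. The hardest part is the third step: to see that the purely algebraic conditions \eqref{Ccond} force $\eta$ and $\tilde\eta$ to be the two endpoint covariant derivatives of one single conjugate Jacobi field, and to carry this out cleanly one needs the Lagrangian/Wronskian bookkeeping on $N$ together with careful matching of the abstract transpose identities against the Jacobi-field picture \eqref{JX}--\eqref{JXcoord}. (I would also flag that $\xi$ is to be taken nonzero, as is implicit in the FIO framework; otherwise $\eta,\tilde\eta,\xi$ all vanish and the conjugacy assertion fails.)
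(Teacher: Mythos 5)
Your argument is correct, but it runs through a different formalism than the paper's own proof. The paper works entirely in phase space: it lifts \eqref{Ccond} from $T^*SM^{int}$ to $T^*TM^{int}$ (handling the kernel of $Di_{SM}|_v^t$, which produces a correction term $\alpha\, \mathrm{d}(|w|_g^2/2)$ that must be shown to vanish), sets $X=(D\pi_{TM}|_v^t\eta)^{\sharp_{\omega_g}}$, and uses two facts --- the vertical fibres $\mathrm{ker}(D\pi_{TM})$ are Lagrangian for $\omega_g$, and $\tilde\Psi(\cdot,s)$ is a symplectomorphism --- to conclude that $X$ and $D_v\tilde\Psi|_{(v,s)}X$ are both vertical, i.e.\ that $Di_{SM}|_v^{-1}X\in K_{(v,\tilde v)}$. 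You instead transport the problem to the space $N$ of normal Jacobi fields with the Wronskian form $\Omega$, where the Lagrangian subspaces $L_0,L_1$ and the constancy of $\Omega$ play exactly the roles of the vertical fibres and the flow-invariance of $\omega_g$; your $J_0$ (solved from the initial data $J_0(\pi(v))=0$, $\nabla_vJ_0(\pi(v))=\flat_g^{-1}\eta$) is precisely the Jacobi field attached to the paper's $X$ by \eqref{JX}--\eqref{JXcoord}. So the underlying symplectic mechanism is the same, but the bookkeeping lives in different places: the paper stays with the transposes of $DF$, $D\pi$, $D\Psi$ (so the link to the canonical relations, via \eqref{Psisemi}, is immediate, at the cost of the $SM$-versus-$TM$ correction), while your version makes the identification with $\mathcal{C}_k$ through $\eta=\flat_g\nabla_vJ(\pi(v))$ transparent and recovers $\tilde\eta=\flat_g\nabla_vJ_0(\pi(\tilde v))$ explicitly --- at the cost that the ``master identity'' equivalence (which needs every normal Jacobi field to arise from some $Y\in T_vSM^{int}$ and uses $\eta(v)=\tilde\eta(\tilde v)=0$ to absorb the tangential part) and the endpoint analogue of \eqref{JXcoord} at $\pi(\tilde v)$ must be written out; these are routine but conceal the same computation the paper performs with the coefficient $\alpha$. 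Your final flag is also apt: the conjugacy assertion does require $\xi\neq 0$ (equivalently $\eta\neq 0$), an assumption the paper leaves implicit since in the application $\xi$ comes from a canonical relation, and the paper's construction likewise only yields a nonzero element of $K_{(v,\tilde v)}$ when $\eta\neq 0$.
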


\begin{proof}
We first observe that since $F(\dot \gamma_v(s))$ is constant with respect to $s$
\[
DF|_v ( \ddot \gamma_v(0)) = 0
\]
and combining this with the hypotheses we see that
\[
\eta(v) = D \pi |_v^t \eta \big (\ddot \gamma_v(0) \big ) = D F|_v^t \xi\big (\ddot \gamma_v(0)\big ) = \xi\big (DF|_v ( \ddot \gamma_v(0)) \big ) = 0,
\]
and similarly $\tilde{\eta}(\tilde{v}) = 0$. This proves the first assertion.

Next assume that $v \neq \tilde{v}$. Since $F(v) = F(\tilde{v})$ if we set $s = \tau_+(v) - \tau_+(\tilde{v})$, then $\Psi(v,s) = \tilde{v}$. Therefore
\be{DFeq}
F(v) = F(\Psi(v,s)) \Rightarrow DF|_v = DF|_{\tilde{v}} \circ D_v \Psi |_{(v,s)},
\ee
and so for $\xi \in T^*_{F(v)} \partial_- S M$
\[
DF|_v^t \xi = D_v \Psi |_{(v,s)}^t \circ DF|_{\tilde{v}}^t \xi
\]
which implies
\be{etaeq}
D \pi |_v^t \eta = D_v \Psi |_{(v,s)}^t \circ D \pi |_{\tilde{v}}^t \tilde{\eta}.
\ee
Now recall that $i_{SM}: SM^{int} \rightarrow T M^{int}$ is the inclusion mapping. Then we have
\[
\pi = \pi_{TM} \circ i_{SM} \ \mbox{and} \ i_{SM}(\Psi(v,s)) = \tilde{\Psi}(i_{SM}(v),s)
\]
which implies
\be{eq}
D \pi |_v^t = D i_{SM}|_v^t \circ D \pi_{TM}|_{v}^t \ \mbox{and} \ D \Psi |_{(v,s)}^t \circ D i_{SM} |_{\tilde{v}}^t= D i_{SM}|_v^t \circ D_v \tilde{\Psi}|_{(v,s)}^t.
\ee
Therefore, using the fact that the kernel of $D i_{SM}|_v^t$ is the span of the differential of $w \mapsto | w |^2_g$, \eqref{etaeq} implies that
\[
D \pi_{TM} |_{v}^t \eta = D_v \tilde{\Psi} |_{(v,s)}^t \circ D \pi_{TM} |_{\tilde{v}}^t \tilde{\eta} + \alpha\ \mathrm{d} \left ( \frac{|w|^2_g}{2} \right )
\]
for some $\alpha$. Applying the covectors on each side of this equation to a radial vector $r$ based at $v$ we obtain
\[
0 = \tilde{\eta}\left (D \pi_{TM} |_{\tilde{v}} \circ D_v \tilde{\Psi} |_{(v,s)} r \right ) + \alpha.
\]
Since $D \pi_{TM} |_{\tilde{v}} \circ D_v \tilde{\Psi} |_{(v,s)} r$ is parallel to $\tilde{v}$ the first assertion in the lemma implies that $\alpha = 0$ and therefore we conclude that
\be{etaeq2}
D \pi_{TM} |_{v}^t \eta = D_v \tilde{\Psi} |_{(v,s)}^t \circ D \pi_{TM} |_{\tilde{v}}^t \tilde{\eta}.
\ee
We now claim that this implies that $(v,\tilde{v})$ is a conjugate pair.

To prove this claim we will set
\[
X = (D \pi_{TM} |_{v}^t \eta)^{\sharp_{\omega_g}}.
\]
We first show that $X$ is in the range of $D i_{SM}$. To do this, take any $Y \in \mathrm{ker}(D \pi_{TM} |_v)$. Then 
\[
0 = \eta(D \pi_{TM} |_v Y) = D \pi_{TM} |_v^t \eta(Y)  = \omega_g(X,Y).
\]
As part of the proof of Lemma~\ref{lem:C_v,v} we showed that $\mathrm{ker}(D \pi_{TM} |_v)$ is Lagrangian, and so this implies that $X$ is in $\mathrm{ker}(D \pi_{TM}|_v)$. Next take a natural coordinate system $(x^i,v^i)$ on $TM$ in a neighbourhood of $v$ such that $g_{ij} = \delta_{ij}$ at $\pi(v)$, and
\be{coordv}
v = D \pi_{TM} |_{v} \frac{\partial}{\partial x^1}.
\ee
Then since $X \in \mathrm{ker}(D\pi_{TM}|_v)$, in these coordinates
\[
X = a^j \frac{\partial}{\partial v^j}
\]
for some coefficients $a^j$. We thus have
\[
0 = \eta(v) = D \pi_{TM}|_v^t \eta \left (  \frac{\partial}{\partial x^1} \right ) = \omega_g\left (X, \frac{\partial}{\partial x^1}\right ).
\]
Using \eqref{omegag} we finally see that
\[
0 = a^1,
\]
and so $X$ is in the range of $D i_{SM}$. Thus we can define $D i_{SM}|_v^{-1} X \in T SM^{int}$. Now we will show below that
\be{XK}
X \in \mathrm{ker} \Big ( D \pi_{TM} |_{\tilde{v}} \circ D_v \tilde{\Psi} |_{(v,s)} \Big ) \bigcap \mathrm{ker} \Big ( D \pi_{TM} |_v \Big ).
\ee
By \eqref{eq}, after taking transposes, \eqref{XK} implies that $D i_{SM}|_v^{-1} X \in K_{v,\tilde{v}}$ thus proving that $(v, \tilde{v})$ is a conjugate pair.

Let us begin proving \eqref{XK}. We have already shown above that $X \in \mathrm{ker}(D \pi_{TM}|_v)$. To finish, take any $Y \in \mathrm{ker}(D \pi_{TM} |_{\tilde{v}})$. Then similarly to before using also \eqref{etaeq2} in the second equality we have
\be{eq2}
0 = D \pi_{TM} |_{\tilde{v}}^t \tilde{\eta}\ (Y) = D \pi_{TM} |_v^t \eta\ (D_v \tilde{\Psi} |_{(v,s)}^{-1} Y) = \omega_g \left ( X, D_v \tilde{\Psi} |_{(v,s)}^{-1} Y \right ) .
\ee
Finally, since $v \mapsto \tilde{\Psi}(v,s)$ is a symplectomorphism we have
\be{eq3}
0 = \omega_g\left ( D_v \tilde{\Psi} |_{(v,s)}\ X, Y \right ).
\ee
Therefore $D_v \tilde{\Psi} |_{(v,s)} X$ is in the kernel of $D \pi_{TM} |_{\tilde{v}}$ which completes the proof of \eqref{XK}. Note that we have actually shown that $(D i_{SM}|_v^{-1}\ X,D i_{SM}|_v^{-1} \circ D\tilde{\Psi}|_{(v,t)} \ X) \in K_{(v,\tilde{v})}$, and so noticing also that
\[
D \pi_{TM}|_{v}^t \eta = \mathfrak{i}_{X} \omega_g, \quad \mbox{and} \quad D \pi_{TM}|_{\tilde{v}}^t \tilde{\eta} = \mathfrak{i}_{D\tilde{\Psi}|_{(v,t)}  X} \omega_g,
\] 
(the second equality follows from a calculation quite similar to \eqref{eq2} and \eqref{eq3}) we see that if $(v,\tilde{v}) \in C_{R,k}$, then $(\eta,\tilde{\eta}) \in \mathcal{C}_k(J_{R,k})$.

Now we assume that $(\eta,\tilde{\eta}) = \mathcal{C}_k((v,\tilde{v}),(X,\tilde{X}))$ for some $((v,\tilde{v}),(X,\tilde{X})) \in J_{R,k}$. To complete the proof we must show that there exists $\xi \in T^*_{F(v)} \partial_- SM$ such that \eqref{Ccond} is satisfied. We split the proof into several steps starting by showing that $\eta(v) = \tilde{\eta}(\tilde{v}) = 0$.

For this first step we take the same natural coordinate system $(x^i,v^i)$ on $TM$ as described above \eqref{coordv}. Then since $X \in \mathrm{ker}(D\pi|_v)$, in these coordinates
\[
Di_{SM}|_v\ X = \sum_{j=2}^n a^j \frac{\partial}{\partial v^j}
\]
for some coefficients $a^j$. We have
\[
\eta(v) = D\pi|_v^t \eta \left ( \frac{\partial}{\partial x^1} \right ) = \mathfrak{i}_{Di_{SM}|_v X}\omega_g \left ( \frac{\partial}{\partial x^1} \right ) = \omega_g \left (Di_{SM}|_v \ X, \  \frac{\partial}{\partial x^1} \right ).
\]
Using \eqref{omegag} this gives proves that $\eta(v) = 0$. The same argument shows that $\tilde{\eta}(\tilde{v}) = 0$.

Now, from the first paragraph of this proof we see that $\beta \in \mathrm{range}(D F|_v^t)$ if and only if $\beta (\ddot \gamma_v(0)) = 0$. Since $D\pi|_v^t \eta (\ddot \gamma_v(0)) = \eta (v) = 0$, we thus conclude that there exists $\xi \in T^*_{F(v)} \partial_- SM$ such that $D\pi|_v^t \eta =  D F|_v^t \xi$. The same reasoning shows that there exists $\tilde{\xi} \in T^*_{F(v)} \partial_- SM$ such that $D\pi|_{\tilde{v}}^t \tilde{\eta} =  D F|_{\tilde{v}}^t \tilde{\xi}$. It remains to show that $\xi = \tilde{\xi}$.

For this last step, we begin by noting that a lengthy but routine calculation similar in flavor to \eqref{eq2}, and making use of the fact that $\tilde{X} = D_v \Psi|_{(v,s)}\ X$, shows that
\[
D \pi|_v^t \eta =  D_v \Psi|_{(v,s)}^t \circ D \pi|_{\tilde{v}}^t \tilde{\eta} \Rightarrow DF|_v^t\ \xi = D_v \Psi|_{(v,s)}^t \circ D F|_{\tilde{v}}^t \ \tilde{\xi}.
\]
Finally using \eqref{DFeq} (tranposed) this implies
\[
DF|_v^t\ \xi = DF|_v^t\ \tilde{\xi},
\]
and so the injectivity of $DF|_v^t$ completes the proof.
\end{proof}

\noindent We have now reached our main theorem characterizing the structure of the normal operator $\mathcal{N}$. In the statement we use the term {\it local canonical relation} to describe the canonical relations $C_{A_k}$. This means that they are each the image of a constant rank map $\pi_k: \hat{C}_k \rightarrow C_{A_k}$ such that for any $c \in \hat{C}_k$ there is a neighborhood $U$ of $c$ such that $\pi_k(U)$ is a canonical relation. 

\begin{theorem} \label{mainthm}
Suppose that $C_S = \emptyset$. Then the sets
\[
C_{A_k} = \mathcal{C}_k(J_{R,k}) \subset T^*(M^{int} \times M^{int})
\]
are either empty or are local canonical relations. On the level of operators, if $\phi \in C^\infty(SM)$ is greater than or equal to zero everywhere and $\mathcal{N}_\phi$ is defined by \eqref{Ndef}, then we have a decomposition
\[
\mathcal{N}_\phi = \Upsilon + \sum_{k=1}^{n-1} \Bigg ( \sum_{m=1}^{M_k} A_{k,m} \Bigg )
\]
where $\Upsilon$ is a pseudodifferential operator of order $-1$, and for each $k$ either
\[
A_{k,m} \in \mathcal{I}^{-(n-k+1)/2} \ (M^{int} \times M^{int}, C'_{A_{k,m}}; \Omega_{M^{int}\times M^{int}}^{1/2})
\]
where $C_{A_{k,m}} \subset C_{A_k}$ for each $m$, or $M_k = 1$ and $A_{k,1} = 0$ if $C_{A_k} = \emptyset$. Furthermore, $\Upsilon$ is elliptic at every point $\eta \in T^* M^{int}$ such that there exists a $v \in SM^{int}$ with $\eta(v) = 0$ and $\phi(v) \neq 0$. 
\end{theorem}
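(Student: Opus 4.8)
The plan is to analyze the final composition $\mathcal{N}_\phi = \pi_* \circ L_{\phi \otimes \phi}$ from \eqref{N2comp} using the clean composition calculus, just as in the previous two theorems, but now being careful that the composed canonical relation decomposes into several pieces. First I would compute the set-theoretic composition $C_{\pi_*} \circ C_{L}$ using Lemma~\ref{pushpull_lemm} and \eqref{LambdaL}; a point of it is $(\eta, \tilde{\eta}) \in T^*M^{int} \times T^*M^{int}$ for which there exist $v, \tilde{v} \in SM^{int}$ with $\pi(v) = \pi_{T^*M}(\eta)$, $\pi(\tilde v) = \pi_{T^*M}(\tilde\eta)$, $F(v) = F(\tilde v)$, and $D\pi|_v^t\eta = DF|_v^t\xi$, $D\pi|_{\tilde v}^t\tilde\eta = DF|_{\tilde v}^t\xi$ for a common $\xi$. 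Lemma~\ref{etalem} is exactly the geometric tool that identifies this set: it forces $\eta(v) = \tilde\eta(\tilde v) = 0$ always, and splits into the case $v = \tilde v$ (which I claim produces the diagonal $\{(\eta,\eta)\}$, i.e. the canonical relation of a $\Psi$DO, after checking the composition is clean with the correct excess) and the case $v \neq \tilde v$, in which $(v,\tilde v)$ must be conjugate and — since $C_S = \emptyset$ — lies in some $C_{R,k}$, whence $(\eta,\tilde\eta) \in \mathcal{C}_k(J_{R,k}) = C_{A_k}$ by Lemma~\ref{etalem}. So $C_{\pi_*}\circ C_L = \Delta(T^*M^{int}) \cup \bigcup_{k=1}^{n-1} C_{A_k}$.

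Next I would establish that each $C_{A_k}$ is a local canonical relation. Theorem~\ref{Jrktheorem} gives that $C_{R,k}$ is an embedded $(2n-1)$-dimensional submanifold and $J_{R,k}$ a rank-$k$ bundle over it, so $J_{R,k}$ is a smooth manifold of dimension $2n-1+k$. The map $\mathcal{C}_k$ of Lemma~\ref{lem:C_v,v} is smooth; I would argue it is of constant rank (dimension count: the total space maps into $T^*(M^{int}\times M^{int})$, and the fibre directions of $J_{R,k}$ map injectively by the vertical-lift-then-$\flat_g$ description of $\mathcal{C}_k$ noted after Lemma~\ref{lem:C_v,v}, while the $C_{R,k}$ directions contribute a fixed complementary rank), so $C_{A_k} = \mathcal{C}_k(J_{R,k})$ is the image of a constant-rank map; isotropy (hence, with the right dimension, Lagrangian-on-a-canonical-relation locally) follows from the fact that $\mathcal{C}_k$ is built from Jacobi fields / the symplectic form, matching the structure of the flowout canonical relations familiar from \cite{StefanovUhlmann12}. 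This is where I expect to lean on the local, rather than global, nature of the statement: the composed relation need not be a clean-composition output globally because $C_{R,k}$ may wind around, so I would cover $C_{R,k}$ by open sets $U_{k,m}$, $m = 1,\dots,M_k$, over each of which the composition is genuinely clean and produces an honest FIO $A_{k,m} \in \mathcal{I}^{d_k}$, then sum; the diagonal piece, being separated from the others (two distinct points $v\ne\tilde v$ cannot be made to collide without entering $C_S$, which is empty, or running off), can be cut out by a microlocal partition of unity as well, giving $\Upsilon$.

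For the orders: the clean composition calculus gives order $(\text{ord}\,\pi_*) + (\text{ord}\,L_\chi) + e/2$ with half the excess $e$. We have $\pi_* \in \mathcal{I}^{(1-n)/4}$ and $L_\chi \in \mathcal{I}^{-(n+1)/4}$, summing to $-n/2$; for the diagonal component the excess works out to $n-1$ (the fibre $S_xM$ integration), giving order $-n/2 + (n-1)/2 = -1/2$, and then the standard fact that a $\Psi$DO written as an FIO on the diagonal with nonvanishing symbol of order $-1/2$ on the $(2n)$-dimensional phase space is a $\Psi$DO of order $-1$ accounts for $\Upsilon$; for the $k$-th conjugate component the excess is $k-1$ (the dimension of the space of vanishing Jacobi fields minus one), giving order $-n/2 + (k-1)/2 = -(n-k+1)/2$ as claimed. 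The ellipticity of $\Upsilon$: its principal symbol at $\eta \in T^*M^{int}$ is, up to nonvanishing factors coming from Lemma~\ref{pushpull_lemm}, an integral over $\{v \in S_xM : \eta(v) = 0\}$ of $\phi(v)^2$ against a positive density (visible from the explicit formula \eqref{explicitN} restricted to the pseudodifferential part); since $\phi \geq 0$ this is strictly positive as soon as some such $v$ has $\phi(v) \neq 0$, which is exactly the stated condition.

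The main obstacle is the middle step: verifying that over each chart the composition $C_{\pi_*}\circ C_L$ is \emph{clean} with the \emph{correct excess} $e = k-1$, and that the pieces $C_{A_k}$ are embedded isotropic submanifolds of the right dimension. Concretely this requires showing the tangent space to the fibre product equals the intersection of tangent spaces at every point — the same scheme as in Theorems~\ref{NSOthm} and the $L_\chi$ theorem, parametrizing $C$ by a map off $J_{R,k}$ and matching derivatives — but now the non-self-intersection hypothesis and the regularity hypothesis $C_S = \emptyset$ both genuinely enter, and the kernel bookkeeping (which directions in $T(J_{R,k})$ map to zero under $\mathcal{C}_k$, i.e. the constant rank) is where the real work lies; the rest is the bookkeeping of orders and a microlocal partition of unity.
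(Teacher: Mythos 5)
Your plan follows essentially the same route as the paper: compute $C_{\pi_*}\circ C_L$, use Lemma~\ref{etalem} together with $C_S=\emptyset$ to split it into the diagonal and the pieces $C_{A_k}=\mathcal{C}_k(J_{R,k})$, cut the operator near the mutually disjoint sets $C_{R,k}$ (the paper does this with a partition of unity on $SM\times SM$ fed through the cutoff in $L_\chi$, rather than a microlocal partition in phase space, and treats the diagonal as the piece $C_{R,n}=\{(v,v)\}$ so that it is handled by exactly the same excess count as the conjugate pieces), verify cleanness of each piece by the same parametrize-and-match-tangent-spaces scheme, and deal with the possibly disconnected fibres of $\pi_k$ by passing to local representations, which is precisely why the conclusion is stated with the sums over $m$ and only local canonical relations. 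Your identification of the principal symbol of $\Upsilon$ and the ellipticity criterion also matches the paper.

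There is, however, one concrete error: your excess for the diagonal piece. The fibre of the projection $\pi_n:\hat{C}_n\rightarrow\Delta$ over $(\eta,\eta)$ is not all of $S_xM$ but $\{v\in S_xM : \eta(v)=0\}$, an $(n-2)$-sphere, because Lemma~\ref{etalem} forces $\eta(v)=0$; hence the excess is $e=n-2$, and the clean calculus gives directly an element of $\mathcal{I}^{-n/2+(n-2)/2}=\mathcal{I}^{-1}$ associated to the conormal bundle of the diagonal, i.e.\ a pseudodifferential operator of order $-1$. Your claimed excess $n-1$, followed by the ``standard fact'' that an FIO of order $-1/2$ on the diagonal is a pseudodifferential operator of order $-1$, is not correct: with the usual conventions $\mathcal{I}^{m}(M^{int}\times M^{int},\Delta')$ coincides with pseudodifferential operators of order $m$, with no dimensional shift, so an order $-1/2$ Lagrangian distribution on the conormal bundle of the diagonal would be a pseudodifferential operator of order $-1/2$. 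The two mistakes cancel numerically, but the step as written would not survive scrutiny, and it is inconsistent both with your own symbol formula (an integral over the $(n-2)$-dimensional set $\{v:\eta(v)=0\}$) and with your correct excess count $e=k-1$ for the conjugate pieces, of which the diagonal is the case $k=n$ in the paper's convention. Fixing the excess to $n-2$ removes the need for any such ``standard fact'' and the rest of your order bookkeeping stands.
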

\begin{remark} \label{singremark}
In dimension two, there can only be conjugate points of order one and so this theorem covers all possibilities in that case. However in dimension three or higher the generic case includes singular conjugate pairs. Indeed, according to \cite{Arnold73} and \cite{Klok83}, using the notation of \cite{Arnold73}, singularities of type $D_4$ occur generically in the exponential map in three dimensions or higher, and correspond to singular conjugate pairs of order two.
\end{remark}

\begin{proof}
The clean composition calculus does not apply directly to \eqref{N2comp}, but as we will see we can apply a partition of unity so that the calculus applies to each of the separated pieces. Let us begin by looking at the composition of the canonical relations for the operators in \eqref{N2comp} using Lemma \ref{pushpull_lemm} and \eqref{LambdaL}
\be{Ncanon}
\begin{split}
C_{\pi_*} \circ C_{L} = \Big \{ (\eta, \tilde{\eta}) \in T^* M^{int} \times T^* M^{int} \ & : \ \exists\ (v,\tilde{v}) \in SM^{int} \times SM^{int} \\
& \mbox{such that} \ F(v) = F(\tilde{v}), \ \exists \ \xi \in T_{F(v)}^* \partial_- S M^{int} \\
& \mbox{such that} \ D \pi |_v^t \eta = D F|_v^t \xi, \ D \pi |_{\tilde{v}}^t \eta = D F|_{\tilde{v}}^t \xi  \Big \}.
\end{split}
\ee
We will split this composition into different pieces corresponding to different orders of conjugate points. First we note that one piece is the diagonal
\[
\Delta = \{ (\eta, \eta) \in T^* M^{int} \times T^* M^{int} \}
\]
which can be seen to be contained in $C_{\pi_*} \circ C_L$ by taking $\tilde{v} = v$ such that $\eta(v) = 0$ in \eqref{Ncanon}.

Now suppose we have $(\eta, \tilde{\eta}) \in C_{\pi_*} \circ C_L$ with $\eta \neq \tilde{\eta}$. Suppose $v$, $\tilde{v}$ and $\xi$ satisfy the requirements in \eqref{Ncanon}. Then since $D F|_v^t$ and $D \pi|^t_v$ are injective we must have $v \neq \tilde{v}$ and we may use the result of Lemma~\ref{etalem} and the hypothesis $C_S = \emptyset$ to conclude that
\[
C_{\pi_*} \circ C_L = \Delta \cup \left ( \bigcup_{k=1}^{n-1} C_{A_k} \right ).
\]
This union may not be disjoint in general, but we will show that we can still decompose the operator $\mathcal{N}_\phi$ into a sum of FIOs each having a canonical relation contained in one of the sets in the union. 

We will take the convention that $C_{R,n} = \{(v,v) \in S M^{int} \times S M^{int}\}$. By the hypothesis that $C_S = \emptyset$ and the definition of $C_{R,k}$ there exists a collection of open subsets $U_k$ of $S M \times S M$ with disjoint closures such that for each $k =1$ to $n$
\[
C_{R,k} \subset U_k.
\]
Thus it is possible to construct a partition of unity $\{\phi_k\}_{k=0}^n$ on $S M \times S M$ such that $\mathrm{supp}(\phi_k) \subset U_k$ for $k =1$ to $n$ and
\[
\mathrm{supp}(\phi_0) \subset (S M \times S M) \setminus \left ( \bigcup_{k=1}^n C_{R,k} \right ).
\]
Since this is a partition of unity
\[
\mathcal{N}_\phi^\Phi = \sum_{k=0}^n\pi_* \circ L_{(\phi\otimes\phi)\phi_k}.
\]
By our construction, for each $k$ the wavefront set of the operator $L^\Phi_{(\phi\otimes\phi)\phi_k}$ is contained in the set
\[
\begin{split}
C_{L,k}' = \{ (DF|_{v}^t \xi, -\tilde{\eta}) \in T^* SM^{int} \times T^* M^{int} \ & : \ v 
\in S M^{int}, \ \exists\ \tilde{v} \in S M^{int} \\
&\hskip-2.25in \mbox{such that} \ (v,\tilde{v}) \in \mathrm{supp}(\phi_k),\ \pi(\tilde{v}) = \pi_{T^* M}(\tilde{\eta}), \ F(v) = F(\tilde{v}), \ DF|_{\tilde{v}}^t \xi = D\pi |_{\tilde{v}}^t \tilde{\eta} \}.
\end{split}
\]
Therefore, when we microlocalise in the proof of the clean composition calculus (see \cite{HormanderIV}) we obtain smoothing operators except near points in $C_{\pi_*} \circ C_{L,k}$. For $k = 0$ this composition is empty, and so we obtain a smoothing operator in that case and this is included in $\Upsilon$. For $k = n$ the composition is the diagonal $\Delta$, and finally for $k = 1$ to $n-1$ we have  $C_{\pi_*} \circ C_{L,k} = C_{A_k}$. We will now show that each of these compositions is clean.

We'll first consider the composition $C_{\pi_*} \circ C_{L,n} = \Delta$. Based on our constructions so far using the results of Lemma~\ref{etalem} we have the following
\[
\begin{split}
\hat{C}_n  = \left ( C_{\pi_*} \times C_{L,n} \right ) & \bigcap \left ( T^* M^{int} \times \Delta( T^* S M^{int} ) \times T^* M^{int} \right ) \\
&=  \Big \{ (\eta, D \pi |_v^t \eta , D \pi |_v^t \eta, \eta)  \ : \ v \in S M^{int}, \ \eta \in T_{\pi(v)}^*M^{int}, \ \eta(v) = 0 \Big \}.
\end{split}
\]
Using the fact that $\mathcal{O}$ introduced in \eqref{mcO} is an embedded submanifold, we easily see that $\hat{C}_n$ is also embedded submanifold of dimension $3n-2$. Furthermore we can see that the intersection is clean in the following way. Suppose that $\delta = (\delta_1,\delta_2,\delta_3,\delta_4) \in \hat{C}_n$ and
\[
(Y_1,Y_2,Y_3,Y_4) \in T_{\delta} \left (C_{\pi_*} \times C_{L,n} \right ) \bigcap T_{\delta} \left ( T^* M^{int} \times \Delta( T^* S M^{int} ) \times T^* M^{int} \right ) =: D_n
\]
where we are considering this set $D_n$ as a subset of
\[
T_{\delta_1}(T^*M^{int}) \times T_{\delta_2}(T^* S M^{int}) \times T_{\delta_3}(T^* S M^{int}) \times  T_{\delta_4}(T^*M^{int}).
\]
Then we must have $Y_2 = Y_3$, and based on this and an examination of $C_{L,n}$ we find that in fact $Y_3$ and $Y_4$ are determined by $Y_2$. Therefore the dimension of $D_n$ is at most the dimension of $C_{\pi_*}$ which is $3n-1$. However, we can observe that the requirement $\eta(v) = 0$, which holds because of the requirement $DF|_{\tilde{v}}^t \xi = D\pi |_{\tilde{v}}^t \tilde{\eta}$ in the definition of $C_{L,n}$ and the first part of the proof of Lemma \ref{etalem}, eliminates one more dimension and so in fact the dimension of $D_n$ is at most $3n-2$. Since necessarily $T_{\delta} \hat{C}_n \subset D_n$ and the dimension of $T_\delta \hat{C}_n$ is $3n-2$ we conclude that the intersection must be clean with excess $e = n-2$. It is also easy to see that the projection map $\pi_n: \hat{C}_n \rightarrow T^* M^{int} \times T^* M^{int}$ is proper and the fibres of this map are exactly
\[
\pi_n^{-1}(\eta,\eta) = \Big \{ (\eta, D \pi |_v^t \eta ; D \pi |_v^t \eta, \eta)  \ :\ \eta(v) = 0 \Big \}
\]
which are certainly connected. The clean composition calculus thus implies that $(\pi_* \circ L_{(\phi\otimes\phi)\phi_n}$ is a pseudodifferential operator of order $(1-n)/4 - (n+1)/4 + (n-2)/2  = -1$ which is $\Upsilon$, modulo a smoothing operator, in the decomposition given in our theorem. We comment that the excess of the clean intersection corresponds with those $v \in S M^{int}$ such that $\eta(v) = 0$, and so a principal symbol for $\Upsilon$ can be found by integrating the product of symbols for $\pi_*$ and $L_{(\phi \otimes \phi)\phi_n}$ over this set. This agrees with formulae which have been found for a principal symbol of the normal operator in the past for simpler cases, and proves the statement in the theorem on the ellipticity of $\Upsilon$.

Now we turn to the compositions corresponding to conjugate points. If any of the compositions $C_{\pi_*} \circ C_{L,k}$ are empty, then the corresponding operator is smoothing, and so can be absorbed into $\Upsilon$. If the composition is not empty, then as in the last case we have based on Lemma~\ref{etalem} that
\[
\begin{split}
\hat{C}_k :&= \left ( C_{\pi_*} \times C_{L,k}\right ) \bigcap \left ( T^* M^{int} \times \Delta( T^* S M^{int} ) \times T^* M^{int} \right )\\
 & = \Bigg \{ \Big ( \mathcal{C}_k^1((v,\tilde{v}),(X,\tilde{X})), D i_{SM}|_v^t\ \mathfrak{i}_{Di_{SM}|_v X} \omega_g, D i_{SM}|_{\tilde{v}}^t\ \mathfrak{i}_{Di_{SM}|_{\tilde{v}}\tilde{X}} \omega_g, \mathcal{C}_k^2((v,\tilde{v}),(X,\tilde{X})) \Big ) \\ 
 &\hskip1in : \ ((v,\tilde{v}),(X,\tilde{X})) \in J_{R,k} \Bigg \}.
\end{split}
\]
Here $\mathcal{C}_k^1$ and $\mathcal{C}_k^2$ are the first and second components of $\mathcal{C}_k$ and $i_{SM}: SM^{int} \rightarrow T M^{int}$ is the inclusion mapping. This is an embedded $2n+k-1$ dimensional submanifold which is parametrized as shown in the previous formula by $J_{R,k}$ since the mapping
\[
J_{R,k} \ni ((v,\tilde{v}),(X,\tilde{X})) \mapsto \left ( D i_{SM}|_v^t\ \mathfrak{i}_{Di_{SM}|_v X} \omega_g ,\ D i_{SM}|_{\tilde{v}}^t\ \mathfrak{i}_{Di_{SM}|_{\tilde{v}}\tilde{X}} \omega_g \right ) \in T^*(S M^{int} \times SM^{int})
\]
is an embedding. To see that this is an embedding, note that $J_{R,k}$ is a sub-bundle of $V_{r,k}$ defined at the end of the proof of Theorem \ref{Jrktheorem}, and that the mapping extended by the same formula to $V_{R,k}$ is a diffeomorphism onto the bundle $\mathcal{O}$, defined by \eqref{mcO}, pulled back to $C_{R,k}$. To see that it is a diffeomorphism, note that it is a bundle map over the identity, and then check in coordinates that it is invertible in each fibre. 

We can see that the intersection is clean by a dimension counting argument similar to the previous case. As before suppose that $\delta = (\delta_1,\delta_2,\delta_3,\delta_4) \in \hat{C}_k$ and
\[
(Y_1,Y_2,Y_3,Y_4) \in T_{\delta} \left ( \Lambda_{(\pi^*)^t} \times \Lambda_{L_{\phi_k}}\right ) \bigcap T_{\delta} \left ( T^* M^{int} \times \Delta( T^* S M^{int} ) \times T^* M^{int} \right ) =: D_k.
\]
Just as before we have $Y_2$ determines $Y_3$ and $Y_4$. In this case we lose $n-k$ dimensions from $\mathrm{dim}(C_{\pi_*})$ since for fixed $(v, \tilde{v})$ the set
\[
\{ D i_{SM}|_v^t\ \mathfrak{i}_{D_{i_SM}|_v X} \omega_g \ : \ ((v,\tilde{v}),(X,\tilde{X})) \in J_{R,k} \}
\]
is a $k$ dimensional vector space contained in
\[
\{ D \pi |_v^t \eta \ : \ \eta \in T^*_{\pi(v)} M^{int} \}.
\]
Thus, again keeping $v$ fixed, $\eta$ is restricted to be in only a $k$ dimensional subspace of the $n$ dimensional space $T_v M^{int}$. Therefore the dimension of $D_k$ is at most $3n-1 - (n-k) = 2n + k -1$ which is the dimension of $T_{\delta} \hat{C}_k$ and so as before we conclude that the intersection is clean with excess $k-1$. The projections
\[
\pi_k:\hat{C}_k \rightarrow T^* M^{int} \times T^*M^{int}
\]
are proper because the inverse images of sets bounded away from the edge of $T^* M^{int} \times T^*M^{int}$ are bounded away from the edge in the larger space. This implies that $C_{A_k}$ is a local canonical relation for each $k$ (provided $C_{{A}_k} \neq \emptyset$).

To apply the clean composition calculus as given in \cite{HormanderIV} it is necessary that the maps $\pi_k$ have connected fibres. However this is not true in general. Nonetheless by what we have already done the sets $C_{A_k}$, are local canonical relations. Thus the clean composition calculus still may be applied if local representations of the relevant operators are used and the decomposition in the statement can be achieved.
 \end{proof}

\noindent We have attempted to make Theorem~\ref{mainthm} as general as possible and in so doing sacrificed some clarity in the statement. By a slight modification of the proof we could have the following simpler corollary which still covers many cases of interest.

\begin{corollary}\label{order1cor}
Suppose in addition to the hypotheses of Theorem \ref{mainthm} that there are only conjugate pairs of order $1$ in $S M^{int} \times S M^{int}$, and no two points are conjugate along more than one geodesic. Then we have a decomposition
\[
\mathcal{N}_\phi = \Upsilon + A_1
\]
where $\Upsilon$ is a pseudodifferential operator of order $-1$ and 
\[
A_1 \in \mathcal{I}^{-n/2} \left (M^{int} \times M^{int}, C_{A_1}; \Omega^{1/2}_{M^{int} \times M^{int}} \right ).
\]
\end{corollary}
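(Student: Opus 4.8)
The plan is to read off Corollary~\ref{order1cor} from Theorem~\ref{mainthm}, showing that the two extra hypotheses remove, respectively, all terms with $k\ge 2$ and the need to split the $k=1$ term into several Fourier integral operators. Since the hypotheses of Theorem~\ref{mainthm} (in particular $C_S=\emptyset$ and $\phi\ge 0$) are assumed, we start from the decomposition $\mathcal{N}_\phi=\Upsilon+\sum_{k=1}^{n-1}\sum_{m=1}^{M_k}A_{k,m}$. The assumption that every conjugate pair in $SM^{int}\times SM^{int}$ has order one means, by Definition~\ref{Conjugate pairs}, that $C_{R,k}=\emptyset$ for $2\le k\le n-1$; hence by Theorem~\ref{Jrktheorem} the bundles $J_{R,k}$ are empty, the sets $C_{A_k}=\mathcal{C}_k(J_{R,k})$ are empty, and the corresponding operators $A_{k,m}$ are smoothing and may be folded into $\Upsilon$. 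What survives is $k=1$, where the order produced by Theorem~\ref{mainthm} is $-(n-1+1)/2=-n/2$, so that $\mathcal{N}_\phi=\Upsilon+\sum_{m=1}^{M_1}A_{1,m}$ with each $A_{1,m}\in\mathcal{I}^{-n/2}$ and $\bigcup_m C_{A_{1,m}}\subset C_{A_1}$.

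Next I would show that the sum over $m$ collapses to a single Fourier integral operator. Recall from the proof of Theorem~\ref{mainthm} that for $k=1$ the clean intersection $\hat{C}_1$ has excess $k-1=0$, so the projection $\pi_1:\hat{C}_1\to T^*(M^{int}\times M^{int})$ is a proper local diffeomorphism onto the local canonical relation $C_{A_1}$; the only reason one must partition $A_1$ into several pieces in general is that $\pi_1$ may fail to be injective, equivalently that $C_{A_1}$ may fail to be embedded. The two extra hypotheses rule this out. Suppose $\pi_1\big(((v,\tilde v),(X,\tilde X))\big)=\pi_1\big(((w,\tilde w),(Y,\tilde Y))\big)=(\eta,\tilde\eta)$. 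Matching base points forces $\pi(v)=\pi(w)=:x$ and $\pi(\tilde v)=\pi(\tilde w)=:\tilde x$ (and $x\neq\tilde x$ by Assumption~\ref{basicassume}, since otherwise $\gamma_v$ would self-intersect). If $v\neq w$ then, using Assumption~\ref{basicassume} again to exclude $w=-v$, the geodesics $\gamma_v$ and $\gamma_w$ are distinct unparametrised geodesics, each joining $x$ and $\tilde x$ and each carrying a conjugate pair of order one between those points, contradicting the hypothesis that no two points are conjugate along more than one geodesic; hence $v=w$, and then $\tilde v=\tilde w$ since two conjugate points of $x$ along the single geodesic $\gamma_v$ landing at $\tilde x$ would again force a self-intersection. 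Finally, by Lemma~\ref{lem:C_v,v} and the coordinate formula following it, $\mathcal{C}_1$ restricted to the one-dimensional fibre $J_{R,1}|_{(v,\tilde v)}$ is, in suitable coordinates, the injective map $X\mapsto\flat_g\nabla_v J(\pi(v))$ of Lemma~\ref{etalem}, so $(X,\tilde X)=(Y,\tilde Y)$. Thus $\pi_1$ is injective; being a proper local diffeomorphism as well, it is a diffeomorphism onto the closed embedded Lagrangian $C_{A_1}$, a genuine canonical relation. The clean composition calculus of \cite{HormanderIV} then applies globally to $\pi_*\circ L_{(\phi\otimes\phi)\phi_1}$ — using the partition of unity $\{\phi_0,\phi_1,\phi_n\}$ from the proof of Theorem~\ref{mainthm}, which here has only three members since $C_{R,k}=\emptyset$ for $2\le k\le n-1$ — and yields a single operator $A_1\in\mathcal{I}^{-n/2}(M^{int}\times M^{int},\ C_{A_1};\ \Omega^{1/2}_{M^{int}\times M^{int}})$ with no further splitting needed.

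It remains only to collect the remaining pieces: exactly as in the proof of Theorem~\ref{mainthm}, the $\phi_0$-term is smoothing (empty composition) and the $\phi_n$-term is a pseudodifferential operator of order $-1$, and under the present hypotheses there are no other contributions; setting $\Upsilon$ to be the sum of the $\phi_n$-term with all smoothing remainders gives $\mathcal{N}_\phi=\Upsilon+A_1$ as claimed. The step I expect to require the most care is the middle one: turning the geometric hypothesis ``no two points conjugate along more than one geodesic'' into global injectivity of $\pi_1$ — in particular handling a geodesic traversed in both directions via the no-self-intersection assumption, and confirming that $\mathcal{C}_1$ separates points within each fibre — so that the local canonical relation $C_{A_1}$ is genuinely upgraded to an embedded one.
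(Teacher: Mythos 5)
Your overall route is the same as the paper's: specialise Theorem~\ref{mainthm}, observe that the absence of conjugate pairs of order $\geq 2$ removes all terms with $k\geq 2$, and use the hypothesis that no two points are conjugate along more than one geodesic to upgrade the local canonical relation $C_{A_1}$ so that the $k=1$ contribution is a single FIO of order $-(n-1+1)/2=-n/2$. The paper's proof is exactly this, compressed into one sentence (the extra hypothesis makes $\pi_1:\hat C_1\to C_{A_1}$ injective, so $C_{A_1}$ is a canonical relation and the clean calculus applies without further decomposition), so most of your write-up is a legitimate filling-in of that sentence: the vanishing of the $k\geq 2$ terms via Theorem~\ref{Jrktheorem}, the order count, the use of no self-intersections to force $x\neq\tilde x$ and $\tilde v=\tilde w$ once $v=w$, and the injectivity of $\mathcal{C}_1$ on each one-dimensional fibre of $J_{R,1}$ are all fine.

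The one step that does not hold as written is the exclusion of $w=-v$ by Assumption~\ref{basicassume}. Reversing the orientation of a geodesic does not create a self-intersection, and the reversed pair $(-v,-\tilde v)$ is again a regular conjugate pair of order one over the same pair of points $(x,\tilde x)$. Moreover, if $(\eta,\tilde\eta)=\mathcal{C}_1\big((v,\tilde v),(X,\tilde X)\big)$, then taking the same Jacobi field with reversed parametrisation and a sign flip produces $(Y,\tilde Y)\in J_{R,1}$ over $(-v,-\tilde v)$ with $\mathcal{C}_1\big((-v,-\tilde v),(Y,\tilde Y)\big)=(\eta,\tilde\eta)$ (this is consistent with Lemma~\ref{etalem} and the coordinate formula after Lemma~\ref{lem:C_v,v}). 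The two corresponding points of $\hat C_1$ are distinct, since their middle covectors sit over $v$ and over $-v$ respectively, so $\pi_1$ is two-to-one rather than injective — note this is also a gap in the paper's own one-line assertion, not only in your argument, and in particular injectivity of $\pi_1$ is not equivalent to embeddedness of $C_{A_1}$. The conclusion survives because both orientations sweep out the same line in $T^*(M^{int}\times M^{int})$: your (correct) elimination of distinct unparametrised geodesics and of distinct conjugate vectors along a fixed geodesic shows that $C_{A_1}$ is an embedded canonical relation, and one then applies the clean composition calculus to at most two microlocal pieces, one near $(v,\tilde v)$ and one near $(-v,-\tilde v)$; their sum lies in $\mathcal{I}^{-n/2}\left(M^{int}\times M^{int}, C_{A_1}; \Omega^{1/2}_{M^{int}\times M^{int}}\right)$ precisely because $C_{A_1}$ is a genuine embedded canonical relation. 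So either rephrase the middle step as injectivity of $\pi_1$ modulo orientation reversal, or drop the injectivity claim and argue embeddedness of $C_{A_1}$ plus this finite summation.
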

\begin{proof}
The proof is the same as the proof of Theorem~\ref{mainthm}, except we note that the additional hypothesis that no two points are conjugate along more than one geodesic implies that the map $\pi_1:\hat{C}_1 \rightarrow C_{A_1}$ is injective, and so $C_{A_1}$ is a canonical relation. Thus the clean composition calculus applies without further decomposition as was necessary in Theorem~\ref{mainthm}.
\end{proof}

\section{Application to inversion of the normal equation} \label{stability:sec}

We now turn to the problem of inverting $\mathcal{N}_\phi$, or given $g$ solving the normal equation
\be{normal}
\mathcal{N}_\phi[f] = g
\ee
for $f$. To improve the notation going forward let us write
\[
A = \sum_{k=1}^{n-1} \Bigg ( \sum_{m=1}^{M_k} A_{k,l} \Bigg ) 
\]
so that with the decomposition from Theorem~\ref{mainthm} equation \eqref{normal} becomes
\[
\Upsilon[f] + A[f] = g.
\]
One standard approach is to find an appropriate function space on which this equation is of Fredholm type. Indeed, if $A$ is lower order than $\Upsilon$ in some sense, we may expect this is possible. In the setting of Corollary~\ref{order1cor} when the dimension $n$ is at least $3$, $A = A_1$ will be lower order as an FIO, and if $C_{A_1}$ is a local canonical graph then in fact $A$ has appropriate mapping properties. It is not always true that $C_{A_1}$ is a canonical graph, see examples in \cite{StefanovUhlmann12}, and a more detailed study of cases in which $C_{A_1}$ is not a canonical graph may be an interesting direction for future research, but we will say no more about it here.

To move forward we will need to take $(\widetilde{M},\widetilde{g})$ to be a smooth extension with convex boundary of $(M,g)$ also satisfying Assumption \ref{basicassume}. Precisely this means $M \Subset \widetilde{M}^{int}$ and $\widetilde{g}|_{M} = g$. Such an extension can always be found, and related to the extension we have the restriction maps $R: H^s(\Omega_{\widetilde{M}^{int}}^{1/2}) \rightarrow H^s(\Omega_{M^{int}}^{1/2})$ which are continuous for $s \geq 0$. The adjoints of the restriction maps are the extension-by-zero maps $\mathfrak{i}:H^s(\Omega^{1/2}_{M^{int}}) \rightarrow H^s(\Omega^{1/2}_{\widetilde{M}^{int}})$, which are isometric embeddings for $s \leq 0$ by duality. In view of the comments in the previous paragraph we make the following additional assumption about the extension $(\widetilde{M},\widetilde{g})$.
\begin{assumption}\label{Extassume}
Assume the dimension $n$ is at least three, that all conjugate pairs in $S \widetilde{M}^{int} \times S \widetilde{M}^{int}$ are of order $1$, and that $C_{A_1}$ (see Corollary~\ref{order1cor}) is a local canonical graph.
\end{assumption}
\noindent It should be possible to construct an extension satisfying this assumption when the same is true up to the boundary of $S M \times SM$. Thus the assumption is eliminating the possibility that there are singular conjugate pairs, or places where $C_{A_1}$ is not a graph over $\partial M \times \partial M$. 

We will also need to take an intermediate extension $\widetilde{M}_1$ satisfying the same requirements as $\widetilde{M}$ and such that $M \Subset \widetilde{M}_1 \Subset \widetilde{M}$. We will make use of all of the same objects defined on $M$, also defined in the analogous manner on $\widetilde{M}$ and $\widetilde{M}_1$, although we will add a tilde for objects on $\widetilde{M}$, and also a subscript $1$ for objects defined on $\widetilde{M}_1$. Thus for example $\widetilde{F}: S \widetilde{M}^{int} \rightarrow \partial_- S \widetilde{M}$ will be the defined in the same way that $F$ was defined just above \eqref{Fdef}, but with $M$ replaced by $\widetilde{M}$. In the same way we have the mapping $\widetilde{F}_1: S\widetilde{M}_1^{int} \rightarrow \partial_- S \widetilde{M}_1$. 

Next we state the main theorem of this section.

\begin{theorem}\label{contthm}
If Assumption \ref{Extassume} is satisfied, $\phi \in C^\infty(SM)$ is greater than or equal to zero everywhere and for every $\eta \in T^* M^{int}$ there exists a $v \in SM^{int}$ with $\eta(v) = 0$ and $\phi(v) \neq 0$, then the kernel of $\mathcal{X}_\phi$ acting on $L^2(\Omega_{M}^{1/2})$ is at most finite dimensional and is contained in $C^\infty_c(\Omega_{M^{int}}^{1/2})$. Furthermore, if $\mathcal{F} \subset L^2(\Omega_{M}^{1/2})$ is a closed subspace complementary to the kernel of $\mathcal{X}_\phi$ then
\be{stab}
\| \mathcal{X}_\phi[f] \|_{L^2(\Omega_{\partial_- S M}^{1/2})} \sim \| f \|_{H^{-1/2}(\Omega_{M}^{1/2})} 
\ee
for all $f \in \mathcal{F}$.
\end{theorem}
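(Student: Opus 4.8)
The plan is to combine the microlocal structure from Theorem~\ref{mainthm} (in the form of Corollary~\ref{order1cor}, applicable by Assumption~\ref{Extassume}) with a standard elliptic-parametrix-plus-Fredholm argument carried out on the larger manifold $\widetilde{M}$, and then transfer the conclusion back to $M$. First I would work on $\widetilde{M}$, where we have $\widetilde{\mathcal{N}}_\phi = \widetilde{\Upsilon} + \widetilde{A}_1$ with $\widetilde{\Upsilon}$ elliptic of order $-1$ (ellipticity at every $\eta \in T^*\widetilde{M}^{int}$ follows from the hypothesis on $\phi$ and the last statement of Theorem~\ref{mainthm}) and $\widetilde{A}_1 \in \mathcal{I}^{-n/2}$ with local canonical graph. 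Since $n \geq 3$, $\widetilde{A}_1$ is an FIO associated to a canonical graph of order $\le -3/2$, hence it maps $H^{-1/2} \to H^{1/2}$ continuously, i.e.\ it is smoothing of order $1$ relative to $H^{-1/2}$ — one order better than the parametrix of $\widetilde{\Upsilon}$ gives back. Construct a properly supported parametrix $Q \in \Psi^{1}$ for $\widetilde{\Upsilon}$ on $\widetilde{M}_1$, so that $Q\widetilde{\Upsilon} = I + K_1$ with $K_1$ smoothing; then $Q\widetilde{\mathcal{N}}_\phi = I + K_1 + Q\widetilde{A}_1$, and $Q\widetilde{A}_1: H^{-1/2}(\Omega^{1/2}_{\widetilde{M}_1^{int}}) \to H^{1/2}$ is compact as a map into $H^{-1/2}$. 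Hence $\widetilde{\mathcal{N}}_\phi$ is Fredholm $H^{-1/2} \to H^{1/2}$ modulo smoothing, and in particular there is an estimate
\[
\| f \|_{H^{-1/2}} \lesssim \| \widetilde{\mathcal{N}}_\phi[f] \|_{H^{1/2}} + \| f \|_{H^{-N}}
\]
for $f$ supported in $\widetilde{M}_1^{int}$, for any $N$.

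Next I would relate $\widetilde{\mathcal{N}}_\phi$ restricted to functions supported in $M$ to $\mathcal{X}_\phi$. The key identity is that for $f$ supported in $M^{int}$, the value $\mathcal{X}_\phi[f]$ on $\partial_- SM$ determines, and is essentially the same data as, $\widetilde{\mathcal{X}}_\phi[\mathfrak{i} f]$ restricted to the geodesics of $\widetilde{M}$ that actually meet $M$; since $\partial M$ is strictly convex and $f$ is supported inside $M$, integrating $\widetilde{\mathcal{X}}_\phi[\mathfrak{i} f]$ against $\widetilde{\mathcal{X}}_\phi[\mathfrak{i} f]$ gives $\widetilde{\mathcal{N}}_\phi[\mathfrak{i} f] = \mathfrak{i}(\text{something})$ and one gets $\| \widetilde{\mathcal{N}}_\phi[\mathfrak{i} f]\|_{H^{1/2}(\widetilde M)} \lesssim \| \mathcal{X}_\phi[f]\|_{L^2(\partial_- SM)}^{?}$ — more precisely, I would use $\langle \widetilde{\mathcal{N}}_\phi \mathfrak{i} f, \mathfrak{i} f\rangle = \| \mathcal{X}_\phi[f]\|_{L^2}^2$ together with the fact that $\widetilde{A}_1$ does not see the part of geodesics outside $M$ in a way that would ruin this, combined with the $H^{-1/2}\to H^{1/2}$ mapping and duality, to get $\| \mathcal{X}_\phi[f]\|_{L^2}^2 = \langle \widetilde{\mathcal N}_\phi \mathfrak i f, \mathfrak i f\rangle \ge c\|f\|_{H^{-1/2}}^2 - C\|f\|_{H^{-N}}^2$, using the interior ellipticity estimate above and that $\widetilde{\mathcal{N}}_\phi \ge 0$. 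The reverse inequality $\| \mathcal{X}_\phi[f]\|_{L^2} \lesssim \| f\|_{H^{-1/2}}$ is just the (half-density) mapping property of $\mathcal{X}_\phi$ as an FIO of the appropriate order, equivalently $\widetilde{\mathcal{N}}_\phi: H^{-1/2}\to H^{1/2}$ bounded.

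Then the finite-dimensionality and smoothness of the kernel follow formally: $f \in L^2(\Omega^{1/2}_M)$ with $\mathcal{X}_\phi[f] = 0$ gives $\widetilde{\mathcal{N}}_\phi[\mathfrak i f] = 0$, so by the parametrix $f = -(K_1 + Q\widetilde A_1)\mathfrak i f$, which is smoother than $f$; bootstrapping, $f \in C^\infty$, and since $\mathcal X_\phi$ also controls all geodesics through $\partial M$ with $\partial M$ strictly convex, $f$ vanishes near $\partial M$, so $f \in C^\infty_c(\Omega^{1/2}_{M^{int}})$. The kernel is the kernel of a Fredholm-type operator intersected with $L^2$, hence finite dimensional; to get the clean two-sided estimate \eqref{stab} on a complement $\mathcal F$, I would remove the finite-dimensional kernel exactly as in the standard argument: on $\mathcal F$ the weak term $\|f\|_{H^{-N}}$ can be absorbed by a compactness/contradiction argument (if not, extract a sequence with $\|f_j\|_{H^{-1/2}}=1$, $\|\mathcal X_\phi f_j\|\to 0$, pass to a weak limit, use compactness of $H^{-1/2}\hookrightarrow H^{-N}$ to upgrade to strong convergence and land in the kernel, contradicting $\mathcal F \cap \ker = 0$).

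The main obstacle I expect is the transfer step between $\widetilde{M}$ and $M$ — specifically, verifying carefully that $\widetilde A_1$ (the conjugate-point FIO built on $\widetilde M$) interacts correctly with the extension-by-zero $\mathfrak i$ and restriction $R$, so that the estimate on $\widetilde M$ genuinely descends to an estimate phrased in terms of $\mathcal X_\phi$ on $M$ alone, rather than in terms of $\widetilde{\mathcal X}_\phi$ on all of $\widetilde M$. This requires that conjugate points in $\widetilde M$ whose associated canonical relation components touch $M^{int}\times M^{int}$ are still only of order one with graph canonical relation (this is exactly what Assumption~\ref{Extassume} buys us) and that the contribution $\langle \widetilde A_1 \mathfrak i f, \mathfrak i f\rangle$ is genuinely lower order — which is where $n\ge 3$ is essential, since for $n=2$ the FIO would be order $-1$, the same as $\widetilde\Upsilon$, and the argument collapses, consistent with the known instability result \cite{MoStUh}.
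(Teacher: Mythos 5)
Your overall strategy coincides with the paper's: decompose $\mathcal{N}_\phi=\Upsilon+A_1$ via Corollary~\ref{order1cor}, build a properly supported parametrix for the elliptic order $-1$ part, observe that for $n\geq 3$ the conjugate-point FIO has order $\leq -3/2$ on a canonical graph so that the parametrix composed with it gains half a derivative and is compact, deduce a Fredholm estimate $\|f\|_{H^{s}}\lesssim\|\mathcal{N}_\phi f\|_{H^{s+1}}+\|Kf\|_{H^s}$ for $f$ compactly supported in the interior, run this on the extension $\widetilde{M}$ so that $\mathfrak{i}[f]$ is compactly supported in $\widetilde{M}_1$, and close the loop with $\langle\widetilde{\mathcal{N}}_{\widetilde\phi}\mathfrak{i}f,\mathfrak{i}f\rangle=\|\widetilde{\mathcal{X}}_{\widetilde\phi}\mathfrak{i}f\|_{L^2}^2=\|\mathcal{X}_\phi f\|_{L^2}^2$ together with the $L^2\to H^{1/2}$ boundedness of $\mathcal{X}_\phi^t$ and the standard contradiction argument to absorb the weak norm on a complement of the kernel. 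All of that matches the paper's proof.

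There is, however, one genuine gap: your justification that a kernel element $f$ vanishes near $\partial M$, namely that ``$\mathcal{X}_\phi$ controls all geodesics through $\partial M$ with $\partial M$ strictly convex,'' is not a proof. The parametrix/bootstrap machinery, even run on the extension $\widetilde{M}$, only yields that $\mathfrak{i}[f]$ is smooth, i.e.\ that $f\in C^\infty$ and vanishes to infinite order at $\partial M$; it cannot produce the compact support in $M^{int}$ asserted in the theorem. What is needed is the local support theorem of Uhlmann--Vasy \cite{UhlmannVasy12}: in dimension $n\geq3$, if $\mathcal{X}_\phi[f]=0$ then $f$ vanishes on a collar $\partial_\delta M$ of the strictly convex boundary. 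This is a substantive external input (proved with the scattering calculus and a layer-stripping argument, not with the decomposition of Theorem~\ref{mainthm}), and the paper invokes it as the very first step of the proof -- it is also what allows the kernel analysis to be carried out with cutoffs supported in $M^{int}$. Everything else in your outline is sound, including the observation that $n\geq3$ is exactly what makes $A_1$ subordinate to $\Upsilon$, consistent with the two-dimensional instability of \cite{MoStUh}.
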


We will provide the proof of this theorem, but first we make a remark concerning its significance. The equation \eqref{stab}, which shows stability and continuity of $\mathcal{X}_\phi$ from $\mathcal{F}$ with the $H^{-1/2}$ norm to $L^2$, is intended to match with the hypotheses required for convergence of regularizations in Hilbert scales as originally established in \cite{Natterer}. Indeed, using the result of \cite{Natterer} together with Theorem \ref{contthm} we have the following corollary which shows the convergence rate of Tikhonov regularised solutions to the true solution of the problem assuming that $\mathcal{X}_\phi$ is injective.

\begin{corollary} \label{Tikhonovcorollary}
Suppose that assumption \ref{Extassume} is satisfied and $\mathcal{X}_\phi$ is injective on $L^2(\Omega_M^{1/2})$. Let $q > 0$ and $p \geq (q-1/2)/2$ be given, and suppose $f_0 \in H^{q}(\Omega_M^{1/2})$, and $g \in L^2(\Omega_{\partial_- SM}^{1/2})$ satisfies
\[
\|g - \mathcal{X}_\phi[f_0] \|_{L^2(\Omega_{\partial_- SM}^{1/2})} \leq \epsilon.
\]
Then for $\omega$ appropriately chosen the unique solution of the Tikhonov regularised problem
\[
\mathrm{arg \ min}_{f \in H^s(\Omega_{\partial_- S M}^{1/2})} \Big \{ \| \mathcal{X}_\phi[f] - g_0 \|_{L^2(\Omega_{\partial_- SM}^{1/2})} + \omega \| f \|_{H^{p}(\Omega_M^{1/2})} \Big \}
\]
satisfies
\[
\|f - f_0 \|_{L^2(\Omega_M^{1/2})} \lesssim \epsilon^{\frac{2q}{1 + 2q}} \|f_0\|^{\frac{1}{1+2q}}_{H^{q}(\Omega_M^{1/2})}
\]
\end{corollary}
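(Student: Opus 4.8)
The plan is to obtain Corollary~\ref{Tikhonovcorollary} as an immediate application of the general convergence theorem for Tikhonov regularisation in Hilbert scales proved in \cite{Natterer}, with Theorem~\ref{contthm} supplying the single analytic ingredient that is needed: that $\mathcal{X}_\phi$ is \emph{exactly} smoothing of order $1/2$ relative to the Sobolev scale on $M$.

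First I would observe that, since $\mathcal{X}_\phi$ is assumed injective on $L^2(\Omega_M^{1/2})$, its kernel is $\{0\}$, so Theorem~\ref{contthm} applies with $\mathcal{F} = L^2(\Omega_M^{1/2})$ and yields the two-sided bound
\[
\| \mathcal{X}_\phi f \|_{L^2(\Omega_{\partial_- S M}^{1/2})} \sim \| f \|_{H^{-1/2}(\Omega_M^{1/2})}, \qquad f \in L^2(\Omega_M^{1/2}).
\]
In the framework of \cite{Natterer} this is precisely the statement that, with $Y = L^2(\Omega_{\partial_- S M}^{1/2})$ and with a Hilbert scale $(X_s)$ realised by the Sobolev spaces $\{H^s(\Omega_M^{1/2})\}$ (so $X_0 = L^2(\Omega_M^{1/2})$), one has $m\|f\|_{X_{-a}} \le \|\mathcal{X}_\phi f\|_Y \le M\|f\|_{X_{-a}}$ with ill-posedness index $a = 1/2$. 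To make $(X_s)$ a genuine Hilbert scale I would generate it by a fixed strictly positive self-adjoint operator; the cleanest choice uses the extension $(\widetilde M,\widetilde g)$ from Assumption~\ref{Extassume} (or the double of $M$), taking $\Lambda = (1 - \Delta_{\widetilde g})^{1/2}$ there and transferring to $M$ via the restriction and extension-by-zero maps, so that $\|\cdot\|_{X_s}$ is equivalent to the Sobolev norm of order $s$ on $M$ on the range of indices that enters.

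With the scale in place the remaining hypotheses of \cite{Natterer} are read off from the statement: the source condition $f_0 \in H^q(\Omega_M^{1/2})$ is $f_0 \in X_q$; the data condition is $\|g - \mathcal{X}_\phi f_0\|_Y \le \epsilon$; and the functional being minimised is the Hilbert-scale Tikhonov functional with penalty index $p$. The admissibility requirement in \cite{Natterer} under which the order-optimal rate is attained reads $0 \le q \le 2p + a$, i.e. $p \ge (q-a)/2$, which for $a = 1/2$ is exactly the hypothesis $p \ge (q-1/2)/2$. Choosing $\omega = \omega(\epsilon)$ by the a priori parameter rule of \cite{Natterer} (of the form $\omega \sim (\epsilon/\|f_0\|_{X_q})^{2a/(q+a)}$ up to the usual modification), that reference then gives a unique minimiser $f$ satisfying
\[
\|f - f_0\|_{L^2(\Omega_M^{1/2})} \lesssim \epsilon^{\frac{q}{q+a}}\, \|f_0\|_{H^q(\Omega_M^{1/2})}^{\frac{a}{q+a}},
\]
and substituting $a = \tfrac12$ gives exactly $\epsilon^{\frac{2q}{1+2q}} \|f_0\|_{H^q(\Omega_M^{1/2})}^{\frac{1}{1+2q}}$, which is the claim.

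The point requiring real care — and the one I expect to be the main obstacle — is the identification of the abstract Hilbert scale $(X_s)$ with the Sobolev spaces $H^s(\Omega_M^{1/2})$ uniformly in $s$ across the half-integer index $s = -1/2$ appearing in \eqref{stab}: on a manifold with boundary the spaces $H^s(M)$, $\widetilde H^s(M)$, and the $L^2(M)$-duals of the positive-order spaces genuinely differ at half-integers, and the interpolation identities $[X_{s_0},X_{s_1}]_\theta = X_{s_\theta}$ can fail there. Passing to the closed manifold $\widetilde M$, where the scale generated by $(1-\Delta_{\widetilde g})^{1/2}$ satisfies all of these identities without exception, and then carrying the two-sided estimate of Theorem~\ref{contthm} back to $M$ through the (bounded, and for $s\le 0$ isometric) extension-by-zero maps, disposes of this bookkeeping; once it is settled, the corollary follows verbatim from \cite{Natterer}.
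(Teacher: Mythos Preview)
Your proposal is correct and follows exactly the approach the paper takes: the paper does not give a separate proof of Corollary~\ref{Tikhonovcorollary} at all, but simply states in the paragraph preceding it that the result follows from combining Theorem~\ref{contthm} with the convergence theorem of \cite{Natterer}. Your write-up supplies precisely the details implicit in that sentence --- identifying the ill-posedness index $a=1/2$ from \eqref{stab}, matching the condition $p\ge(q-1/2)/2$ to Natterer's admissibility requirement $q\le 2p+a$, and reading off the rate --- and in fact goes further than the paper by flagging the bookkeeping issue of realising the Sobolev spaces on a manifold with boundary as a genuine Hilbert scale across the half-integer index.
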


\noindent If the geodesic X-ray transform arises as the linearization of a nonlinear problem, as it might in travel time tomography, we also comment that the condition \eqref{stab} plays an important role in the analysis of convergence of regularisation methods for the nonlinear problem (see for example \cite{Neubauer,Qi}).

We also comment that the use of $\mathcal{X}_\phi$ rather than $\mathcal{X}$ allows the result to cover also the weighted geodesic ray transform including the limited data case incorporated by setting $\phi$ to be a cut-off function.

We now turn to the proof of Theorem \ref{contthm} which will be broken into a series of lemmas. The first lemma concerns the continuity of $\mathcal{N}_\phi$.

\begin{lemma}\label{Ncont}
Let Assumption~\ref{Extassume} be satisfied and suppose that $\phi \in C^\infty(S M)$. Then for $-1\leq s \leq 0$
\[
\mathcal{N}_\phi: H^s(\Omega^{1/2}_M) \rightarrow H^{s+1} (\Omega^{1/2}_M)
\]
continuously.
\end{lemma}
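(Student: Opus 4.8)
The plan is to use the decomposition from Theorem~\ref{mainthm}, namely $\mathcal{N}_\phi = \Upsilon + A$ with $\Upsilon \in \Psi^{-1}$ and $A = \sum_{k,m} A_{k,m}$ a finite sum of FIOs, and to bound each piece separately. For the pseudodifferential part, continuity of $\Upsilon : H^s \to H^{s+1}$ for all $s$ is immediate from the standard mapping properties of pseudodifferential operators of order $-1$ on compact manifolds (after reducing to operators on $\widetilde{M}$ via the extension maps $\mathfrak{i}$ and $R$ introduced just before Assumption~\ref{Extassume}, so that we may work with properly supported operators on a manifold without boundary). The substantive point is therefore the FIO part $A$. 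Here I would invoke Assumption~\ref{Extassume}: all conjugate pairs in $S\widetilde{M}^{int}\times S\widetilde{M}^{int}$ have order $1$, so by Corollary~\ref{order1cor} the sum collapses to a single term $A = A_1 \in \mathcal{I}^{-n/2}(M^{int}\times M^{int}, C_{A_1}; \Omega^{1/2})$, and $C_{A_1}$ is a local canonical graph.

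The key estimate is then the $L^2$-Sobolev continuity of a Fourier integral operator of order $-n/2$ associated to a local canonical graph. First I would note that an FIO of order $\mu$ whose canonical relation is (locally) the graph of a canonical transformation maps $H^s_{comp} \to H^{s-\mu}_{loc}$ continuously, by the Egorov-type / Hörmander composition results (reduce to the case of a pseudodifferential operator by composing with an elliptic FIO quantizing the inverse graph, or cite the standard FIO continuity theorem for canonical graphs directly, see \cite{HormanderIV}). Since $A_1$ has order $-n/2$, it maps $H^s \to H^{s+n/2}$; because $n \geq 3$ by Assumption~\ref{Extassume}, we have $n/2 \geq 3/2 > 1$, so in particular $A_1 : H^s \to H^{s+1}$ continuously, with plenty of room to spare. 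Combining with the $\Upsilon$ bound gives $\mathcal{N}_\phi : H^s(\Omega^{1/2}_M) \to H^{s+1}(\Omega^{1/2}_M)$ for all $s$, in particular for $-1 \leq s \leq 0$; the restriction to this range is presumably only recorded because it is all that is needed downstream and because the extension/restriction bookkeeping is cleanest for $s$ near $0$ (the extension-by-zero maps are isometric for $s \leq 0$, and one wants $s+1 \geq 0$ so that restriction $R$ is continuous).

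A couple of technical points need care. One is the reduction to a closed manifold: $\mathcal{N}_\phi$ as written acts between half-density spaces on $M^{int}$, and to apply the clean calculus cleanly one works on the extension $\widetilde{M}$; one should check that $\mathfrak{i}$ and $R$ are continuous on the relevant Sobolev scales (stated in the excerpt: $R$ continuous for $s \geq 0$, $\mathfrak{i}$ isometric for $s \leq 0$), which is exactly why the range $-1 \leq s \leq 0$ together with $s+1 \in [0,1]$ is convenient, and interpolate if an intermediate $s$ is wanted. The other is that, strictly, Theorem~\ref{mainthm} produces operators modulo smoothing terms and the canonical relations are only \emph{local} canonical graphs; I would handle this with a microlocal partition of unity subordinate to the cover on which $C_{A_1}$ is a genuine graph, apply the canonical-graph continuity estimate on each piece, and sum the finitely many contributions, absorbing the smoothing remainders into the (already more than sufficiently regularizing) $\Upsilon$ term.

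The main obstacle I anticipate is purely the FIO continuity input: one must be confident that ``order $-n/2$ FIO with local canonical graph'' really does give a gain of $n/2$ derivatives in $L^2$-based Sobolev spaces (not merely the $n/2 - |\text{something}|/2$ one would get for a general non-graph canonical relation via the Hörmander $L^2$-boundedness theorem for order $-\dim/4$). Once one commits to the canonical-graph hypothesis from Assumption~\ref{Extassume}, this is standard, so the real work is just the organizational reduction to a closed manifold and the partition-of-unity argument, both of which are routine.
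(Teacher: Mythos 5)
Your proposal is correct and follows essentially the same route as the paper: apply the decomposition of Corollary~\ref{order1cor} on the extension $\widetilde{M}$ (where Assumption~\ref{Extassume} gives $\Upsilon\in\Psi^{-1}$ plus $A_1$ of order $-n/2$ with local canonical graph, hence a gain of $n/2>1$ derivatives), and then transfer to $M$ via the extension-by-zero and restriction maps, which is exactly what forces the range $-1\leq s\leq 0$. The only detail the paper makes explicit that you leave implicit is the identity $R\circ\chi^m\circ\widetilde{\mathcal{N}}_{\widetilde{\phi}}\circ\chi^m\circ\mathfrak{i}[f]=\mathcal{N}_\phi[f]$ for $f$ supported in $M$, which follows from the explicit formula \eqref{explicitN}.
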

\begin{proof}
Suppose that $M_1$ is an open set such that $M \Subset M_1 \Subset \widetilde{M}^{int}$, and take $\chi \in C_c^\infty(M_1)$ such that $\chi(x) = 1$ for $x \in M$. Also, let $\widetilde{\phi}$ be a smooth extension of $\phi$ to $\widetilde{M}$. By Corollary \ref{order1cor} using assumption~\ref{Extassume} and the continuity properties of FIOs we have
\be{N1cont}
\chi^m \circ \widetilde{\mathcal{N}}_{\widetilde{\phi}} \circ \chi^m: H^s(\Omega^{1/2}_{\widetilde{M}}) \rightarrow H^{s+1}(\Omega^{1/2}_{\widetilde{M}}).
\ee
We also note that from the equation \eqref{explicitN} for $\mathcal{N}_\phi$, we can see that for $f$ supported in $M$
\[
R \circ \chi^m \circ \widetilde{\mathcal{N}}_{\widetilde{\phi}} \circ \chi^m \circ \mathfrak{i}[f] = \mathcal{N}_\phi[f].
\]
Thus we find that for $f \in C^\infty(M)$
\[
\begin{split}
\|\mathcal{N}_\phi[f] \|_{H^{s+1}(\Omega^{1/2}_M)} & = \| R \circ \chi^m \circ \widetilde{N}_{\widetilde{\phi}} \circ \chi^m \circ \mathfrak{i}[f] \|_{H^{s+1}(\Omega^{1/2}_M)}\\
& \leq C \| \chi^m \circ \widetilde{N}_{\widetilde{\phi}} \circ \chi^m \circ \mathfrak{i}[f] \|_{H^{s+1}(\Omega^{1/2}_{\widetilde{M}})}\\
& \leq C \| \mathfrak{i}[f] \|_{H^{s}(\Omega^{1/2}_{\widetilde{M}})}\\
& \leq C \| f \|_{H^{s}(\Omega^{1/2}_M)}
\end{split}
\]
where the constant $C>0$ may change at each step. The first inequality follows from the continuity of $R$ on $H^{s+1}(\Omega^{1/2}_{\widetilde{M}})$ when $s \geq -1$, the second from \eqref{N1cont}, and the third from the continuity of $\mathfrak{i}$ on $H^{s}(\Omega^{1/2}_M)$ when $s \leq 0$. This completes the proof.
\end{proof}

\noindent Next we study the continuity of $\mathcal{X}_\phi$ and $\mathcal{X}_\phi^t$. 

\begin{lemma} \label{Xcont}
Make the same assumptions as in Lemma~\ref{Ncont}. Then
\[
\mathcal{X}_\phi: H^{-1/2}(\Omega^{1/2}_M) \rightarrow L^2(\Omega_{\partial_- SM}^{1/2}),
\]
and
\[
\mathcal{X}^t_\phi: L^2(\Omega_{\partial_- SM}^{1/2}) \rightarrow H^{1/2}(\Omega^{1/2}_M)
\]
continuously
\end{lemma}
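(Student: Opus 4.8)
The plan is to read off both statements from the factorization $\mathcal{N}_\phi=\mathcal{X}_\phi^t\circ\mathcal{X}_\phi$ together with Lemma~\ref{Ncont}, by a standard duality and density argument. First I would fix $f\in C_c^\infty(\Omega^{1/2}_{M^{int}})$ and use that $\mathcal{X}_\phi^t=\pi_*\circ\phi^m\circ F^*$ is, by construction (Definition~\ref{pushpull_def}), the transpose of $\mathcal{X}_\phi$ with respect to the $L^2$ pairings of half densities on $\Omega^{1/2}_M$ and $\Omega^{1/2}_{\partial_-SM}$ (and that $\mathcal{X}_\phi$ has a real Schwartz kernel, so transpose and adjoint agree). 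This gives
\[
\|\mathcal{X}_\phi[f]\|_{L^2(\Omega^{1/2}_{\partial_-SM})}^2=\langle \mathcal{X}_\phi^t\mathcal{X}_\phi[f],f\rangle_{L^2(\Omega^{1/2}_M)}=\langle \mathcal{N}_\phi[f],f\rangle_{L^2(\Omega^{1/2}_M)}.
\]
Since half densities pair to densities, the $L^2(\Omega^{1/2}_M)$ pairing extends to the duality pairing between $H^{1/2}(\Omega^{1/2}_M)$ and $H^{-1/2}(\Omega^{1/2}_M)$ without reference to any metric, so by Lemma~\ref{Ncont} (which applies since we are under Assumption~\ref{Extassume})
\[
\|\mathcal{X}_\phi[f]\|_{L^2(\Omega^{1/2}_{\partial_-SM})}^2\le \|\mathcal{N}_\phi[f]\|_{H^{1/2}(\Omega^{1/2}_M)}\,\|f\|_{H^{-1/2}(\Omega^{1/2}_M)}\le C\,\|f\|_{H^{-1/2}(\Omega^{1/2}_M)}^2 .
\]

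Next I would invoke the density of $C_c^\infty(\Omega^{1/2}_{M^{int}})$ in $H^{-1/2}(\Omega^{1/2}_M)$: for $|s|<1/2$ the extension-by-zero space on the closed manifold-with-boundary $M$ agrees with $H^s(\Omega^{1/2}_M)$ and smooth sections supported in the interior are dense, so the estimate above shows that $\mathcal{X}_\phi$ extends uniquely to a bounded operator $H^{-1/2}(\Omega^{1/2}_M)\to L^2(\Omega^{1/2}_{\partial_-SM})$, which is the first assertion. The second assertion then follows by taking adjoints: the Hilbert space adjoint of $\mathcal{X}_\phi\colon H^{-1/2}(\Omega^{1/2}_M)\to L^2(\Omega^{1/2}_{\partial_-SM})$ is bounded from $L^2(\Omega^{1/2}_{\partial_-SM})$ into $\big(H^{-1/2}(\Omega^{1/2}_M)\big)'=H^{1/2}(\Omega^{1/2}_M)$ (again via the half-density pairing), and on test sections it coincides with $\mathcal{X}_\phi^t=\pi_*\circ\phi^m\circ F^*$ precisely because the latter was defined as the transpose for exactly these pairings; hence $\mathcal{X}_\phi^t\colon L^2(\Omega^{1/2}_{\partial_-SM})\to H^{1/2}(\Omega^{1/2}_M)$ continuously.

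I do not expect a genuine obstacle here: the only points needing a word of justification are the identification $\big(H^{-1/2}(\Omega^{1/2}_M)\big)'=H^{1/2}(\Omega^{1/2}_M)$ through the canonical bilinear half-density pairing, and the density of $C_c^\infty$ of the interior in $H^{-1/2}(\Omega^{1/2}_M)$, both of which are standard facts valid for Sobolev orders in $(-1/2,1/2)$. All of the real analytic content has already been placed in Lemma~\ref{Ncont}, so this lemma is essentially a bookkeeping consequence of it.
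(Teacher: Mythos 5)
Your proposal is correct and follows essentially the same route as the paper: the identity $\|\mathcal{X}_\phi[f]\|_{L^2}^2=\langle \mathcal{N}_\phi[f],f\rangle$, the $H^{1/2}$--$H^{-1/2}$ duality estimate via Lemma~\ref{Ncont}, and a duality/transpose argument for $\mathcal{X}_\phi^t$. The extra remarks on density of interior test sections and the identification of the dual space are just bookkeeping that the paper leaves implicit.
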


\begin{proof}
Let $f \in C^\infty \left (\Omega^{1/2}_{M} \right )$. Then using Lemma \ref{Ncont}
\[
\begin{split}
\| \mathcal{X}_\phi[f] \|_{L^2(\Omega_{\partial_- SM}^{1/2})}^2 & = \langle \mathcal{N}_\phi[f], f \rangle_{L^2(\Omega^{1/2}_M)}\\
& \leq C \| \mathcal{N}_\phi[f] \|_{H^{1/2}(\Omega^{1/2}_M)} \| f \|_{H^{-1/2}(\Omega^{1/2}_M)}\\
& \leq C \| f \|_{H^{-1/2}(\Omega^{1/2}_M)}^2.
\end{split} 
\]
As usual, the constant $C$ may change between steps. This proves the first portion of the result. The second part follows by duality. Indeed, for $f \in C^\infty(\Omega^{1/2}_{M})$ and $h \in C^\infty(\Omega^{1/2}_{\overline{\partial_- SM}})$
\[
\begin{split}
\langle \mathcal{X}_\phi^t[h], f \rangle_{L^2(\Omega^{1/2}_M)} & = \langle h, \mathcal{X}_\phi[f] \rangle_{L^2(\Omega^{1/2}_{\partial_- SM})}\\
&  \leq \|h\|_{L^2(\Omega_{\partial_- SM}^{1/2})} \| \mathcal{X}_\phi[f] \|_{L^2(\Omega_{\partial_- SM}^{1/2})} \\
& \leq C \|h\|_{L^2(\Omega_{\partial_- SM}^{1/2})} \| f \|_{H^{-1/2}(M)}.
\end{split}
\]
Taking the supremum over $f$ such that $\| f \|_{H^{-1/2}(M)} = 1$ we obtain the result.
\end{proof}

\noindent We now begin the proof of Theorem \ref{contthm}.

\begin{proof}[Proof of Theorem \ref{contthm}]
Let $\partial_\delta M = \{ x \in M \ : \ \mathrm{dist}(x,\partial M) < \delta \}$. Then by the result of \cite{UhlmannVasy12}, since $\partial M$ is strictly convex and compact, there exists a $\delta >0$ such that if $f \in \mathrm{Ker}(\mathcal{X}_\phi)$ acting on $L^2(\Omega^{1/2}_{M})$, then $f |_{\partial_\delta M} = 0$. Now introduce open sets $M_1$, $M_2$, and $M_3$ such that
\[
\partial_\delta M^c \subset M_1 \Subset M_2 \Subset M_3 \Subset M^{int}.
\]
Also, let $\chi_1 \in C_c^\infty(M_2)$ and $\chi_2 \in C_c^\infty(M_3)$ be such that $\chi_1(x) = 1$ for $x \in M_1$, and $\chi_2(x) = 1$ for $x \in M_2$.

By Corollary \ref{order1cor} and Assumption \ref{Extassume} we have a decomposition
\[
\mathcal{N}_\phi = \Upsilon + A
\]
where $\Upsilon$ is an elliptic pseudodifferential operator of order $-1$ and $A$ is an FIO of order $-n/2$ whose canonical relation is a graph.

Since $\Upsilon$ is elliptic it is possible to construct a parametrix $\Upsilon^{-1}$ which is a properly supported pseudodifferential operator of order $1$. This means in particular that
\[
\Upsilon^{-1} \circ \Upsilon = \mathrm{Id} + \mathcal{R}
\]
where $\mathcal{R}$ is an operator with smooth Schwartz kernel. Furthermore, we can construct $\Upsilon^{-1}$ such that its Schwartz kernel is supported sufficiently close to the diagonal so that 
\[
\chi_1^m \circ\Upsilon^{-1} = \chi_1^m \circ \Upsilon^{-1}\circ (\chi_2^m)^2
\]
By the hypotheses, constructions above, and mapping properties of FIOs
\[
\chi_2^m \circ A \circ \chi_1^m: H^s(\Omega_{M}^{1/2}) \rightarrow H^{s+3/2}(\Omega^{1/2}_{M}) \quad \mbox{and} \quad \chi_1^m \circ \Upsilon^{-1} \circ \chi_2^m: H^{s+3/2}(\Omega_{M}^{1/2}) \rightarrow H^{s+1/2}(\Omega_{M}^{1/2})
\]
are compact for any $s$. Putting these together and using Sobolev embedding we find that
\[
\chi_1^m \circ \Upsilon^{-1} \circ (\chi_2^m)^2 \circ A \circ \chi_1^m : H^s(\Omega_{M}^{1/2}) \rightarrow H^s(\Omega_{M}^{1/2}) \quad \mbox{and} \quad \chi_1^m \circ R: H^s(\Omega_{M}^{1/2}) \rightarrow H^s(\Omega_{M}^{1/2})
\]
are compact for any $s$. 

Now suppose that $f \in H^s(\Omega_{M_1}^{1/2})$ has support contained in $M_1$. Then using the construction from the previous paragraph we find that
\be{equation}
f = \chi_1^m \circ \Upsilon^{-1} \circ (\chi_2^m)^2 \circ \mathcal{N}_\phi[f] - \chi_1^m \circ \Upsilon^{-1} \circ (\chi_2^m)^2 \circ A \circ \chi_1^m[f] - \chi_1^m \circ R[f].
\end{equation}
By the previous paragraph the operator $K: H^s(\Omega_{M}^{1/2}) \rightarrow H^s(\Omega_M^{1/2})$ defined by
\[
K = \chi_1^m \circ \Upsilon^{-1} \circ (\chi_2^m)^2 \circ A \circ \chi_1^m - \chi_1^m \circ R
\]
is compact. Using also the mapping property of $\chi_1^m \circ \Upsilon^{-1} \circ (\chi_2^m)^2$ we find that
\be{est1}
\|f \|_{H^s(\Omega^{1/2}_M)} \leq C \left (\| \mathcal{N}_\phi [f]\|_{H^{s+1}(\Omega^{1/2}_M)} + \| K[f] \|_{H^s(\Omega^{1/2}_M)} \right )
\ee
This holds for $f$ with support in $M_1$. For convenience we now introduce the notation
\[
H^s_0(\Omega^{1/2}_{M_1}) = \{ f \in H^s(\Omega^{1/2}_M) \ : \ \mathrm{supp}(f) \subset M_1\}
\]
which is a closed subspace of $H^s(\Omega^{1/2}_M)$. From \eqref{est1} we can establish using standard methods (see for example \cite{StefanovUhlmann04}) that 
\[
\mathrm{ker}(\mathcal{X}_\phi) \cap H^s_0(\Omega^{1/2}_{M_1})
\]
is finite dimensional. Also we can show that for any closed subspace $\mathcal{F}_1$ of $H^s_0(\Omega^{1/2}_{M_1})$ on which $\mathcal{X}_\phi$ is injective
\be{est2}
\|f \|_{H^s(\Omega^{1/2}_M)} \leq C \| \mathcal{N}_\phi [f]\|_{H^{s+1}(\Omega^{1/2}_M)} 
\ee
for all $f \in \mathcal{F}_1$ and any $s \in \mathbb{R}$.

Now suppose that $f \in \mathrm{Ker}(\mathcal{X}_\phi)$ acting on $L^2(\Omega_{M}^{1/2})$. Then since $f |_{\partial_\delta M} = 0$ we have that the support of $f$ is in $M_1$. On the other hand from \eqref{equation} we have
\[
f = -\chi_1^m \circ \Upsilon^{-1} \circ (\chi_2^m)^2 \circ A \circ \chi_1^m [f] - \chi_1^m \circ \mathcal{R}[f].
\]
from which we may conclude that $f \in H^{1/2}(\Omega_{M}^{1/2})$. Repeating this we obtain by a boot strapping argument that $f \in C_c^\infty(\Omega_{M^{int}}^{1/2})$. This proves the first statement of the theorem, and it only remains to prove \eqref{stab}.

Actually, half of \eqref{stab} has already been proven in Lemma \ref{Xcont}, and all that remains is to prove the stability estimate
\be{stabest}
\| f\|_{H^{-1/2}(\Omega_M^{1/2})} \leq C \| \mathcal{X}_\phi[f] \|_{L^2(\Omega_{\partial_- SM}^{1/2})}
\ee
for all $f \in \mathcal{F}$. For this we apply \eqref{est2} with $M$ and $M_1$ replaced by the extensions $\widetilde{M}$ and $\widetilde{M}_1$. Then $\widetilde{\mathcal{F}}_1 = \mathfrak{i}[\mathcal{F}]$ is a closed subspace of $H^{-1/2}_0(\Omega_{\widetilde{M}_1}^{1/2})$ on which $\widetilde{\mathcal{X}}_{\widetilde{\phi}}$ is injective since the extension-by-zero operator $\mathfrak{i}: H^{-1/2}(\Omega_M^{1/2}) \rightarrow H^{-1/2}(\Omega_{\widetilde{M}}^{1/2})$ is an isometric embedding for $s \leq 0$. Using this embedding property again we find that
\[
\| f \|_{H^{-1/2}(\Omega_M^{1/2})} = \| \mathfrak{i}[f] \|_{H^{-1/2}(\Omega_{\widetilde{M}^{1/2}})} \leq C \| \widetilde{\mathcal{N}}_{\widetilde{\phi}} \circ \mathfrak{i}[f]\|_{H^{1/2}(\Omega^{1/2}_{\widetilde{M}})}.
\]
Applying Lemma \ref{Xcont} on $\widetilde{M}$ to this last estimate we have
\[
\| f \|_{H^{-1/2}(\Omega_M^{1/2})} \leq C \| \widetilde{\mathcal{X}}_{\widetilde{\phi}}\circ \mathfrak{i}[f]\|_{L^2(\Omega_{\partial_- S\widetilde{M}}^{1/2})}.
\]
For the final step we use the fact that $R \circ \widetilde{\mathcal{N}}_{\widetilde{\phi}} \circ \mathfrak{i}[f] = \mathcal{N}_\phi[f]$ for $f$ supported in $M$ which we have also used in the proof of Lemma \ref{Ncont} and follows from equation \eqref{explicitN}. Using also the fact that $R$ and $\mathfrak{i}$ are adjoints we have
\[
\begin{split}
\| \widetilde{\mathcal{X}}_{\widetilde{\phi}}\circ \mathfrak{i}[f]\|^2_{L^2(\Omega_{\partial_- S\widetilde{M}}^{1/2})} & =\langle \widetilde{\mathcal{X}}_{\widetilde{\phi}}\circ \mathfrak{i}[f],  \widetilde{\mathcal{X}}_{\widetilde{\phi}}\circ \mathfrak{i}[f] \rangle_{L^2(\Omega_{\partial_- S\widetilde{M}}^{1/2})}  \\
& = \langle f, R \circ \widetilde{\mathcal{N}}_{\widetilde{\phi}} \circ \mathfrak{i}[f]  \rangle_{L^2(\Omega_{\partial_- S M}^{1/2})}\\
& = \langle f, \mathcal{N}_\phi[f]  \rangle_{L^2(\Omega_{\partial_- S M}^{1/2})} \\
& = \langle \mathcal{X}_\phi[f], \mathcal{X}_\phi[f]  \rangle_{L^2(\Omega_{\partial_- S M}^{1/2})} \\
& = \| \mathcal{X}_\phi[f] \|^2_{L^2(\Omega_{\partial_- S M}^{1/2})}.
\end{split}
\]
Thus \eqref{stabest} is proven which completes the proof of Theorem \ref{contthm}.
\end{proof}

\section{Conclusion}

While we have gone some way towards completing the microlocal analysis of the geodesic X-ray transform for nontrapping manifolds, a number of questions remain. Microlocal analysis of the two dimensional case is complete, and indeed in \cite{MoStUh} it is shown, in the two dimensional case, that when there are conjugate points $\mathcal{X}$ can actually cancel singularities. That is, there exist non-smooth distributions $f$ such that $\mathcal{X}[f]$ is smooth. This has the consequence that stable inversion is not possible between any Sobolev spaces, and so the inverse problem of recovering $f$ from $\mathcal{X}[f]$ in this case is severely ill-posed. We have proven that this does not occur in three dimensions or higher provided that Assumptions \ref{basicassume} and \ref{Extassume} are satisfied, and indeed that the problem is only mildly ill-posed in that case. When the assumptions are not satisfied more work is required to determine the degree of ill-posedness.

The failure of Assumption \ref{Extassume} may occur in at least two ways. Firstly, as pointed out in Remark \ref{singremark}, in dimension three and higher the hypothesis of Theorem \ref{mainthm} that $C_S = \emptyset$ fails for generic metrics, although at least in three dimensions this only happens at isolated points which have $D_4$ type singularities. Study of the normal operator $\mathcal{N}_\phi$ near such points is therefore required for a full understanding of the microlocal properties of $\mathcal{X}_\phi$ in three dimenions (which is likely the most interesting case for any application), and in particular understanding of whether the inversion is mildly or severely ill-posed.

Another way the Assumption \ref{Extassume} may fail is the additional requirement that $C_{A_1}$ be a canonical graph. It is at the moment unclear whether this is satisfied generically, if not whether it fails only at isolated points, and what precise impact it might have on the proof of stability estimates as shown in section \ref{stability:sec}. The canonical graph assumption is required for the continuity of $A_1$ between appropriate Sobolev spaces, but weaker versions of such continuity may still hold even when $C_{A_1}$ is not a canonical graph.

As mentioned in the introduction, the geodesic X-ray transform for tensor fields is also of interest and in fact arises naturally in travel time tomography. It is likely the method used in this paper could be extended to the tensor field case with some adjustments, and this is reserved for future work.

\bibliographystyle{abbrv}
\bibliography{Microlocalbib}

\end{document}